\newtheorem{theorem}{Theorem}[section]
\newtheorem{lemma}[theorem]{Lemma}
\newtheorem{proposition}[theorem]{Proposition}
\theoremstyle{definition}
\newtheorem{definition}[theorem]{Definition}
\newtheorem{remark}[theorem]{Remark}
\definecolor{A}{rgb}{.75,1,.75}
\numberwithin{equation}{section}
\begin{document}

\title[Affine cellularity of affine Yokonuma-Hecke algebras]{Affine cellularity of affine Yokonuma-Hecke algebras}
\author[Weideng Cui]{Weideng Cui}
\address{School of Mathematics, Shandong University, Jinan, Shandong 250100, P.R. China.}
\email{cwdeng@amss.ac.cn}

\begin{abstract}
We establish an explicit algebra isomorphism between the affine Yokonuma-Hecke algebra $\widehat{Y}_{r,n}(q)$ and a direct sum of matrix algebras with coefficients in tensor products of affine Hecke algebras of type $A.$ As an application of this result, we show that $\widehat{Y}_{r,n}(q)$ is affine cellular in the sense of Koenig and Xi, and further prove that it has finite global dimension when the parameter $q$ is not a root of the Poincar\'{e} polynomial. As another application, we also recover the modular representation theory of $\widehat{Y}_{r,n}(q)$ previously obtained in [CW].
\end{abstract}



\thanks{\emph{\emph{2010} Mathematics Subject Classification}. 20C08, 16E10.}
\thanks{\emph{Keywords.} Affine Yokonuma-Hecke algebras, affine cellular algebras, matrix algebras, global dimension.}

\maketitle
\medskip
\section{Introduction}
\subsection{}
Inspired by Lusztig's work on the structure of cells and the base ring of affine Hecke algebras, Koenig and Xi [KX2] recently defined the notion of affine cellularity to generalize the notion of cellular algebras [GL] to algebras of not necessarily finite dimension over a noetherian domain $k.$ Extended affine Hecke algebras of type $A$ and affine Temperley-Lieb algebras were proved to be affine cellular in [KX2]. Further examples of affine cellular algebras include affine Hecke algebras of rank two with generic parameters [GM], KLR algebras of finite type [KL], BLN algebras ([C1] and [N]) and affine $q$-Schur algebras [C2].

\subsection{}
Yokonuma-Hecke algebras of general types were first introduced in the sixties by Yokonuma [Yo]. In the late 1990s and early 2000s, a new presentation of the Yokonuma-Hecke algebra $Y_{r,n}(q)$ of type $A$ was given by Juyumaya and Kannan [Ju1, JuK], which has been widely used for studying this algebra since then.

In recent years, many people are interested in the algebra $Y_{r,n}(q)$ from different perspectives. Some people studied $Y_{r,n}(q)$ in order to construct its associated knot invariant; see [Ju2], [JuL] and [ChL] and so on. Others are particularly interested in the representation theory of $Y_{r,n}(q).$ Chlouveraki and Poulain d'Andecy [ChPA1] gave explicit formulas for all irreducible representations of $Y_{r,n}(q)$ over $\mathbb{C}(q)$ and obtained a semisimplicity criterion for it. Moreover, they defined a new kind of algebras, called affine Yokonuma-Hecke algebras and denoted by $\widehat{Y}_{r,n}(q).$ In their subsequent paper [ChPA2], they studied the representation theory of the affine Yokonuma-Hecke algebra $\widehat{Y}_{r,n}(q)$ and the cyclotomic Yokonuma-Hecke algebra $Y_{r,n}^{d}(q)$. In particular, they gave the classification of irreducible representations of $Y_{r,n}^{d}(q)$ in the generic semisimple case.

Besides, Jacon and Poulain d'Andecy [JPA] (see also [ER]) gave an explicit algebraic isomorphism between the Yokonuma-Hecke algebra $Y_{r,n}(q)$ and a direct sum of matrix algebras over tensor products of Iwahori-Hecke algebras of type $A,$ which is in fact a special case of the results by G. Lusztig [L6, Section 34]. This allows them to give a description of the modular representation theory of $Y_{r,n}(q)$ and a complete classification of all Markov traces for it. Recently, Chlouveraki and S\'{e}cherre [ChS] proved that the affine Yokonuma-Hecke algebra is a particular case of the pro-$p$-Iwahori-Hecke algebras.

In [CW], we have established an equivalence between a module category of $\widehat{Y}_{r,n}(q)$ (resp. $Y_{r,n}^{d}(q)$) and its suitable counterpart for a direct sum of tensor products of affine Hecke algebras of type $A$ (resp. cyclotomic Hecke algebras), which allows us to give the classification of simple modules of $\widehat{Y}_{r,n}(q)$ and $Y_{r,n}^{d}(q)$ over an algebraically closed field of characteristic $p$ such that $p$ does not divide $r.$

\subsection{}
Since the affine Hecke algebra of type $A$ is affine cellular, it is natural to try to show that the affine Yokonuma-Hecke algebra $\widehat{Y}_{r,n}(q)$ is also affine cellular. In this paper, we will prove this fact by constructing an explicit algebra isomorphism between the affine Yokonuma-Hecke algebra $\widehat{Y}_{r,n}(q)$ and a direct sum of matrix algebras with coefficients in tensor products of various affine Hecke algebras of type $A$. As another application, we also recover the modular representation theory of $\widehat{Y}_{r,n}(q)$ previously obtained in [CW].

This paper is organized as follows. In Section 2, we recall Koenig and Xi's results on affine cellular algebras, and then review the axiomatic approach to studying them presented in [C2]. In Section 3, we give another presentation of the affine Yokonuma-Hecke algebra $\widehat{Y}_{r,n}(q)$. In Section 4, inspired by the work of Lusztig in [L6], we give the construction of an algebra $\widehat{E}_{r,n}$, which is in fact a direct sum of matrix algebras with coefficients in tensor products of extended affine Hecke algebras of type $A$. Moreover, we prove that $\widehat{E}_{r,n}$ satisfies these axiomatic properties $P_{1}$, $P_{2}$, $P_{3}$ and $P_{4}$ described in [C2, Section 3]. In Section 5, we follow Lusztig's approach in [L6] to establish an explicit algebra isomorphism between $\widehat{Y}_{r,n}(q)$ and $\widehat{E}_{r,n}$. Thus, we prove that $\widehat{Y}_{r,n}(q)$ is an affine cellular algebra. We further prove that it has finite global dimension when the parameter $q$ is not a root of the Poincar\'{e} polynomial.

{\it Additional remark}: Four months after the first version of this paper was made available on arXiv (arXiv:1510.02647), Poulain d'Andecy put up his preprint [PA] on arXiv to give another proof of the isomorphism theorem for affine Yokonuma-Hecke algebras.

\section{An axiomatic approach to affine cellular algebras}
\subsection{Affine cellular algebras}

In this subsection, we recall Koenig and Xi's ([KX2]) definition and results on affine cellular algebras. Throughout, we assume that $k$ is a Noetherian domain.

For two $k$-modules $V$ and $W,$ let $\tau$ be the switch map: $V\otimes W\rightarrow W\otimes V$ defined by $\tau(v\otimes w)=w\otimes v$ for $v\in V$ and $w\in W.$ A $k$-linear anti-automorphism $i$ of a $k$-algebra $A$ which satisfies $i^{2}=id_{A}$ will be called a $k$-\emph{involution} on $A.$ A commutative $k$-algebra $B$ is called an \emph{affine} $k$-\emph{algebra} if it is a quotient of a polynomial ring $k[x_{1},\ldots, x_{r}]$ in finitely many variables $x_{1},\ldots, x_{r}$ by some ideal $I.$

\begin{definition} (See [KX2, Definition 2.1].) Let $A$ be a unitary $k$-algebra with a $k$-involution $i$. A two-sided ideal $J$ in $A$ is called an \emph{affine cell ideal} if and only if the following data are given and the following conditions are satisfied:\vskip2mm
$(1)$ $i(J)=J.$

$(2)$ There exist a free $k$-module $V$ of finite rank and an affine $k$-algebra $B$ with a $k$-involution $\sigma$ such that $\Delta :=V\otimes_{k} B$ is an $A$-$B$-bimodule, where the right $B$-module structure is induced by the right regular $B$-module $B_{B}$.

$(3)$ There is an $A$-$A$-bimodule isomorphism $\alpha :J\rightarrow \Delta\otimes_{B}\Delta',$ where $\Delta':=B\otimes_{k}V$ is a $B$-$A$-bimodule with the left $B$-module induced by the left regular $B$-module ${}_{B}B$ and with the right $A$-module structure defined by $(b\otimes v)a :=\tau(i(a)(v\otimes b))$ for $a\in A$, $b\in B$ and $v\in V$, such that the following diagram is commutative:

\[\begin{CD}
J   @>\alpha>>\Delta\otimes_{B}\Delta'\\
@ViVV                  @VVu\otimes b\otimes_{B}b'\otimes v\mapsto v\otimes \sigma(b')\otimes_{B}\sigma(b)\otimes uV\\
J         @>\alpha>>   \Delta\otimes_{B}\Delta'.
\end{CD}\]~

The algebra $A$ together with the $k$-involution $i$ is called \emph{affine cellular} if and only if there is a $k$-module decomposition $A=J_{1}'\oplus J_{2}'\oplus\cdots \oplus J_{n}'$ (for some $n$) with $i(J_{l}')=J_{l}'$ for $1\leq l\leq n,$ such that, setting $J_{l} :=\bigoplus_{h=1}^{l}J_{h}',$ we have a chain of two-sided ideals of $A$: $$0=J_{0}\subset J_{1}\subset J_{2}\subset\cdots\subset J_{n}=A$$ so that each subquotient $J_{l}'=J_{l}/J_{l-1}$ ($1\leq l\leq n$) is an affine cell ideal of $A/J_{l-1}$ (with respect to the involution induced by $i$ on the quotient).

We call this chain an \emph{affine cell chain} for the affine cellular algebra $A.$ The subquotients of an affine cell chain will be called \emph{layers} of $A.$
\end{definition}

Given a free $k$-module $V$ of finite rank $n,$ an affine $k$-algebra $B$ and a $k$-bilinear form $\rho : V\otimes_{k} V\rightarrow B,$ we define an associative algebra $\mathbf{A}(V,B,\rho)$ as follows: $\mathbf{A}(V,B,\rho) :=V\otimes_{k} B\otimes_{k}V$ as a $k$-module, and the multiplication on $\mathbf{A}(V,B,\rho)$ is defined by $$(u_1\otimes b_1\otimes v_1)(u_2\otimes b_2\otimes v_2) :=u_1\otimes b_1\rho(v_1,u_2)b_2\otimes v_2$$ for all $u_1, u_2, v_1, v_2\in V$ and $b_1, b_2\in B.$ Moreover, if $B$ admits a $k$-involution $\sigma$ satisfying $\sigma\rho(v_1, v_2)=\rho(v_2, v_1),$ then $\mathbf{A}(V,B,\rho)$ admits a $k$-involution $\varrho$ which sends $u\otimes b \otimes v$ to $v\otimes \sigma(b)\otimes u$ for all $u,v\in V$ and $b\in B.$

An equivalent description of this construction is as follows: Given $V,$ $B,$ $\rho$ as above, we define the \emph{generalized matrix algebra} $(M_{n}(B), \rho)$ over $B$ with respect to $\rho$ in the following way. It equals the ordinary matrix algebra $M_{n}(B)$ of $n\times n$ matrices over $B$ as a $k$-space, but the multiplication is given by $$\widetilde{x}\cdot \widetilde{y}=x\Psi y$$ for all $x, y \in M_{n}(B),$ where $\widetilde{x}$ and $\widetilde{y}$ are elements of $(M_{n}(B), \rho)$ corresponding to $x$ and $y,$ respectively, and $\Psi$ is the matrix describing the bilinear form $\rho$ with respect to some basis of $V.$ Moreover, if $B$ admits a $k$-involution $\sigma$ satisfying $\sigma\rho(v_1, v_2)=\rho(v_2, v_1),$ then $(M_{n}(B), \rho)$ admits a $k$-involution $\kappa$ which sends $E_{jl}(b)$ to $E_{lj}(\sigma(b)),$ where $E_{jl}(b)$ denotes a square matrix whose $(j,l)$-entry is $b\in B$ and all the other entries are zero.

From the above discussion, we can easily get the following proposition about the description of affine cell ideals.
\begin{proposition}\label{kx2-proposition-affine-cell}
{\rm (See [KX2, Proposition 2.2].)} Let $k$ be a Noetherian domain, $A$ a unitary $k$-algebra with a $k$-involution $i$. A two-sided ideal $J$ in $A$ is an affine cell ideal if and only if $i(J)=J,$ and $J$ is isomorphic to some generalized matrix algebra $(M_{n}(B), \rho)$ for some affine $k$-algebra $B$ with a $k$-involution $\sigma,$ a free $k$-module $V$ of finite rank $n$ and a $k$-bilinear form $\rho :V\otimes_{k} V\rightarrow B,$ such that under this isomorphism, if a basis element $a$ of $J$ corresponds to $E_{jl}(b')$ for some $b'\in B,$ then $i(a)$ corresponds to $E_{lj}(\sigma(b'))$.
\end{proposition}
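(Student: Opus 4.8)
The plan is to derive the proposition directly from the discussion that immediately precedes it, making explicit the bridge between Koenig--Xi's Definition 2.1 and the generalized matrix algebra construction $\mathbf{A}(V,B,\rho)\cong(M_n(B),\rho)$. First I would observe that the two constructions attached to the data $(V,B,\rho)$ are one and the same: if $\{v_1,\dots,v_n\}$ is a $k$-basis of $V$ and $\Psi=(\psi_{jl})$ with $\psi_{jl}=\rho(v_j,v_l)$, then the $k$-linear map sending $v_j\otimes b\otimes v_l$ to $E_{jl}(b)$ is a $k$-module isomorphism $\mathbf{A}(V,B,\rho)\xrightarrow{\sim}M_n(B)$, and the multiplication rule $(u_1\otimes b_1\otimes v_1)(u_2\otimes b_2\otimes v_2)=u_1\otimes b_1\rho(v_1,u_2)b_2\otimes v_2$ transports exactly to $\widetilde{x}\cdot\widetilde{y}=x\Psi y$. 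Under this identification the involution $\varrho$ on $\mathbf{A}(V,B,\rho)$ corresponds to the involution $\kappa$ on $(M_n(B),\rho)$, provided $\sigma\rho(v_1,v_2)=\rho(v_2,v_1)$; this is a formal check.

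Next I would prove the forward implication. Suppose $J$ is an affine cell ideal, with data $V$, $B$, $\sigma$, $\Delta=V\otimes_k B$, $\Delta'=B\otimes_k V$, and the bimodule isomorphism $\alpha\colon J\xrightarrow{\sim}\Delta\otimes_B\Delta'$. The key step is to identify $\Delta\otimes_B\Delta'$ with $\mathbf{A}(V,B,\rho)$ for a suitable $\rho$. As a $k$-module $\Delta\otimes_B\Delta'=(V\otimes_k B)\otimes_B(B\otimes_k V)\cong V\otimes_k B\otimes_k V$, so the only thing to pin down is the algebra structure on $\Delta\otimes_B\Delta'$ coming from the $A$-$A$-bimodule structure on $J$. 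The left $A$-action on $J\cong\Delta\otimes_B\Delta'$ factors through the left $A$-action on $\Delta$; the right $A$-action factors through the right $A$-action on $\Delta'$, which by definition is $(b\otimes v)a=\tau(i(a)(v\otimes b))$. Chasing how the multiplication in $A$ restricts to $J$ through $\alpha$, one finds that there is a unique $k$-bilinear form $\rho\colon V\otimes_k V\to B$ describing the resulting multiplication, i.e. $\alpha$ becomes an algebra isomorphism $J\cong\mathbf{A}(V,B,\rho)\cong(M_n(B),\rho)$; the commutativity of the diagram in Definition 2.1$(3)$, which says $i$ is intertwined with $u\otimes b\otimes_B b'\otimes v\mapsto v\otimes\sigma(b')\otimes_B\sigma(b)\otimes u$, translates under $\alpha$ precisely into the statement that $i$ corresponds to $\kappa$, hence to the asserted rule $E_{jl}(b')\mapsto E_{lj}(\sigma(b'))$; and the compatibility $\sigma\rho(v_1,v_2)=\rho(v_2,v_1)$ is forced by $\varrho$ (equivalently $\kappa$) being an involution. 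For the converse, given an isomorphism $J\cong(M_n(B),\rho)\cong\mathbf{A}(V,B,\rho)=V\otimes_k B\otimes_k V$ with the stated compatibility of $i$ with $\kappa$, I would set $\Delta:=V\otimes_k B$ and $\Delta':=B\otimes_k V$ with the bimodule structures prescribed in Definition 2.1, check that $\Delta\otimes_B\Delta'\cong V\otimes_k B\otimes_k V$ as $A$-$A$-bimodules, let $\alpha$ be the composite $J\cong\mathbf{A}(V,B,\rho)\cong\Delta\otimes_B\Delta'$, and verify $(1)$ and the commutativity of the diagram in $(3)$; the latter is exactly the compatibility of $i$ with $\kappa$ read backwards.

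The step I expect to be the main obstacle is the precise identification, in the forward direction, of the algebra structure that $J$ inherits as an ideal of $A$ with the algebra structure of $\mathbf{A}(V,B,\rho)$: one must check that transporting the multiplication $A\times A\to A$ restricted to $J\times J$ through $\alpha$ really does yield a product of the form $(u_1\otimes b_1\otimes v_1)(u_2\otimes b_2\otimes v_2)=u_1\otimes b_1\rho(v_1,u_2)b_2\otimes v_2$ for a well-defined bilinear $\rho$ valued in $B$ (in particular that the "middle" pairing lands in $B$ and does not see the $V$-factors except through $\rho$), using the explicit description of the right $A$-module structure on $\Delta'$ via $i$ and $\tau$. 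Once this bookkeeping is done, everything else — the equivalence of the two matrix-algebra constructions, the translation of the involution diagram into the rule $E_{jl}(b')\mapsto E_{lj}(\sigma(b'))$, and the reverse implication — is routine, and in fact this is essentially the content of [KX2, Proposition 2.2], so I would cite that for the verifications that are purely formal and only spell out the dictionary needed in the sequel.
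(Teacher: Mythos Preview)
Your proposal is correct and aligns with the paper's treatment: the paper does not give a self-contained proof of this proposition at all, but simply cites [KX2, Proposition 2.2] and remarks that it follows ``from the above discussion,'' i.e., from the explicit identification of $\mathbf{A}(V,B,\rho)$ with $(M_n(B),\rho)$ and of the involution $\varrho$ with $\kappa$. Your sketch is precisely a fleshing-out of that dictionary, so there is nothing to compare; if anything, you have written out more than the paper does, and your identification of the only nontrivial bookkeeping step (extracting a well-defined $B$-valued bilinear form $\rho$ from the transported multiplication on $\Delta\otimes_B\Delta'$) is accurate.
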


It has been claimed that the tensor product of two cellular algebras is also cellular ([KX1, p. 716]). In a similar way, we wil show that the tensor product of two affine cellular algebras is also affine cellular in the next lemma.
\begin{lemma}\label{lemma-tensor-products-aff}
If both algebras $(A, i)$ and $(B, j)$ are affine cellular over $k$, then the tensor product $A\otimes_{k} B$ is also affine cellular with respect to the $k$-involution $i\otimes j.$
\end{lemma}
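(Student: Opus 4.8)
The plan is to merge the affine cell chains of $A$ and $B$ into a lexicographically ordered double filtration of $A\otimes_k B$ whose layers are the pairwise tensor products of the layers of the two given chains, and then to recognise each such tensor product as a generalised matrix algebra over an affine $k$-algebra, so that Proposition~\ref{kx2-proposition-affine-cell} applies.

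The first step is a \emph{building block} computation: the tensor product of two generalised matrix algebras is again one of the same type. Let $V,W$ be free $k$-modules of finite rank, $B',C'$ affine $k$-algebras with $k$-involutions $\sigma,\tau$, and $\rho\colon V\otimes_k V\to B'$, $\mu\colon W\otimes_k W\to C'$ $k$-bilinear forms with $\sigma\rho(v_1,v_2)=\rho(v_2,v_1)$ and $\tau\mu(w_1,w_2)=\mu(w_2,w_1)$. Then the $k$-linear identification $(u\otimes b\otimes v)\otimes(x\otimes c\otimes y)\mapsto(u\otimes x)\otimes(b\otimes c)\otimes(v\otimes y)$ is an algebra isomorphism
\[
\mathbf{A}(V,B',\rho)\otimes_k\mathbf{A}(W,C',\mu)\ \cong\ \mathbf{A}\big(V\otimes_k W,\ B'\otimes_k C',\ \rho\boxtimes\mu\big),
\]
where $(\rho\boxtimes\mu)(v\otimes w,\,v'\otimes w'):=\rho(v,v')\otimes\mu(w,w')$, as one sees by comparing the two multiplication rules on simple tensors. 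Here $B'\otimes_k C'$ is again affine (a quotient of $k[x_1,\dots,x_r]\otimes_k k[y_1,\dots,y_s]=k[x_1,\dots,x_r,y_1,\dots,y_s]$), the map $\sigma\otimes\tau$ is a $k$-involution on it, and $(\sigma\otimes\tau)(\rho\boxtimes\mu)(\xi,\eta)=(\rho\boxtimes\mu)(\eta,\xi)$; on the matrix side, with a basis of $V\otimes_k W$ indexed by pairs, the tensor product of the two involutions $\kappa$ corresponds to the involution $\kappa$ of the big generalised matrix algebra (it sends $E_{(i,k),(j,l)}(b\otimes c)$ to $E_{(j,l),(i,k)}(\sigma(b)\otimes\tau(c))$). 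In particular, if $J$ is an affine cell ideal of a $k$-algebra $A_1$ and $L$ is one of $A_2$, then $J\otimes_k L$ is an $(i_1\otimes i_2)$-stable two-sided ideal of $A_1\otimes_k A_2$ which, by Proposition~\ref{kx2-proposition-affine-cell}, is an affine cell ideal of $A_1\otimes_k A_2$.

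Now take affine cell chains $0=J_0\subset\cdots\subset J_p=A$ and $0=K_0\subset\cdots\subset K_q=B$ with layers $J_a':=J_a/J_{a-1}$, $K_b':=K_b/K_{b-1}$ and $k$-module decompositions $A=\bigoplus_a J_a'$, $B=\bigoplus_b K_b'$. For $1\le a\le p$, $0\le b\le q$ set $N_{a,b}:=J_{a-1}\otimes_k B+J_a\otimes_k K_b\subseteq A\otimes_k B$; these are two-sided ideals. Since each $J_c$, resp. $K_d$, is a $k$-module direct summand of $A$, resp. $B$, distributivity of $\otimes_k$ over direct sums gives $N_{a,b}=\bigoplus_{(c,d)\le_{\mathrm{lex}}(a,b)}J_c'\otimes_k K_d'$, so reading the $N_{a,b}$ in lexicographic order of the pairs (noting $N_{a,q}=N_{a+1,0}$) produces an increasing chain of two-sided ideals from $N_{1,0}=0$ to $N_{p,q}=A\otimes_k B$ that refines the decomposition $A\otimes_k B=\bigoplus_{a,b}J_a'\otimes_k K_b'$; each $N_{a,b}$ is $(i\otimes j)$-stable because each $J_c$ is $i$-stable and each $K_d$ is $j$-stable. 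Passing first to the quotient by $J_{a-1}\otimes_k B$ (a $k$-split subobject of $A\otimes_k B$) and then using that $0\to K_{b-1}\to K_b\to K_b'\to0$ is $k$-split, one identifies the subquotient $N_{a,b}/N_{a,b-1}$, as a $k$-algebra with involution, with $J_a'\otimes_k K_b'$, where $J_a'$ carries its structure of affine cell ideal of $A/J_{a-1}$ and $K_b'$ that of affine cell ideal of $B/K_{b-1}$.

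It then remains to check that $N_{a,b}/N_{a,b-1}$ is an affine cell ideal of $(A\otimes_k B)/N_{a,b-1}$: it is a two-sided $(i\otimes j)$-stable ideal there, and by the building block step (applied to $A_1=A/J_{a-1}$ and $A_2=B/K_{b-1}$) it is isomorphic, as a $k$-algebra with involution, to a generalised matrix algebra over the affine $k$-algebra $B_a\otimes_k C_b$ in a way compatible with the condition of Proposition~\ref{kx2-proposition-affine-cell}. Hence every layer is an affine cell ideal of the relevant quotient, and $A\otimes_k B$ is affine cellular with $k$-involution $i\otimes j$. The part I expect to require the most care is the building block computation — verifying that the tensor product of two generalised matrix algebras is again a generalised matrix algebra over the (still affine) algebra $B'\otimes_k C'$ and that all three involutions match up; everything else is bookkeeping, the one delicate point being that, $k$ being merely a Noetherian domain, the $k$-module splittings built into the definition of affine cellularity must be used to keep the relevant tensor products left exact when identifying the subquotients.
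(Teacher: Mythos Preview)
Your proposal is correct and follows essentially the same route as the paper: a lexicographic double filtration with layers $J_a'\otimes_k K_b'$, identified via Proposition~\ref{kx2-proposition-affine-cell} as generalised matrix algebras over the affine $k$-algebras $B_a\otimes_k C_b$. Your ideals $N_{a,b}=J_{a-1}\otimes_k B+J_a\otimes_k K_b$ coincide (up to an index shift) with the paper's $H_{am+b}$, and your building-block isomorphism $\mathbf{A}(V,B',\rho)\otimes_k\mathbf{A}(W,C',\mu)\cong\mathbf{A}(V\otimes_k W,B'\otimes_k C',\rho\boxtimes\mu)$ is exactly the content the paper records as $J_r'\otimes_k K_s'\cong(M_{n_1n_2}(B_1\otimes_k B_2),\rho_1\otimes\rho_2)$; you simply supply more of the verification (the involution compatibility and the use of $k$-splitness for the subquotient identification) that the paper explicitly leaves as an exercise.
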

\begin{proof}
As in Definition 2.1, we assume that there is a $k$-module decomposition $A=J_{1}'\oplus J_{2}'\oplus\cdots \oplus J_{n}'$ (for some $n$) (resp. $B=K_{1}'\oplus K_{2}'\oplus\cdots \oplus K_{m}'$ (for some $m$)) with $i(J_{r}')=J_{r}'$ for $1\leq r\leq n$ (resp. $j(K_{s}')=K_{s}'$ for $1\leq s\leq m$), such that, setting $J_{r} :=\bigoplus_{a=1}^{r}J_{a}'$ (resp. $K_{s} :=\bigoplus_{b=1}^{s}K_{b}'$), we have a chain of two-sided ideals of $A$ (resp. $B$): $$0=J_{0}\subset J_{1}\subset J_{2}\subset\cdots\subset J_{n}=A \text{ (resp. } 0=K_{0}\subset K_{1}\subset K_{2}\subset\cdots\subset K_{m}=B)$$ so that each $J_{r}'=J_{r}/J_{r-1}$ ($1\leq r\leq n$) (resp. $K_{s}'=K_{s}/K_{s-1}$ ($1\leq s\leq m$)) is an affine cell ideal of $A/J_{r-1}$ (resp. $B/K_{s-1}$).

We construct the following two-sided ideals of $A\otimes_{k} B$:
\begin{align*}
H_{am+b} :=J_{1}'\otimes_{k} B\oplus\cdots \oplus J_{a}'\otimes_{k} B\oplus J_{a+1}'\otimes_{k} K_{b}\quad \text{ for }0\leq a\leq n-1\text{ and }1\leq b\leq m.
\end{align*}

We leave it as an exercise to use Proposition \ref{kx2-proposition-affine-cell} to verify that the following chain of two-sided ideals of $A\otimes_{k} B$:
\begin{equation*}
0=H_{0}\subset H_{1}\subset H_{2}\subset\cdots\subset H_{m}\subset H_{m+1}\subset\cdots\subset H_{(n-1)m+1}\subset\cdots\subset H_{nm}=A\otimes_{k} B
\end{equation*}
is an affine cell chain of $A\otimes_{k} B$ with respect to the $k$-involution $i\otimes j,$ and that the set $\{J_{r}'\otimes_{k} K_{s}'\:|\:1\leq r\leq n\text{ and }1\leq s\leq m\}$ forms a complete set of layers in the affine cell chain. In fact, if each layer $J_{r}'\cong (M_{n_{1}}(B_1), \rho_1)$ for some affine $k$-algebra $B_1$, a free $k$-module $V_1$ of rank $n_1$ and a $k$-bilinear form $\rho_1 :V_1\otimes_{k} V_1\rightarrow B_1,$ and $K_{s}'\cong (M_{n_{2}}(B_2), \rho_2)$ for some affine $k$-algebra $B_2$, a free $k$-module $V_2$ of rank $n_2$ and a $k$-bilinear form $\rho_2 :V_2\otimes_{k} V_2\rightarrow B_2,$ then we have $J_{r}'\otimes_{k} K_{s}'\cong (M_{n_{1}n_{2}}(B_1\otimes_{k} B_2), \rho_1\otimes \rho_2),$ where $\rho_1\otimes \rho_2 :(V_1\otimes_{k} V_2)\otimes_{k} (V_1\otimes_{k} V_2)\rightarrow B_1\otimes_{k} B_2$ is given by $\rho_1\otimes \rho_2(v_1\otimes v_2, u_1\otimes u_2)=\rho_1(v_1, u_1)\otimes \rho_2(v_2, u_2).$

Thus, $A\otimes_{k} B$ together with the $k$-involution $i\otimes j$ is an affine cellular algebra.
\end{proof}

We will also use the following lemma.
\begin{lemma}\label{affine-cellular-algebra-field-extension}
{\rm (See [KX2, Lemma 2.4].)} Suppose that $K$ is another Noetherian domain and $\phi :k\rightarrow K$ is a homomorphism of rings with identity. If $A$ is an affine cellular $k$-algebra with an involution $i,$ then $K\otimes_{k} A$ is an affine cellular $K$-algebra with respect to the involution id$_{K}\otimes i.$
\end{lemma}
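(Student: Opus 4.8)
The plan is to base change the given affine cell chain of $A$ along $\phi$, carrying the explicit generalized matrix algebra structure through the tensor product. Fix a $k$-module decomposition $A=J_{1}'\oplus\cdots\oplus J_{n}'$ as in Definition 2.1, with $i(J_{l}')=J_{l}'$ and $J_{l}:=\bigoplus_{h=1}^{l}J_{h}'$ a chain of two-sided ideals whose subquotients are affine cell ideals. Applying $K\otimes_{k}-$ yields a $K$-module decomposition $K\otimes_{k}A=\bigoplus_{l=1}^{n}(K\otimes_{k}J_{l}')$ which is preserved by $\mathrm{id}_{K}\otimes i$. The point that makes this work without any flatness assumption on $\phi$ is that, by construction, each inclusion $J_{l}\hookrightarrow A$ is split as a map of $k$-modules; hence $K\otimes_{k}J_{l}\hookrightarrow K\otimes_{k}A$ is a split injection of $K$-modules, and setting $\widetilde{J}_{l}:=K\otimes_{k}J_{l}$ we obtain a genuine chain $0=\widetilde{J}_{0}\subset\widetilde{J}_{1}\subset\cdots\subset\widetilde{J}_{n}=K\otimes_{k}A$ of $K$-submodules, with $\widetilde{J}_{l}/\widetilde{J}_{l-1}\cong K\otimes_{k}J_{l}'$ and $\mathrm{id}_{K}\otimes i$ stabilizing each $K\otimes_{k}J_{l}'$.

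First I would check that each $\widetilde{J}_{l}$ is a two-sided ideal of $K\otimes_{k}A$; this follows by base changing the restricted multiplication maps $A\otimes_{k}J_{l}\to J_{l}$ and $J_{l}\otimes_{k}A\to J_{l}$. Right exactness of $K\otimes_{k}-$, together with the splitting just mentioned, gives $K$-algebra isomorphisms $(K\otimes_{k}A)/\widetilde{J}_{l-1}\cong K\otimes_{k}(A/J_{l-1})$ compatible with the induced involutions, under which $\widetilde{J}_{l}/\widetilde{J}_{l-1}$ corresponds to the ideal $K\otimes_{k}J_{l}'$ of $K\otimes_{k}(A/J_{l-1})$. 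So the whole statement reduces to showing: if $J_{l}'$ is an affine cell ideal of $A/J_{l-1}$, then $K\otimes_{k}J_{l}'$ is an affine cell ideal of $K\otimes_{k}(A/J_{l-1})$.

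For this last step I would invoke Proposition \ref{kx2-proposition-affine-cell}: write $J_{l}'\cong(M_{n_{l}}(B_{l}),\rho_{l})$ with $B_{l}=k[x_{1},\ldots,x_{r_{l}}]/I_{l}$ an affine $k$-algebra carrying a $k$-involution $\sigma_{l}$, the multiplication twisted by the fixed matrix $\Psi_{l}\in M_{n_{l}}(B_{l})$ of $\rho_{l}$, and with $i$ acting by $E_{jt}(b)\mapsto E_{tj}(\sigma_{l}(b))$. Then $B_{l}\otimes_{k}K\cong K[x_{1},\ldots,x_{r_{l}}]/(I_{l})$ is an affine $K$-algebra with $K$-involution $\sigma_{l}\otimes\mathrm{id}_{K}$; the free $K$-module $V_{l}\otimes_{k}K$ has rank $n_{l}$; and $\rho_{l}\otimes\mathrm{id}_{K}$ is a $K$-bilinear form satisfying the required compatibility with $\sigma_{l}\otimes\mathrm{id}_{K}$. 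Since the twisting matrix base changes to $\Psi_{l}\otimes 1$, one obtains a $K$-algebra isomorphism $(M_{n_{l}}(B_{l}),\rho_{l})\otimes_{k}K\cong(M_{n_{l}}(B_{l}\otimes_{k}K),\rho_{l}\otimes\mathrm{id}_{K})$ under which $\mathrm{id}_{K}\otimes i$ becomes $E_{jt}(b\otimes\lambda)\mapsto E_{tj}(\sigma_{l}(b)\otimes\lambda)$. Proposition \ref{kx2-proposition-affine-cell} then identifies $K\otimes_{k}J_{l}'$ as an affine cell ideal of $K\otimes_{k}(A/J_{l-1})$, and we conclude that $K\otimes_{k}A$ is affine cellular with respect to $\mathrm{id}_{K}\otimes i$.

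The only genuine subtlety is that $\phi$ need not be flat, which could in principle destroy exactness when base changing the cell chain; this is precisely why the $k$-module splitting of each $J_{l}\hookrightarrow A$ built into Definition 2.1 is essential. The remaining verifications — that the twisted multiplication and the involution of a generalized matrix algebra commute with $K\otimes_{k}-$ — are routine, amounting to carrying the fixed matrix $\Psi_{l}$ through the ring map $B_{l}\to B_{l}\otimes_{k}K$, and noting that $K$ being a Noetherian domain ensures $K[x_{1},\ldots,x_{r_{l}}]$ is Noetherian so that the relevant quotients are affine $K$-algebras of the required kind.
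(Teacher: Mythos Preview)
Your argument is correct and follows the natural line: base change the direct sum decomposition, use the built-in $k$-module splitting to avoid any flatness hypothesis, and then verify via Proposition~\ref{kx2-proposition-affine-cell} that each layer remains a generalized matrix algebra over the affine $K$-algebra $B_{l}\otimes_{k}K$. The only point worth tightening is the presentation of $B_{l}\otimes_{k}K$: rather than writing it as $K[x_{1},\ldots,x_{r_{l}}]/(I_{l})$, it is cleaner to say that right exactness of $K\otimes_{k}-$ applied to $I_{l}\hookrightarrow k[x_{1},\ldots,x_{r_{l}}]\twoheadrightarrow B_{l}$ exhibits $B_{l}\otimes_{k}K$ as a quotient of $K[x_{1},\ldots,x_{r_{l}}]$, which is all the definition of an affine $K$-algebra requires.

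As for comparison with the paper: the paper does not prove this lemma at all. It is stated with the attribution ``(See [KX2, Lemma 2.4])'' and used as a black box, so there is no argument in the paper to compare against. Your proof is essentially the standard one and is presumably close to what Koenig and Xi do in the cited reference.
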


The following theorem plays an important role in investigating homological properties of affine cellular algebras.

\begin{theorem}\label{affine-cellua-YH-alg}
{\rm (See [KX2, Theorem 4.4].)} Let $A$ be an affine cellular algebra with an affine cell chain $0=J_{0}\subset J_{1}\subset J_{2}\subset\cdots\subset J_{n}=A$ such that $J_{l}/J_{l-1}\cong (M_{n_{l}}(B_{l}), \rho_{l})$ as in Proposition \ref{kx2-proposition-affine-cell}. Here we call the algebra $\bigoplus_{l=1}^{n}M_{n_{l}}(B_{l})$ the asymptotic algebra of $A.$

Suppose that each $B_{l}$ satisfies $\mathrm{rad}(B_{l})=0.$ Moreover, suppose that each layer $J_{l}/J_{l-1}$ is idempotent and contains a non-zero idempotent element in $A/J_{l-1}.$ Then we have\vskip1mm

$(1)$ The parameter set of simple $A$-modules equals the parameter set of simple modules of the asymptotic algebra, that is, a finite union of affine spaces $($one for each $B_{l}$$)$.

$(2)$ The unbounded derived module category $D(A$-$\mathrm{Mod})$ of $A$ admits a stratification, that is, an iterated recollement whose strata are the derived module categories of the various affine $k$-algebras $B_{l}.$

$(3)$ The global dimension $\mathrm{gldim}(A)$ is finite if and only if $\mathrm{gldim}(B_{l})$ is finite for all $l.$
\end{theorem}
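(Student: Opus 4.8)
The plan is to argue by induction on the length $n$ of the affine cell chain, at each step peeling off the bottom cell ideal $J_{1}\cong (M_{n_{1}}(B_{1}),\rho_{1})$ and relating $D(A\text{-}\mathrm{Mod})$ to $D(A/J_{1}\text{-}\mathrm{Mod})$ and to a corner algebra via the recollement of derived categories attached to an idempotent. Let $e\in A$ be the nonzero idempotent lying in $J_{1}$ supplied by the hypotheses, and recall that $J_{1}$ is assumed idempotent.

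The first, and main, step is to show that the vanishing of $\mathrm{rad}(B_{1})$ forces $J_{1}$ to be a \emph{stratifying ideal}: namely $J_{1}=AeA$, the multiplication map $Ae\otimes_{eAe}eA\to AeA$ is an isomorphism, and $\mathrm{Tor}_{i}^{eAe}(Ae,eA)=0$ for all $i>0$; moreover the corner algebra $eAe=eJ_{1}e$ is Morita equivalent to $B_{1}$ (in the normalized situation one arranges $e$ to be a diagonal matrix unit of $(M_{n_{1}}(B_{1}),\rho_{1})$, so that $eAe\cong B_{1}$). The reducedness of $B_{1}$ is exactly what is needed to make the relevant bimodule flat on both sides and kill the higher $\mathrm{Tor}$ groups. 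I expect this verification --- which is essentially the content of [KX2, Theorem 4.4] --- to be the main obstacle; once it is available, the rest is formal homological algebra.

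Granting that $J_{1}=AeA$ is a stratifying ideal, the general theory of recollements attached to stratifying ideals (in the spirit of Cline--Parshall--Scott) produces a recollement
\[
D(A/J_{1}\text{-}\mathrm{Mod})\ \longrightarrow\ D(A\text{-}\mathrm{Mod})\ \longrightarrow\ D(eAe\text{-}\mathrm{Mod})\ \simeq\ D(B_{1}\text{-}\mathrm{Mod}).
\]
Now $A/J_{1}$ is again affine cellular with an affine cell chain of length $n-1$ whose layers are the $(M_{n_{l}}(B_{l}),\rho_{l})$ for $l\geq 2$, and the hypotheses of the theorem are inherited; so by induction $D(A/J_{1}\text{-}\mathrm{Mod})$ carries an iterated recollement with strata $D(B_{2}\text{-}\mathrm{Mod}),\dots,D(B_{n}\text{-}\mathrm{Mod})$. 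Splicing this into the recollement above yields the iterated recollement of $D(A\text{-}\mathrm{Mod})$ with strata $D(B_{1}\text{-}\mathrm{Mod}),\dots,D(B_{n}\text{-}\mathrm{Mod})$, which is assertion $(2)$. For $(1)$, a recollement identifies the simple $A$-modules with the disjoint union of the simple $A/J_{1}$-modules (those annihilated by $J_{1}$) and the simple modules of $eAe$, equivalently of $B_{1}$ by Morita equivalence (those on which $J_{1}$ acts nontrivially); iterating down the chain, the simple $A$-modules are parametrized by $\bigsqcup_{l=1}^{n}\mathrm{Irr}(B_{l})$, which is precisely the parameter set of simple modules of the asymptotic algebra $\bigoplus_{l=1}^{n}M_{n_{l}}(B_{l})$. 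Since each $B_{l}$ is a commutative reduced affine $k$-algebra, its simple modules are indexed by its maximal spectrum, a finite union of affine spaces, giving the stated description.

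Finally, for $(3)$ I would use that a recollement coming from a stratifying ideal is compatible with finiteness of global dimension in both directions. If $\mathrm{gldim}(A)<\infty$, then restricting finite projective resolutions along the recollement functors shows $\mathrm{gldim}(A/J_{1})<\infty$ and $\mathrm{gldim}(eAe)=\mathrm{gldim}(B_{1})<\infty$; conversely $\mathrm{gldim}(A/J_{1})<\infty$ together with $\mathrm{gldim}(B_{1})<\infty$ bounds $\mathrm{gldim}(A)$ (for instance $\mathrm{gldim}(A)\leq \mathrm{gldim}(A/J_{1})+\mathrm{gldim}(B_{1})+1$, using that $J_{1}$ is stratifying). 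An induction on $n$ then gives $\mathrm{gldim}(A)<\infty$ if and only if $\mathrm{gldim}(B_{l})<\infty$ for every $l$, completing the proof.
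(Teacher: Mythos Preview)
The paper does not prove this theorem at all: it is stated as a quotation of [KX2, Theorem 4.4] and used as a black box later (in Lemma~\ref{idempotent-condition-ern} and Theorem~\ref{final-homological-propertis}). So there is no ``paper's own proof'' to compare your proposal against.

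That said, your sketch is a faithful outline of the argument Koenig and Xi actually give in [KX2]: peel off the bottom cell ideal, show it is stratifying (their Theorem~4.1 and Theorem~4.3 supply precisely this under the hypotheses $\mathrm{rad}(B_{1})=0$, $J_{1}^{2}=J_{1}$, and the existence of an idempotent generator), invoke the Cline--Parshall--Scott recollement for the resulting idempotent ideal, and induct. Your identification of the stratifying-ideal step as the substantive point is correct; in [KX2] the mechanism is that $\mathrm{rad}(B_{1})=0$ together with idempotency forces the swich matrix $\Psi$ of the bilinear form $\rho_{1}$ to be invertible over the reduced ring $B_{1}$, which in turn makes $Ae\otimes_{eAe}eA\to J_{1}$ an isomorphism and kills the higher $\mathrm{Tor}$. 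Your proposal does not supply that argument, only names it, so as a stand-alone proof it is incomplete at exactly the place you flagged; but as a reconstruction of the strategy in [KX2] it is accurate.
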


\begin{remark} Koenig [Koe, p. 531] called an affine cellular algebra with the assumptions stated as in Theorem \ref{affine-cellua-YH-alg} an affine quasi-hereditary algebra, since it implies the crucial homological properties analogous to known results about quasi-hereditary algebras and highest weight categories; see also [Kle] for the graded version of affine quasi-heredity.
\end{remark}

\subsection{An axiomatic approach}

In this subsection, we shall review an \emph{axiomatic approach} to affine cellular algebras established in [C2]. We [C2] show which conditions in [L4, Section 1] are required to prove that an algebra with a cell theory is affine cellular in the sense of Koenig and Xi.

Let $k=\mathbb{Z}[v, v^{-1}]$ $($$v$ an indeterminate$)$, and let $A$ be an associative $k$-algebra with a $k$-involution $i.$ Given a set $X,$ we say that an embedding $X\rightarrow A$ ($\lambda\mapsto 1_{\lambda}$) is a \emph{generalized unit} for $A$ if $1_{\lambda}1_{\lambda'}=\delta_{\lambda, \lambda'}1_{\lambda}$ for $\lambda, \lambda'\in X$ and $A=\sum_{\lambda, \lambda'\in X}1_{\lambda}A1_{\lambda'}.$

We assume that we are given a basis $B$ of $A$ as a $k$-module which is \emph{compatible} with the generalized unit in the following sense: the elements $1_{\lambda}$ ($\lambda\in X$) lie in $B$ and any $b\in B$ is contained in $1_{\lambda}A1_{\lambda'}$ for some uniquely determined $\lambda, \lambda'\in X.$

We assume that the structure constants $c_{b, b'}^{b''}\in k$ are given by
\begin{align*}
bb'=\sum_{b''\in B}c_{b, b'}^{b''}b''\quad\text{for }b, b'\in B.
\end{align*}

We say that a two-sided ideal of $A$ is \emph{based} if it is the span of a subset of $B.$ For $b, b'\in B,$ we say that $b\preceq_{LR} b'$ if $b$ lies in every based two-sided ideal which contains $b'.$ We say that $b\sim_{LR} b'$ if $b\preceq_{LR} b'$ and $b'\preceq_{LR} b.$ The equivalence classes for $\sim_{LR}$ are called 2-\emph{cells}.

For each 2-cell $\mathbf{c}$ in $B,$ let $A_{\mathbf{c}}$ be the $k$-subspace of $A$ spanned by $\mathbf{c}.$ There is an associative algebra structure on $A_{\mathbf{c}}$ in which the product of $b, b'\in \mathbf{c}$ is equal to $\sum_{b''\in \mathbf{c}}c_{b, b'}^{b''}b''.$ Note that the algebra $A_{\mathbf{c}}$ is naturally a \emph{subquotient} of $A$.

In the following we define a function $a: B\rightarrow \mathbb{N}\cup \{\infty\}.$ Let $\mathbf{c}$ be a 2-cell and let $L$ be the $\mathbb{Z}[v^{-1}]$-submodule of $A_{\mathbf{c}}$ generated by $\mathbf{c}.$

Let $b\in \mathbf{c}.$ If there is $n\in \mathbb{Z}_{\geq 0}$ such that $v^{-n}bL\subset L,$ then we define $a(b)$ to be the smallest such $n.$ If there is no such $n,$ we set $a(b)=\infty.$

We say that $B$ has \emph{property} $P_{1}$ if\vskip1mm

(a) the number of 2-cells in $B$ is countable, and is indexed by a partially ordered \hspace*{3.05em}set; further, the partial order $\preceq$ is compatible with the partial order $\preceq_{LR}$ and \hspace*{3.05em}for a given 2-cell $\mathbf{c},$ the set $\{\mathbf{c}'\:|\:\mathbf{c}'\preceq \mathbf{c}\}$ is finite.

(b) $a(b)$ is finite for all $b\in B.$

(c) For each 2-cell $\mathbf{c}$ and each $\lambda_{1}\in X,$ the restriction of $a$ to $\mathbf{c}1_{\lambda_{1}}$ is constant.\vskip1mm

Assume that $B$ has property $P_{1}.$ We can define for each 2-cell $\mathbf{c}$ the \emph{asymptotic ring} $A_{\mathbf{c}}^{\infty}$ with $\mathbb{Z}$-basis $t_{\mathbf{c}}=\{t_{b}\:|\:b\in \mathbf{c}\},$ and multiplication defined by $$t_{b}t_{b'}=\sum_{b''\in \mathbf{c}}\gamma_{b, b'}^{b''}t_{b''},$$
where $\gamma_{b, b'}^{b''}\in \mathbb{Z}$ is given by $v^{-a(b)}c_{b, b'}^{b''}=\gamma_{b, b'}^{b''}~\mathrm{mod}~v^{-1}\mathbb{Z}[v^{-1}].$

We say that $B,$ equipped with property $P_{1},$ has \emph{property} $P_{2}$ if for any 2-cell $\mathbf{c},$ the $\mathbb{Z}$-algebra $A_{\mathbf{c}}^{\infty}$ admits a generalized unit $\mathcal{D}_{\mathbf{c}}\rightarrow A_{\mathbf{c}}^{\infty},$ where $\mathcal{D}_{\mathbf{c}}$ is a finite set, and the basis $t_{\mathbf{c}}$ is compatible with this generalized unit.

We will identify $\mathcal{D}_{\mathbf{c}}$ with a subset of $\mathbf{c},$ so that the embedding $\mathcal{D}_{\mathbf{c}}\rightarrow t_{\mathbf{c}}$ is $d\mapsto t_{d}.$ In this case, the asymptotic ring $A_{\mathbf{c}}^{\infty}$ has 1, namely $1=\sum_{d\in \mathcal{D}_{\mathbf{c}}}t_{d}.$

Assume that $B$ has property $P_{1}.$ We say that $B$ has \emph{property} $P_{3}$ if we have the following equations for any $b_{1}, b_{2}, b_{3}, b'\in B$ with $b', b_{2}$ belonging to the same 2-cell $\mathbf{c}$:
$$\sum\limits_{b\in \mathbf{c}} c_{b_{1}, b_{2}}^{b}\gamma_{b, b_{3}}^{b'}=\sum\limits_{b\in \mathbf{c}} c_{b_{1}, b}^{b'}\gamma_{b_{2}, b_{3}}^{b},$$
$$\sum\limits_{b\in \mathbf{c}} \gamma_{b_{1}, b_{2}}^{b}c_{b, b_{3}}^{b'}=\sum\limits_{b\in \mathbf{c}} \gamma_{b_{1}, b}^{b'}c_{b_{2}, b_{3}}^{b}.$$

Let $\mathbf{c}$ be a 2-cell. Assume that $G_{\mathbf{c}}$ is a reductive group over $\mathbb{C}$ and that $\mathrm{Irr}~ G_{\mathbf{c}}$ is the set of irreducible complex representations of $G_{\mathbf{c}}.$ Let $T_{\mathbf{c}}$ be the set of triples $(d, d', s),$ where $d, d'\in \mathcal{D}_{\mathbf{c}}$ and $s\in \mathrm{Irr}~ G_{\mathbf{c}}.$ Let $J_{\mathbf{c}}$ be the free abelian group on $T_{\mathbf{c}}$ with a ring structure defined by $$(d_{1},d_{1}',s)(d_{2},d_{2}',s')=\delta_{d_{1}',d_{2}}\sum\limits_{s''\in \mathrm{Irr}~ G_{\mathbf{c}}}c_{s,s'}^{s''}(d_{1},d_{2}',s''),$$ where $c_{s,s'}^{s''}$ is the multiplicity of $s''$ in the tensor product $s\otimes s'$.

Since $\mathcal{D}_{\mathbf{c}}$ is a finite set, we will use $\{1, 2,\ldots, n_{\mathbf{c}}\}$ to label these elements in it, where $n_{\mathbf{c}}=|\mathcal{D}_{\mathbf{c}}|$. From now on we will always use this fixed label.

We say that $B,$ endowed with properties $P_{1}$ and $P_{2},$ has \emph{property} $P_{4}$ if the following conditions hold.\vskip1mm

(a) For each 2-cell $\mathbf{c},$ there is a bijection between $\mathbf{c}$ and the set $C=\{(j, l, s)\:|\:1\leq j,l\leq  n_{\mathbf{c}}, s\in \mathrm{Irr}~ G_{\mathbf{c}}\}.$ Moreover, if $b\in \mathbf{c}$ corresponds to $(j, l, s),$ we have $i(b)$ corresponds to $(l, j, \sigma(s)),$ where $\sigma(s)$ denotes the dual representation of $s.$

Hereafter, we will identify $\mathbf{c}$ with the set $C.$

(b) There exists a ring isomorphism $A_{\mathbf{c}}^{\infty}\rightarrow J_{\mathbf{c}},$ that is, the asymptotic ring $A_{\mathbf{c}}^{\infty}$ is isomorphic to an $n_{\mathbf{c}}\times n_{\mathbf{c}}$ matrix algebra over $B_{\mathbf{c}},$ where $B_{\mathbf{c}}$ is the representation ring of $G_{\mathbf{c}}.$ The isomorphism is given by $t_{b}\mapsto E_{jl}(s)$ for $b=(j, l, s)\in \mathbf{c}.$\vskip1mm

In [C2, Theorem 3.4] we have proved the following result by using Proposition \ref{kx2-proposition-affine-cell}.
\begin{theorem}\label{affine-cell-theorem}
Let $k=\mathbb{Z}[v, v^{-1}]$ $($$v$ an indeterminate$)$. If a $k$-basis $B$ of $A$ with a $k$-involution $i$ satisfies properties $P_{1},$ $P_{2},$ $P_{3}$ and $P_{4},$ then $A$ is an affine cellular algebra with respect to $i.$ Moreover, when $\mathbf{c}$ runs through all 2-cells in $B,$ the set consisting of the algebras $A_{\mathbf{c}}$ forms a complete set of layers in an affine cell chain of $A.$
\end{theorem}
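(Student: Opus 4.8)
The plan is to produce an affine cell chain of $A$ whose layers are exactly the subquotient algebras $A_{\mathbf{c}}$, and to recognise each $A_{\mathbf{c}}$ as a generalized matrix algebra by invoking Proposition~\ref{kx2-proposition-affine-cell}. First I would order the $2$-cells: using $P_{1}$(a), choose a linear order $\mathbf{c}_{1},\mathbf{c}_{2},\dots$ refining $\preceq$, and set $J_{l}:=\mathrm{span}_{k}\big(\bigcup_{h\le l}\mathbf{c}_{h}\big)$. Since $\preceq$ is compatible with $\preceq_{LR}$, each $J_{l}$ is a based two-sided ideal, so $0=J_{0}\subset J_{1}\subset J_{2}\subset\cdots$ is an ascending chain of based two-sided ideals (finite when $A$ has finitely many $2$-cells, which is the relevant case). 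From the definition of $\preceq_{LR}$, for $b,b'$ in a common $2$-cell $\mathbf{c}_{l}$ the product $bb'$ agrees modulo $J_{l-1}$ with the product computed in $A_{\mathbf{c}_{l}}$, so $J_{l}/J_{l-1}\cong A_{\mathbf{c}_{l}}$ as $k$-algebras. Moreover $P_{4}$(a) forces $i(\mathbf{c})=\mathbf{c}$ for every $2$-cell, hence $i(B)=B$, $i(J_{l})=J_{l}$, and $i$ induces on each $A_{\mathbf{c}_{l}}$ an involution compatible with the labelling of $P_{4}$(a).

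By the previous step it now suffices to show that for each $2$-cell $\mathbf{c}$ the algebra $A_{\mathbf{c}}$ is an affine cell ideal of the relevant quotient of $A$. By Proposition~\ref{kx2-proposition-affine-cell}, together with $i(A_{\mathbf{c}})=A_{\mathbf{c}}$, this reduces to producing an affine $k$-algebra $\mathcal{B}_{\mathbf{c}}$ with a $k$-involution $\sigma$, a free $k$-module $V_{\mathbf{c}}$ of rank $n_{\mathbf{c}}$, a $k$-bilinear form $\rho_{\mathbf{c}}\colon V_{\mathbf{c}}\otimes_{k}V_{\mathbf{c}}\to\mathcal{B}_{\mathbf{c}}$ with $\sigma\rho_{\mathbf{c}}(u,v)=\rho_{\mathbf{c}}(v,u)$, and a $k$-algebra isomorphism $A_{\mathbf{c}}\xrightarrow{\sim}(M_{n_{\mathbf{c}}}(\mathcal{B}_{\mathbf{c}}),\rho_{\mathbf{c}})$ sending the basis element $(j,l,s)$ of $\mathbf{c}$ to $E_{jl}(s)$; by $P_{4}$(a) such a map automatically intertwines $i$ with the involution $\kappa$. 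I would take $\mathcal{B}_{\mathbf{c}}:=k\otimes_{\mathbb{Z}}B_{\mathbf{c}}$, the scalar extension of the representation ring of the reductive group $G_{\mathbf{c}}$ of $P_{4}$ — an affine $k$-algebra because $B_{\mathbf{c}}$ is a finitely generated commutative $\mathbb{Z}$-algebra — endowed with the $k$-involution induced by $s\mapsto\sigma(s)$, and $V_{\mathbf{c}}$ free on $\{e_{d}\mid d\in\mathcal{D}_{\mathbf{c}}\}$.

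The heart of the matter is the construction of $\rho_{\mathbf{c}}$ and of the isomorphism, and here I would follow Lusztig's comparison of an affine Hecke algebra with its asymptotic ring [L6] (cf.\ [L4, Section 1]). By $P_{2}$ the elements $t_{d}$ ($d\in\mathcal{D}_{\mathbf{c}}$) are orthogonal idempotents summing to $1$ in $A_{\mathbf{c}}^{\infty}$, so under the isomorphism $A_{\mathbf{c}}^{\infty}\cong M_{n_{\mathbf{c}}}(B_{\mathbf{c}})$ of $P_{4}$(b) they must be the diagonal matrix units $E_{dd}(\mathbf{1})$; hence in $A_{\mathbf{c}}$ the basis element $d$ corresponds to $(d,d,\mathbf{1})$. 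One first checks, as in Lusztig's cell theory (the analogue of the fact that $bb'$ lies in the left cell of $b$ and the right cell of $b'$), that the structure constants of $A_{\mathbf{c}}$ satisfy $c_{(j_{1},l_{1},s_{1}),(j_{2},l_{2},s_{2})}^{b''}=0$ unless $b''=(j_{1},l_{2},s'')$ for some $s''$; in particular $d\cdot d'$ is supported on $\{(d,d',s)\mid s\in\mathrm{Irr}\,G_{\mathbf{c}}\}$. I then \emph{define} $\rho_{\mathbf{c}}(e_{d},e_{d'}):=\sum_{s}c_{d,d'}^{(d,d',s)}\,s\in\mathcal{B}_{\mathbf{c}}$, extend it $k$-bilinearly, and set $\Phi((j,l,s)):=E_{jl}(s)$. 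Applying $i$ to $d\cdot d'$ and using $P_{4}$(a) yields $\sigma\rho_{\mathbf{c}}(u,v)=\rho_{\mathbf{c}}(v,u)$, and $\Phi$ is visibly a $k$-module isomorphism. The remaining, and only delicate, point is multiplicativity of $\Phi$, i.e.\ the identity
$$\sum_{s''}c_{(j_{1},l_{1},s_{1}),(j_{2},l_{2},s_{2})}^{(j_{1},l_{2},s'')}\,s''=s_{1}\,\rho_{\mathbf{c}}(e_{l_{1}},e_{j_{2}})\,s_{2}\quad\text{in }\mathcal{B}_{\mathbf{c}}$$
for all basis elements of $\mathbf{c}$. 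To prove it one writes an arbitrary basis element as $d\cdot(\,\cdot\,)\cdot d'$, uses the defining congruence $v^{-a(b)}c_{b,b'}^{b''}\equiv\gamma_{b,b'}^{b''}\pmod{v^{-1}\mathbb{Z}[v^{-1}]}$ together with the constancy of $a$ on each $\mathbf{c}\,1_{\lambda}$ ($P_{1}$(c)) to control leading terms, and uses the two associativity identities of $P_{3}$ to pass from the generating case to the general one. Since modulo $v^{-1}$ the map $\Phi$ reduces to the isomorphism of $P_{4}$(b), its lower-order corrections form a unipotent perturbation for the $v$-adic filtration, so $\Phi$ is bijective. Feeding the isomorphisms $A_{\mathbf{c}}\cong(M_{n_{\mathbf{c}}}(\mathcal{B}_{\mathbf{c}}),\rho_{\mathbf{c}})$ back into Proposition~\ref{kx2-proposition-affine-cell} shows that the chain built in the first step is an affine cell chain whose layers are exactly the algebras $A_{\mathbf{c}}$, which is the assertion.

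I expect the displayed identity in the last step to be the main obstacle: upgrading the leading-order matrix picture furnished by $P_{4}$ to an exact algebra isomorphism over $k=\mathbb{Z}[v,v^{-1}]$. Everything else — ordering the cells, identifying the subquotients, checking the involution — is essentially formal; it is the fine compatibility between the structure constants $c_{b,b'}^{b''}$ and the asymptotic constants $\gamma_{b,b'}^{b''}$, encoded in $P_{3}$, together with Lusztig's technique of factoring cell elements through distinguished involutions, that carries the argument.
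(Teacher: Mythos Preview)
The paper does not supply its own proof of this theorem; it simply records that the result was established in [C2, Theorem~3.4] using Proposition~\ref{kx2-proposition-affine-cell}. Your outline follows exactly that route: linearly refine the partial order on $2$-cells to obtain a chain of based ideals, identify each subquotient with $A_{\mathbf{c}}$, and then invoke Proposition~\ref{kx2-proposition-affine-cell} by exhibiting $A_{\mathbf{c}}\cong(M_{n_{\mathbf{c}}}(k\otimes_{\mathbb{Z}}B_{\mathbf{c}}),\rho_{\mathbf{c}})$ with $\rho_{\mathbf{c}}$ read off from products of distinguished elements. This is the intended argument, and your sketch of the key step is correct: the support condition $c_{(j_1,l_1,s_1),(j_2,l_2,s_2)}^{b''}=0$ unless $b''=(j_1,l_2,s'')$ follows from $P_3$ by taking $b_1=d_{j_1}$ (respectively $b_3=d_{l_2}$) and using that $t_{d}$ acts as the diagonal matrix unit under $P_4$(b); two further applications of $P_3$, with $b_2$ equal to $d_{l_1}$ and then $d_{j_2}$, reduce the displayed multiplicativity identity to associativity in $B_{\mathbf{c}}$.

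Two small remarks. First, the appeal to a ``unipotent perturbation for the $v$-adic filtration'' is unnecessary: your $\Phi$ is \emph{defined} as a bijection of $k$-bases, so it is already a $k$-module isomorphism; only multiplicativity needs work. Second, Definition~2.1 requires a \emph{finite} cell chain, whereas $P_1$(a) allows countably many $2$-cells; your parenthetical ``finite when $A$ has finitely many $2$-cells, which is the relevant case'' is therefore not incidental but an actual hypothesis needed for the conclusion (and is satisfied in all the applications of the paper, where the $2$-cells of $\widehat{H}_{n_i}$ are parametrised by partitions of $n_i$).
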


\section{Another presentation of affine Yokonuma-Hecke algebras}

In this section, we shall give a new presentation of the affine Yokonuma-Hecke algebra $\widehat{Y}_{r,n}(q)$. Let us first recall the definition of the Yokonuma-Hecke algebra $Y_{r,n}(q)$ and give another presentation of it.

\subsection{Yokonuma-Hecke algebras}
Let $r, n\in \mathbb{N},$ $r, n\geq1,$ and let $\zeta=e^{2\pi i/r}.$ Let $q$ be an indeterminate and let $\mathcal{R}=\mathbb{Z}[\frac{1}{r}][q,q^{-1},\zeta].$ By [ChPA1, $\S2.1$] the \emph{Yokonuma-Hecke algebra} $Y_{r,n}=Y_{r,n}(q)$ is an $\mathcal{R}$-associative algebra generated by the elements $t_{1},\ldots,t_{n}$, $h_{1},\ldots,h_{n-1}$ satisfying the following relations:
\begin{equation}\label{rel-def-Y1-ac}\begin{array}{rclcl}
h_ih_j\hspace*{-7pt}&=&\hspace*{-7pt}h_jh_i && \mbox{for $1\leq i,j\leq n-1$ such that $\vert i-j\vert \geq 2$,}\\[0.1em]
h_ih_{i+1}h_i\hspace*{-7pt}&=&\hspace*{-7pt}h_{i+1}h_ih_{i+1} && \mbox{for $1\leq i\leq n-2$,}\\[0.1em]
t_it_j\hspace*{-7pt}&=&\hspace*{-7pt}t_jt_i &&  \mbox{for $1\leq i,j\leq n$,}\\[0.1em]
h_it_j\hspace*{-7pt}&=&\hspace*{-7pt}t_{s_i(j)}h_i && \mbox{for $1\leq i\leq n-1$ and $1\leq j\leq n$,}\\[0.1em]
t_i^r\hspace*{-7pt}&=&\hspace*{-7pt}1 && \mbox{for $1\leq i\leq n$,}\\[0.2em]
h_{i}^{2}\hspace*{-7pt}&=&\hspace*{-7pt}1+(q-q^{-1})e_{i}h_{i} && \mbox{for $1\leq i\leq n-1$,}
\end{array}
\end{equation}
where $s_{i}$ is the transposition $(i,i+1)$ in the symmetric group $\mathfrak{S}_n$ on $n$ letters, and for each $1\leq i\leq n-1$,
$$e_{i} :=\frac{1}{r}\sum\limits_{s=0}^{r-1}t_{i}^{s}t_{i+1}^{-s}.$$

Note that the elements $e_{i}$ are idempotents in $Y_{r,n}.$ The elements $h_{i}$ are invertible with the inverse given by
\begin{equation*}\label{inverse-ac}
h_{i}^{-1}=h_{i}-(q-q^{-1})e_{i}\quad\mbox{for}~1\leq i\leq n-1.
\end{equation*}

Let $w\in \mathfrak{S}_{n},$ and let $w=s_{i_1}\cdots s_{i_{r}}$ be a \emph{reduced expression} of $w.$ By Matsumoto's lemma, the element $h_{w} :=h_{i_1}h_{i_2}\cdots h_{i_{r}}$ does not depend on the choice of the reduced expression of $w$. Let $\ell$ denote the length function on $\mathfrak{S}_{n}.$ Then we have
\begin{align}\label{multi-formula-ac}
h_{s_{i}}h_{w}=\begin{cases}h_{s_{i}w}& \hbox {if } \ell(s_{i}w)>\ell(w), \\h_{s_{i}w}+(q-q^{-1})e_{i}h_{w}& \hbox {if } \ell(s_{i}w)<\ell(w).\end{cases}
\end{align}

Using the multiplication formula \eqref{multi-formula-ac}, Juyumaya [Ju] has proved that the following set is an $\mathcal{R}$-basis for $Y_{r,n}$:
\begin{equation}\label{basis-ac}
\mathcal{B}_{r,n}=\{t_{1}^{k_1}\cdots t_{n}^{k_n}h_{w}\:|\:0\leq k_1,\ldots,k_{n}\leq r-1\text{ and }w\in \mathfrak{S}_{n}\}.
\end{equation}
Thus, $Y_{r,n}$ is a free $\mathcal{R}$-module of rank $r^{n}n!.$

\subsection{Combinatorics of multipartitions and multitableaux}
In this subsection we recall some combinatorial objects which we will use. $\lambda=(\lambda_{1},\ldots,\lambda_{k})$ is called a \emph{partition} of $n$ if it is a finite sequence of weakly decreasing nonnegative integers whose sum is $n.$ We write $\lambda\vdash n$ if $\lambda$ is a partition of $n,$ and we define $|\lambda| :=n.$

To a partition $\lambda$ we associate a \emph{Young diagram}, which is the set $$[\lambda] :=\{(i,j)\:|\:i\geq 1~\mathrm{and}~1\leq j\leq \lambda_{i}\}.$$ We shall regard $[\lambda]$ as a left-justified array of boxes such that there exist $\lambda_{j}$ boxes in the $j$-th row for $j=1,\ldots,k.$ We write $\theta=(a,b)\in [\lambda]$ if the box $\theta$ lies in row $a$ and column $b$ of $[\lambda].$

For $\lambda\vdash n,$ we define a $\lambda$-\emph{tableau} (or \emph{tableau of shape} $\lambda$) by replacing each node of $[\lambda]$ with one of the integers $1,2,\ldots,n,$ allowing no repeats. For $\lambda\vdash n,$ we say that a $\lambda$-tableau $\mathfrak{t}$ is \emph{standard} if the entries in each row (resp. column) of $\mathfrak{t}$ \emph{increase} from left to right (resp. from top to bottom).

The combinatorial objects appearing in the representation theory of the Yokonuma-Hecke algebra $Y_{r,n}$ will be $r$-multipartitions. By definition, an $r$-\emph{multipartition} of $n$ is an ordered $r$-tuple $\bm{\lambda}=(\lambda^{(1)},\lambda^{(2)},\ldots,\lambda^{(r)})$ of partitions $\lambda^{(k)}$ such that $\sum_{k=1}^{r}|\lambda^{(k)}|=n.$ We call $\lambda^{(k)}$ the $k$-th \emph{component} of $\bm{\lambda}.$ We denote by $\mathcal{P}_{r,n}$ the set of $r$-multipartitions of $n.$ The \emph{Young diagram} $[\bm{\lambda}]$ of an $r$-multipartition $\bm{\lambda}$ is an ordered $r$-tuple of the Young diagram of its components. We write $\bm{\theta}=(\theta, s)\in [\bm{\lambda}]$ if the box $\theta$ lies in the Young diagram of the component $\lambda^{(s)}$ of $\bm{\lambda}.$

Assume that $\bm{\lambda}\in \mathcal{P}_{r,n}.$ A $\bm{\lambda}$-\emph{multitableau} (or \emph{multitableau of shape} $\bm{\lambda}$) $\mathfrak{t}=(\mathfrak{t}^{(1)},\ldots,\mathfrak{t}^{(r)})$ is defined by replacing each node of $[\bm{\lambda}]$ with one of the integers $1,2,\ldots,n,$ allowing no repeats. We will call the number $n$ the \emph{size} of $\mathfrak{t}$ and $\mathfrak{t}^{(k)}$ the $k$-th \emph{component} of $\mathfrak{t}.$

For each $\bm{\lambda}\in \mathcal{P}_{r,n},$ a $\bm{\lambda}$-multitableau $\mathfrak{t}$ is called \emph{standard} if the numbers \emph{increase} along any row (from left to right) and down any column (from top to bottom) of each diagram in $[\bm{\lambda}].$ For $\bm{\lambda}\in \mathcal{P}_{r,n},$ let $\mathrm{Std}(\bm{\lambda})$ denote the set of standard $\bm{\lambda}$-multitableaux of size $n$.

Let $\bm{\lambda}\in \mathcal{P}_{r,n}$ and $\mathfrak{t}$ be a $\bm{\lambda}$-multitableau. For $j=1,\ldots,n$, we define $\mathrm{p}_{\mathfrak{t}}(j)=k$ if $j$ appears in the $k$-th component $\mathfrak{t}^{(k)}$ of $\mathfrak{t}.$

\subsection{Another presentation of the Yokonuma-Hecke algebra}
We fix once and for all a total order on the set of $r$-th roots of unity via setting $\zeta_{k} :=\zeta^{k-1}$ for $1\leq k\leq r.$ We denote by $\underline{\mathfrak{s}}_{n}$ the set of $n$-tuples of $r$-th roots of unity. There is a natural action of the symmetric group $\mathfrak{S}_n$ on $\underline{\mathfrak{s}}_{n}$ by permuting $n$-tuples.

Let $\mathcal{A}=\mathbb{Z}[q, q^{-1}]$ $($$q$ an indeterminate$)$. We define $H_{r,n}$ to be an associative $\mathcal{A}$-algebra generated by the elements $\{g_{i}\:|\:i=1,\ldots,n-1\}$ and $\{1_{\lambda}\:|\:\lambda\in \underline{\mathfrak{s}}_{n}\}$ with the following relations:
\begin{equation}\label{rel-def-Y1-ac-another}\begin{array}{rclcl}
g_ig_j\hspace*{-7pt}&=&\hspace*{-7pt}g_jg_i && \mbox{for $1\leq i,j\leq n-1$ such that $\vert i-j\vert \geq 2$,}\\[0.1em]
g_ig_{i+1}g_i\hspace*{-7pt}&=&\hspace*{-7pt}g_{i+1}g_ig_{i+1} && \mbox{for $1\leq i\leq n-2$,}\\[0.1em]
g_{i}1_{\lambda}\hspace*{-7pt}&=&\hspace*{-7pt}1_{s_{i}(\lambda)}g_{i} &&  \mbox{for $1\leq i\leq n-1$ and $\lambda\in \underline{\mathfrak{s}}_{n}$,}\\[0.1em]
\sum_{\lambda\in \underline{\mathfrak{s}}_{n}}1_{\lambda}\hspace*{-7pt}&=&\hspace*{-7pt}1,\\[0.1em]
1_\lambda1_{\lambda'}\hspace*{-7pt}&=&\hspace*{-7pt}\delta_{\lambda, \lambda'}1_{\lambda}&&\mbox{for all $\lambda, \lambda'\in \underline{\mathfrak{s}}_{n},$}\\[0.1em]
g_{i}^{2}\hspace*{-7pt}&=&\hspace*{-7pt}1+(q-q^{-1})\sum\limits_{\lambda\mid s_{i}\in W_{\lambda}}g_{i}1_{\lambda}&& \mbox{for $1\leq i\leq n-1,$}
\end{array}
\end{equation}
where $W_{\lambda}=\{w\in \mathfrak{S}_{n}\:|\:w(\lambda)=\lambda\}.$ Note that the elements $g_{i}$ are invertible. By Matsumoto's lemma, we see that for each $w\in \mathfrak{S}_{n},$ the element $g_{w} :=g_{i_1}g_{i_2}\cdots g_{i_{r}}$ is well-defined if $w=s_{i_1}\cdots s_{i_{r}}$ is a reduced expression of $w.$

We denote by $\mathcal{P}_{r,n}^{1}$ the set of elements of $\mathcal{P}_{r,n}$ of the form $((1^{n_{1}}), (1^{n_{2}}),\ldots, (1^{n_{r}})),$ where $n_{i}\geq 0$ and $\sum_{i}n_{i}=n.$ We define
\begin{equation*}\label{mn-ac}
\mathrm{Std}^{1} :=\{\mathfrak{s}\:|\:\mathfrak{s}\in \mathrm{Std}(\bm{\lambda})\text{ for }\bm{\lambda}\in \mathcal{P}_{r,n}^{1}\}.
\end{equation*}

The following lemma appears in the proof of Proposition 1 in the first version of [ER], but it has been deleted in the current version of this paper. For convenience, we give Espinoza and Ryom-Hansen's proof here.
\begin{lemma}\label{yokonuma-hecke-bijection}
There exists a natural bijection between the sets $\underline{\mathfrak{s}}_{n}$ and $\mathrm{Std}^{1}.$
\end{lemma}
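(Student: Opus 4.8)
The plan is to observe that both $\underline{\mathfrak{s}}_{n}$ and $\mathrm{Std}^{1}$ are canonically parametrised by the set of maps $\{1,\ldots,n\}\to\{1,\ldots,r\}$, and then to write down the resulting bijection $\underline{\mathfrak{s}}_{n}\leftrightarrow\mathrm{Std}^{1}$ explicitly. On one side, since $\zeta_{k}=\zeta^{k-1}$ defines a bijection from $\{1,\ldots,r\}$ onto the set of $r$-th roots of unity, an element of $\underline{\mathfrak{s}}_{n}$ is the same datum as an $n$-tuple $(a_{1},\ldots,a_{n})$ with $a_{j}\in\{1,\ldots,r\}$, namely $\lambda=(\zeta_{a_{1}},\ldots,\zeta_{a_{n}})$. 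On the other side, a multitableau whose shape lies in $\mathcal{P}_{r,n}^{1}$ has every component equal to a single column, so a standard filling of it is completely determined by the ordered partition of $\{1,\ldots,n\}$ into the (possibly empty) blocks $B_{k}=\{j\mid\mathrm{p}_{\mathfrak{s}}(j)=k\}$: the entries of the $k$-th column are forced to be listed in increasing order from top to bottom.

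Concretely, I would define $\Phi\colon\underline{\mathfrak{s}}_{n}\to\mathrm{Std}^{1}$ as follows. Given $\lambda=(\zeta_{a_{1}},\ldots,\zeta_{a_{n}})$, put $n_{k}:=\#\{j\mid a_{j}=k\}$ and $\bm{\lambda}:=((1^{n_{1}}),\ldots,(1^{n_{r}}))\in\mathcal{P}_{r,n}^{1}$, and let $\Phi(\lambda)$ be the $\bm{\lambda}$-multitableau whose $k$-th component is the column containing the integers $\{j\mid a_{j}=k\}$ written in increasing order downwards. Each component being a strictly increasing column, $\Phi(\lambda)$ is standard, so $\Phi(\lambda)\in\mathrm{Std}^{1}$. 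In the other direction I would define $\Psi\colon\mathrm{Std}^{1}\to\underline{\mathfrak{s}}_{n}$ by $\Psi(\mathfrak{s}):=(\zeta_{\mathrm{p}_{\mathfrak{s}}(1)},\ldots,\zeta_{\mathrm{p}_{\mathfrak{s}}(n)})$, which manifestly lies in $\underline{\mathfrak{s}}_{n}$.

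It then remains to check $\Psi\circ\Phi=\mathrm{id}$ and $\Phi\circ\Psi=\mathrm{id}$. For the first: by construction $j$ sits in the $a_{j}$-th component of $\Phi(\lambda)$, so $\mathrm{p}_{\Phi(\lambda)}(j)=a_{j}$ and hence $\Psi(\Phi(\lambda))=\lambda$. For the second: given $\mathfrak{s}\in\mathrm{Std}(\bm{\mu})$ with $\bm{\mu}\in\mathcal{P}_{r,n}^{1}$, the multitableau $\Phi(\Psi(\mathfrak{s}))$ has the same shape $\bm{\mu}$ and, in its $k$-th column, exactly the entries $\{j\mid\mathrm{p}_{\mathfrak{s}}(j)=k\}$ listed increasingly; but these are precisely the entries of the $k$-th column of $\mathfrak{s}$, and since $\mathfrak{s}$ is standard they already occur there in increasing order, so $\Phi(\Psi(\mathfrak{s}))=\mathfrak{s}$. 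There is essentially no obstacle here: the only point needing a word is that a standard filling of a single column is uniquely recovered from its set of entries, which is immediate from the increasing condition. (As a consistency check, $\#\underline{\mathfrak{s}}_{n}=r^{n}=\#\mathrm{Std}^{1}$, the latter being the number of maps $\{1,\ldots,n\}\to\{1,\ldots,r\}$.) The bijection is \emph{natural} in the sense that it involves no choices beyond the total order $\zeta_{k}=\zeta^{k-1}$ already fixed on the $r$-th roots of unity.
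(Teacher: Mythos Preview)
Your proof is correct and takes essentially the same approach as the paper: both construct the map $\underline{\mathfrak{s}}_{n}\to\mathrm{Std}^{1}$ by placing the integer $j$ into the $a_{j}$-th column (where the paper describes this recursively for $j=1,2,3,\ldots$ and you state it in one line), and both give the inverse as $\mathfrak{s}\mapsto(\zeta_{\mathrm{p}_{\mathfrak{s}}(1)},\ldots,\zeta_{\mathrm{p}_{\mathfrak{s}}(n)})$. Your version is in fact slightly more explicit in checking that the two maps are mutually inverse.
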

\begin{proof}
Assume that $(\zeta_{i_1},\ldots,\zeta_{i_{n}})$ is an $n$-tuple of $r$-th roots of unity. We define the corresponding multitableau $\mathfrak{s}$ as follows:\vskip1mm

1. We put the number $1$ in the box $((1,1),i_{1})$ of $\mathfrak{s}.$

2. If $i_{2}=i_{1},$ we then put the number $2$ in the box $((2,1),i_{1});$ otherwise, we put the \hspace*{0.94cm}number $2$ in the box $((1,1),i_{2}).$

3. Assume that $i_3=i_2.$ If in case 2 we have $i_2=i_1,$ then we put the number $3$ in \hspace*{0.95cm}the box $((3,1),i_{1});$ if in case 2 we have $i_2\neq i_1,$ then we put the number $3$ in the box \hspace*{0.94cm}$((2,1),i_{2}).$ Assume that $i_3\neq i_2.$ Then we put the number $3$ in the box $((2,1),i_{1})$ \hspace*{0.95cm}if $i_3=i_1,$ or in the box $((1,1),i_{3})$ if $i_3\neq i_1.$\vskip1mm

Continuing in this way we get a standard multitableau $\mathfrak{s}$ of shape $((1^{c_{1}}), (1^{c_{2}}),\ldots, (1^{c_{r}})),$ where $c_j$ is the number of times that $j$ appears in the set $\{i_1,\ldots,i_{n}\}.$ Conversely, for each standard multitableau $\mathfrak{s}$ of shape $((1^{c_{1}}), (1^{c_{2}}),\ldots, (1^{c_{r}})),$ we have an associated $n$-tuple of $r$-th roots of unity $(\zeta_{\mathrm{p}_{\mathfrak{s}}(1)},\ldots,\zeta_{\mathrm{p}_{\mathfrak{s}}(n)}).$ It is easy to see that this defines the inverse of the first map, and thus we have obtained the desired bijection.
\end{proof}

From the proof of Lemma \ref{yokonuma-hecke-bijection}, we can see that if an $n$-tuple of $r$-th roots of unity $(\zeta_{k_1},\ldots,\zeta_{k_{n}})$ corresponds to a standard multitableau $\mathfrak{s}$ under the bijection, we have $k_{i}=k_{i+1}$ if and only if $i$ and $i+1$ belong to the same column of some component of $\mathfrak{s}.$ By this and [ER, Propositions 31 and 33] we can get the following result, which gives another presentation of $Y_{r,n}$.
\begin{lemma}\label{isom-YeckeHeck-alge-ac}
Let $H_{r,n}^{\mathcal{R}}=\mathcal{R}\otimes_{\mathcal{A}} H_{r,n}.$ Then we have an $\mathcal{R}$-algebra isomorphism $Y_{r,n}\overset{\sim}{\longrightarrow}H_{r,n}^{\mathcal{R}}.$ Moreover, the following set
\begin{equation}\label{pbwbasis-ac-hrn-add-add}
\big\{1_{\lambda}g_{w}\:|\:\lambda\in \underline{\mathfrak{s}}_{n} \text{ and }w\in \mathfrak{S}_{n}\big\}
\end{equation}
forms an $\mathcal{A}$-basis of $H_{r,n}.$
\end{lemma}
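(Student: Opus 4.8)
The plan is to write down mutually inverse algebra maps between $Y_{r,n}$ and $H_{r,n}^{\mathcal{R}}$ explicitly on generators, and then to extract the $\mathcal{A}$-basis of $H_{r,n}$ by comparing ranks, invoking [ER, Propositions 31 and 33] only for the final linear-independence assertion. Since $\mathcal{A}=\mathbb{Z}[q,q^{-1}]\subseteq\mathcal{R}$, we regard $H_{r,n}^{\mathcal{R}}=\mathcal{R}\otimes_{\mathcal{A}}H_{r,n}$, and $Y_{r,n}$ as an $\mathcal{A}$-algebra by restriction when convenient. For $\lambda=(\lambda(1),\ldots,\lambda(n))\in\underline{\mathfrak{s}}_{n}$ each entry $\lambda(j)$ is an $r$-th root of unity, hence lies in $\mathcal{R}$. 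Define $\Phi\colon Y_{r,n}\to H_{r,n}^{\mathcal{R}}$ on generators by $\Phi(h_i)=g_i$ and $\Phi(t_j)=\sum_{\lambda\in\underline{\mathfrak{s}}_{n}}\lambda(j)\,1_{\lambda}$. To see that $\Phi$ is well defined one checks the relations \eqref{rel-def-Y1-ac}: the braid and commutation relations are immediate from \eqref{rel-def-Y1-ac-another} and $1_{\lambda}1_{\lambda'}=\delta_{\lambda,\lambda'}1_{\lambda}$; the relation $t_j^r=1$ uses $\lambda(j)^r=1$ and $\sum_{\lambda}1_{\lambda}=1$; the relation $h_it_j=t_{s_i(j)}h_i$ follows from $g_i1_{\lambda}=1_{s_i(\lambda)}g_i$; and a short character computation gives $\Phi(e_i)=\frac1r\sum_{s}\sum_{\lambda}\bigl(\lambda(i)\lambda(i+1)^{-1}\bigr)^{s}1_{\lambda}=\sum_{\lambda\,:\,s_i\in W_{\lambda}}1_{\lambda}$, so that $\Phi(h_i^2)=g_i^2=1+(q-q^{-1})\sum_{\lambda\,:\,s_i\in W_{\lambda}}g_i1_{\lambda}=1+(q-q^{-1})\Phi(e_i)g_i$, using $g_i1_{\lambda}=1_{\lambda}g_i$ whenever $s_i\in W_{\lambda}$.

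\textbf{The inverse map.} For $1\le j\le n$ and $1\le k\le r$ set $E_j^{(k)}:=\frac1r\sum_{s=0}^{r-1}\zeta^{-(k-1)s}t_j^{s}\in Y_{r,n}$; for fixed $j$ the family $\{E_j^{(1)},\ldots,E_j^{(r)}\}$ consists of orthogonal idempotents with $\sum_k E_j^{(k)}=1$ and $\sum_k\zeta_kE_j^{(k)}=t_j$, and idempotents attached to distinct indices commute, with $\sum_k E_i^{(k)}E_{i+1}^{(k)}=e_i$. Define an $\mathcal{A}$-algebra map $\Psi\colon H_{r,n}\to Y_{r,n}$ by $\Psi(g_i)=h_i$ and $\Psi(1_{\lambda})=\prod_{j=1}^{n}E_j^{(k_j)}$ where $\lambda(j)=\zeta_{k_j}$; one verifies the relations \eqref{rel-def-Y1-ac-another} (orthogonality and completeness of the $\Psi(1_{\lambda})$, the intertwining relation from $h_it_j=t_{s_i(j)}h_i$, and the quadratic relation from $\sum_{\lambda\,:\,s_i\in W_{\lambda}}\Psi(1_{\lambda})=e_i$ together with $h_ie_i=e_ih_i$). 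Base change along $\mathcal{A}\hookrightarrow\mathcal{R}$ gives $\Psi^{\mathcal{R}}\colon H_{r,n}^{\mathcal{R}}\to Y_{r,n}$. Evaluating the composites on generators shows they are the identities: $\Psi^{\mathcal{R}}(\Phi(t_j))=\sum_{\lambda}\lambda(j)\Psi(1_{\lambda})=\sum_{k}\zeta_kE_j^{(k)}=t_j$ after the factors with index $\ne j$ collapse to $1$, while $\Phi(E_j^{(k)})=\sum_{\mu\,:\,\mu(j)=\zeta_k}1_{\mu}$ gives $\Phi(\Psi(1_{\lambda}))=1_{\lambda}$. Hence $\Phi$ is an $\mathcal{R}$-algebra isomorphism $Y_{r,n}\overset{\sim}{\longrightarrow}H_{r,n}^{\mathcal{R}}$.

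\textbf{The basis.} Using $g_i1_{\lambda}=1_{s_i(\lambda)}g_i$ to push idempotents to the left, $\sum_{\lambda}1_{\lambda}=1$ to insert an idempotent at the front, and the quadratic relation to reduce the number of generators, a standard induction on $\ell(w)$ (the Iwahori--Matsumoto argument) shows that $\{1_{\lambda}g_w\mid\lambda\in\underline{\mathfrak{s}}_{n},\ w\in\mathfrak{S}_{n}\}$ spans $H_{r,n}$ over $\mathcal{A}$; this set has $r^{n}n!$ elements. By \eqref{basis-ac} and the isomorphism just established, $H_{r,n}^{\mathcal{R}}\cong Y_{r,n}$ is free of rank $r^{n}n!$ over $\mathcal{R}$, so the spanning set $\{1\otimes1_{\lambda}g_w\}$ of $H_{r,n}^{\mathcal{R}}$ is in fact an $\mathcal{R}$-basis. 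Finally, the $\mathcal{A}$-linear independence of $\{1_{\lambda}g_w\}$ inside $H_{r,n}$ itself is exactly the content of [ER, Propositions 31 and 33] (equivalently, one may produce a faithful $\mathcal{A}$-representation of $H_{r,n}$, or run a Bergman diamond-lemma analysis of the presentation \eqref{rel-def-Y1-ac-another}). Together with spanning this yields the claimed $\mathcal{A}$-basis.

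\textbf{Main obstacle.} The relation-checking for $\Phi$ and $\Psi^{\mathcal{R}}$ and the spanning step are routine; the genuine difficulty is the $\mathcal{A}$-linear independence of $\{1_{\lambda}g_w\}$ in $H_{r,n}$, i.e.\ that the presentation \eqref{rel-def-Y1-ac-another} does not drop the rank below $r^{n}n!$. This cannot be deduced from the isomorphism $H_{r,n}^{\mathcal{R}}\cong Y_{r,n}$ alone, because $\mathcal{A}\hookrightarrow\mathcal{R}$ is flat but not faithfully flat, so the map $H_{r,n}\to H_{r,n}^{\mathcal{R}}$ is not obviously injective; this is precisely where the input from [ER] is essential.
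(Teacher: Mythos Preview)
Your construction of the mutually inverse maps $\Phi$ and $\Psi^{\mathcal{R}}$ is correct and considerably more explicit than the paper's treatment, which simply invokes the bijection of Lemma~\ref{yokonuma-hecke-bijection} together with [ER, Propositions~31 and~33] in lieu of a proof. The relation checks you sketch all go through, and the spanning argument for $\{1_\lambda g_w\}$ is routine, as you say.

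However, your ``Main obstacle'' paragraph rests on a misconception: you do \emph{not} need [ER] (or faithful flatness of $\mathcal{A}\hookrightarrow\mathcal{R}$) for the $\mathcal{A}$-linear independence of $\{1_\lambda g_w\}$ in $H_{r,n}$. You have already established that $\{1\otimes 1_\lambda g_w\}$ is an $\mathcal{R}$-basis of $H_{r,n}^{\mathcal{R}}$, via the isomorphism with $Y_{r,n}$ and the standard fact that a spanning set whose cardinality equals the rank of a finitely generated free module over a commutative ring is automatically a basis. Now suppose $\sum_{\lambda,w} c_{\lambda,w}\,1_\lambda g_w = 0$ in $H_{r,n}$ with $c_{\lambda,w}\in\mathcal{A}$. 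Pushing forward along the canonical map $H_{r,n}\to H_{r,n}^{\mathcal{R}}$ gives $\sum_{\lambda,w} c_{\lambda,w}\,(1\otimes 1_\lambda g_w)=0$ in $H_{r,n}^{\mathcal{R}}$; since the $1\otimes 1_\lambda g_w$ are $\mathcal{R}$-linearly independent and $\mathcal{A}\hookrightarrow\mathcal{R}$ is injective, every $c_{\lambda,w}$ vanishes. The point is that you are not trying to show that $H_{r,n}\to H_{r,n}^{\mathcal{R}}$ is globally injective (which would indeed require faithful flatness); you only need that a specific $\mathcal{A}$-relation among \emph{named} elements is detected after base change, and for that the $\mathcal{R}$-linear independence of the images suffices. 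So your argument is already complete without the final appeal to [ER]; a posteriori it also shows that $H_{r,n}$ is free over $\mathcal{A}$, hence that $H_{r,n}\to H_{r,n}^{\mathcal{R}}$ is injective after all.

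In summary: your approach is correct and more self-contained than the paper's, and in fact stronger than you realize---the invocation of [ER] for linear independence is superfluous.
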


\subsection{The affine Yokonuma-Hecke algebra and its new presentation}
By [ChPA1, $\S3.1$] the \emph{affine Yokonuma-Hecke algebra} $\widehat{Y}_{r,n}=\widehat{Y}_{r,n}(q)$ is an $\mathcal{R}$-associative algebra generated by the elements $t_{1},\ldots,t_{n}$, $h_{1},\ldots,h_{n-1}$, $Y_{1}^{\pm1},$ in which the generators $t_{1},\ldots,t_{n}$, $h_{1},$ $\ldots,h_{n-1}$ satisfy the same relations as in \eqref{rel-def-Y1-ac}, together with the following relations involving the generators $Y_{1}^{\pm1}$:
\begin{equation}\label{rel-def-Y2-ac}\begin{array}{rclcl}
Y_{1}Y_{1}^{-1}\hspace*{-7pt}&=&\hspace*{-7pt}Y_{1}^{-1}Y_{1}=1,\\[0.1em]
h_{1}Y_{1}h_{1}Y_{1}\hspace*{-7pt}&=&\hspace*{-7pt}Y_{1}h_{1}Y_{1}h_{1}, \\[0.1em]
h_{i}Y_{1}\hspace*{-7pt}&=&\hspace*{-7pt}Y_{1}h_{i} &&  \mbox{for $2\leq i\leq n-1$,}\\[0.1em]
t_{j}Y_{1}\hspace*{-7pt}&=&\hspace*{-7pt}Y_{1}t_{j} && \mbox{for $1\leq j\leq n$.}
\end{array}
\end{equation}

We now define $\widehat{H}_{r,n}$ to be an associative $\mathcal{A}$-algebra with generators $\{g_{i}\:|\:i=1,\ldots,n-1\}$, $\{1_{\lambda}\:|\:\lambda\in \underline{\mathfrak{s}}_{n}\}$ and $X_{1}^{\pm1}$, in which the generators $g_{i}$ $(1\leq i\leq n-1)$ and $1_{\lambda}$ $(\lambda\in \underline{\mathfrak{s}}_{n})$ satisfy the same relations as in \eqref{rel-def-Y1-ac-another}, together with the following relations involving the generators $X_{1}^{\pm1}$:
\begin{equation}\label{rel-def-Y2-ac-another}\begin{array}{rclcl}
X_{1}X_{1}^{-1}\hspace*{-7pt}&=&\hspace*{-7pt}X_{1}^{-1}X_{1}=1,\\[0.1em]
g_{1}X_{1}g_{1}X_{1}\hspace*{-7pt}&=&\hspace*{-7pt}X_{1}g_{1}X_{1}g_{1}, \\[0.1em]
g_{i}X_{1}\hspace*{-7pt}&=&\hspace*{-7pt}X_{1}g_{i} &&  \mbox{for $2\leq i\leq n-1$,}\\[0.1em]
X_{1}1_{\lambda}\hspace*{-7pt}&=&\hspace*{-7pt}1_{\lambda}X_{1}&&\mbox{for all $\lambda\in \underline{\mathfrak{s}}_{n}.$}
\end{array}
\end{equation}

By using Matsumoto's lemma again, we see that for each $w\in \mathfrak{S}_{n},$ both elements $h_{w} :=h_{i_1}h_{i_2}\cdots h_{i_{r}}\in \widehat{Y}_{r,n}$ and $g_{w} :=g_{i_1}g_{i_2}\cdots g_{i_{r}}\in \widehat{H}_{r,n}$ are well-defined if $w=s_{i_1}\cdots s_{i_{r}}$ is a reduced expression of $w.$ Moreover, both elements $h_{w}$ and $g_{w}$ are invertible.

We define inductively elements $Y_{2},\ldots,Y_{n}$ in $\widehat{Y}_{r,n}$ by
\begin{equation}\label{X2n}
Y_{i+1} :=h_{i}Y_{i}h_{i}\quad\text{ for }1\leq i\leq n-1.
\end{equation}
By [ChPA1, Proposition 1 and Lemma 1] we have
\begin{equation*}
Y_{i}Y_{j}=Y_{j}Y_{i}\quad\text{and}\quad Y_{j}t_{i}=t_{i}Y_{j}\quad \text{for }1\leq i, j\leq n,
\end{equation*}
and for any $1\leq i\leq n-1$ we have
\begin{equation*}
h_{i}Y_{j}=Y_{j}h_{i}\quad\mathrm{for}~1\leq j\leq n~\mathrm{such~that}~j\neq i, i+1.\label{giXj}
\end{equation*}

In a similar way, we define inductively elements $X_{2},\ldots,X_{n}$ in $\widehat{H}_{r,n}$ by
\begin{equation}\label{X2n-add-add-addadd}
X_{i+1} :=g_{i}X_{i}g_{i}\quad\text{ for }1\leq i\leq n-1.
\end{equation}
It can be easily proved that
\begin{equation*}
X_{i}X_{j}=X_{j}X_{i}\quad\text{and}\quad X_{j}1_{\lambda}=1_{\lambda}X_{j} \quad \text{for }1\leq i, j\leq n \text{ and } \lambda\in \underline{\mathfrak{s}}_{n}.
\end{equation*}
Moreover, for any $1\leq i\leq n-1$ we have
\begin{equation*}
g_{i}X_{j}=X_{j}g_{i}\quad\mathrm{for}~1\leq j\leq n~\mathrm{such~that}~j\neq i, i+1.\label{giXj}
\end{equation*}

By [ChPA2, Theorem 4.4] (see also [CW, Theorem 2.5]), we have the following PBW basis for $\widehat{Y}_{r,n}$:
\begin{equation}\label{pbwbasis-ac}
\big\{Y_{1}^{\alpha_{1}}\cdots Y_{n}^{\alpha_{n}}t_{1}^{\beta_{1}}\cdots t_{n}^{\beta_{n}}h_{w}\:|\:\alpha_{1},\ldots,\alpha_{n}\in \mathbb{Z}, 0\leq\beta_{1},\ldots,\beta_{n}\leq r-1\text{ and }w\in \mathfrak{S}_{n}\big\}.
\end{equation}

Combining these facts with Lemma \ref{isom-YeckeHeck-alge-ac}, we can easily get the following result, which gives a new presentation of $\widehat{Y}_{r,n}$.
\begin{theorem}\label{isom-YeckeHeck-alge-ac-add}
Let $\widehat{H}_{r,n}^{\mathcal{R}}=\mathcal{R}\otimes_{\mathcal{A}} \widehat{H}_{r,n}.$ Then we have an $\mathcal{R}$-algebra isomorphism $\widehat{Y}_{r,n}\overset{\sim}{\longrightarrow}\widehat{H}_{r,n}^{\mathcal{R}}.$ Moreover, the following set
\begin{equation}\label{pbwbasis-ac-hrn-add}
\big\{X_{1}^{\alpha_{1}}\cdots X_{n}^{\alpha_{n}}1_{\lambda}g_{w}\:|\:\alpha_{1},\ldots,\alpha_{n}\in \mathbb{Z}, \lambda\in \underline{\mathfrak{s}}_{n}\text{ and }w\in \mathfrak{S}_{n}\big\}
\end{equation}
forms an $\mathcal{A}$-basis of $\widehat{H}_{r,n}.$
\end{theorem}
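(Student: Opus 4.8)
The plan is to mirror, on the affine level, the passage from $Y_{r,n}$ to $H_{r,n}^{\mathcal{R}}$ already established in Lemma~\ref{isom-YeckeHeck-alge-ac}. First I would exhibit a homomorphism $\Phi\colon\widehat{Y}_{r,n}\to\widehat{H}_{r,n}^{\mathcal{R}}$ extending the isomorphism $Y_{r,n}\overset{\sim}{\to}H_{r,n}^{\mathcal{R}}$ of Lemma~\ref{isom-YeckeHeck-alge-ac}: it sends $t_i,h_i$ to the images already prescribed there (a combination of $1_\lambda$'s and $g_w$'s via the bijection of Lemma~\ref{yokonuma-hecke-bijection}), and sends $Y_1\mapsto X_1$, $Y_1^{-1}\mapsto X_1^{-1}$. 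To see $\Phi$ is well-defined, I must check that the images satisfy the defining relations \eqref{rel-def-Y2-ac}: the relation $Y_1Y_1^{-1}=Y_1^{-1}Y_1=1$ is immediate from \eqref{rel-def-Y2-ac-another}; the braid-type relation $h_1Y_1h_1Y_1=Y_1h_1Y_1h_1$ transforms into $g_1X_1g_1X_1=X_1g_1X_1g_1$ once one knows $\Phi(h_1)$ agrees with $g_1$ up to the idempotent correction — here one uses that $h_1^2=1+(q-q^{-1})e_1h_1$ matches $g_1^2=1+(q-q^{-1})\sum_{\lambda\mid s_1\in W_\lambda}g_11_\lambda$ under the bijection of Lemma~\ref{yokonuma-hecke-bijection} (the point being precisely that $k_1=k_2$ iff $1,2$ lie in the same column, i.e. $s_1\in W_\lambda$); commutation $h_iY_1=Y_1h_i$ for $i\geq 2$ follows from $g_iX_1=X_1g_i$ together with the fact that $\Phi(h_i)$ is a combination of the $g_i$ (and $1_\lambda$'s, which commute with $X_1$); and $t_jY_1=Y_1t_j$ follows from $X_11_\lambda=1_\lambda X_1$ since each $\Phi(t_j)$ is a $\mathcal{R}$-combination of $1_\lambda$'s.

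Next I would construct the inverse homomorphism $\Psi\colon\widehat{H}_{r,n}^{\mathcal{R}}\to\widehat{Y}_{r,n}$ by extending the inverse isomorphism $H_{r,n}^{\mathcal{R}}\overset{\sim}{\to}Y_{r,n}$ of Lemma~\ref{isom-YeckeHeck-alge-ac} and setting $\Psi(X_1^{\pm1})=Y_1^{\pm1}$, again verifying the relations \eqref{rel-def-Y2-ac-another} by the same symmetric check. Since $\Phi\circ\Psi$ and $\Psi\circ\Phi$ fix the algebra generators, they are the respective identities, so $\widehat{Y}_{r,n}\cong\widehat{H}_{r,n}^{\mathcal{R}}$. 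For the basis claim I would argue that $\Phi$ carries the PBW basis \eqref{pbwbasis-ac} onto the set \eqref{pbwbasis-ac-hrn-add}: by Lemma~\ref{isom-YeckeHeck-alge-ac} the image of $\{t_1^{\beta_1}\cdots t_n^{\beta_n}h_w\}$ is, up to an $\mathcal{R}$-change of basis, the set $\{1_\lambda g_w\}$, and since $\Phi(Y_i)=X_i$ (this needs one line: $\Phi$ is a homomorphism and $Y_{i+1}=h_iY_ih_i$, $X_{i+1}=g_iX_ig_i$, with $\Phi(h_i)$ playing the role of $g_i$ in the relevant products — strictly one checks $\Phi(h_i)X_i\Phi(h_i)=g_iX_ig_i$ using that the $1_\lambda$ pieces of $\Phi(h_i)$ commute past $X_i$ and are absorbed), monomials $Y_1^{\alpha_1}\cdots Y_n^{\alpha_n}$ map to $X_1^{\alpha_1}\cdots X_n^{\alpha_n}$. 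Hence \eqref{pbwbasis-ac-hrn-add} spans $\widehat{H}_{r,n}^{\mathcal{R}}$ over $\mathcal{R}$, and linear independence over $\mathcal{A}$ follows because $\Phi$ is an isomorphism and \eqref{pbwbasis-ac} is an $\mathcal{R}$-basis; descending from $\mathcal{R}$-independence to $\mathcal{A}$-independence uses that $\mathcal{R}$ is free (indeed faithfully flat) over $\mathcal{A}$.

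The main obstacle I anticipate is the bookkeeping in verifying that $\Phi(h_1)$ behaves like $g_1$ in the quadratic and affine braid relations: $\Phi(h_1)$ is not literally $g_1$ but rather $g_1$ twisted by the idempotents $1_\lambda$ according to the column-combinatorics of Lemma~\ref{yokonuma-hecke-bijection}, and one has to track how these idempotents commute with $X_1$ (trivially, by \eqref{rel-def-Y2-ac-another}) and interact with $g_1$ (via $g_11_\lambda=1_{s_1(\lambda)}g_1$). Once the dictionary ``$e_i \leftrightarrow \sum_{\lambda\mid s_i\in W_\lambda}1_\lambda$'' is in place — which is exactly what the remark after Lemma~\ref{yokonuma-hecke-bijection} records — every relation in \eqref{rel-def-Y2-ac} matches a relation in \eqref{rel-def-Y2-ac-another} essentially formally, so the only real work is this one identification, and everything else is routine verification. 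I would therefore state this as a short proof deferring the generator-by-generator relation check to ``a direct computation using Lemma~\ref{isom-YeckeHeck-alge-ac} and the remark following Lemma~\ref{yokonuma-hecke-bijection}.''
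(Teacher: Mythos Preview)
Your proposal is correct and matches the paper's own approach, which is simply the one-sentence remark that the result follows by combining Lemma~\ref{isom-YeckeHeck-alge-ac} with the PBW basis~\eqref{pbwbasis-ac}; you have just spelled out the details. One simplification: under the isomorphism of Lemma~\ref{isom-YeckeHeck-alge-ac} one actually has $\Phi(h_i)=g_i$ on the nose (and $\Phi(t_j)=\sum_\lambda \lambda_j 1_\lambda$, whence $\Phi(e_i)=\sum_{\lambda\mid s_i\in W_\lambda}1_\lambda$), so the ``idempotent twisting'' you anticipate never arises and $\Phi(Y_i)=X_i$ is immediate from \eqref{X2n} and \eqref{X2n-add-add-addadd}.
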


\section{A construction of the algebra $\widehat{E}_{r,n}$}

In this section, we are largely inspired by the work of Lusztig in [L6] to give the construction of an algebra $\widehat{E}_{r,n}$, which is in fact isomorphic to a direct sum of matrix algebras with coefficients in tensor products of extended affine Hecke algebras of type $A$. Furthermore, we prove that $\widehat{E}_{r,n}$ satisfies these axiomatic properties $P_{1}$, $P_{2}$, $P_{3}$ and $P_{4}$ described in Section 2.2. Thus, it is an affine cellular algebra.

\subsection{A subalgebra of $\widehat{H}_{r,n}$}
We denote by $\underline{\mathfrak{s}}_{n}'$ a set of representatives for the $\mathfrak{S}_{n}$-orbits in $\underline{\mathfrak{s}}_{n}$ such that if $\lambda=(\lambda_{1},\ldots,\lambda_{n})\in \underline{\mathfrak{s}}_{n}',$ we must have $\lambda_{1}=\cdots=\lambda_{n_1}=\zeta_{1},$ $\lambda_{n_1+1}=\cdots=\lambda_{n_1+n_2}=\zeta_{2},\ldots,$ $\lambda_{n_1+\cdots+n_{r-1}+1}=\cdots=\lambda_{n_1+\cdots+n_{r-1}+n_{r}}=\zeta_{r}$ for some (uniquely determined) non-negative integers $n_1,n_2,\ldots,n_{r}$ satisfying $\sum_{1\leq i\leq r}n_{i}=n.$ In this subsection we fix such $\lambda\in \underline{\mathfrak{s}}_{n}'$ and its associated non-negative integers $n_1,n_2,\ldots,n_{r},$ and consider the algebra $1_{\lambda}\widehat{H}_{r,n}1_{\lambda}.$

Set $W: =\mathfrak{S}_{n}$ and $\widehat{W} :=\mathbb{Z}^{n}\rtimes W.$ It is well-known that $\widehat{W}$ is isomorphic to the \emph{extended affine Weyl group of type} $A,$ and we will not distinguish between them in the following. Let $\preceq$ be the \emph{Bruhat order} on $\widehat{W}.$ Let $\widehat{H}_{n}$ denote the \emph{extended affine Hecke algebra of type} $A$ associated to $\widehat{W},$ which is defined over $\mathcal{A}.$ Here we omit the definition of $\widehat{H}_{n}.$ It is well-known that $\widehat{H}_{n}$ has a Bernstein-Lusztig presentation and an Iwahori-Matsumoto presentation. Accordingly, $\widehat{H}_{n}$ has two different bases, which we will assume to be $\{Z_{1}^{a_{1}}\cdots Z_{n}^{a_{n}}T_{w}\:|\:a_{1},\ldots,a_{n}\in \mathbb{Z}\text{ and }w\in \mathfrak{S}_{n}\}$ and $\{T_{\widehat{w}}\:|\:\widehat{w}\in \widehat{W}\},$ respectively. Besides, $\widehat{H}_{n}$ has a third basis, that is, a \emph{Kazhdan-Lusztig basis} $\{c_{\widehat{w}}\:|\:\widehat{w}\in \widehat{W}\}.$

Recall that $W_{\lambda}=\{w\in \mathfrak{S}_{n}\:|\:w(\lambda)=\lambda\}.$ Since $\lambda\in \underline{\mathfrak{s}}_{n}',$ $W_{\lambda}$ is the \emph{Young subgroup} of $W,$ that is, $W_{\lambda}=\mathfrak{S}_{n_1}\times\mathfrak{S}_{n_2}\times\cdots\times\mathfrak{S}_{n_{r}}.$ Let $\{s_{i}\}$ be a set of generators of the Coxeter group $W_{\lambda}.$ Set $\widehat{W}_{\lambda} :=\mathbb{Z}^{n}\rtimes W_{\lambda}.$ It is well-known that $\widehat{W}_{\lambda}$ is isomorphic to a direct product of some extended affine Weyl groups of type $A,$ and we will not distinguish between them in the following. Since $\widehat{W}_{\lambda}$ is a subgroup of $\widehat{W},$ it inherits the Bruhat order $\preceq$ from $\widehat{W}.$

Set $\widehat{\mathfrak{S}}_{n_{i}} :=\mathbb{Z}^{n_{i}}\rtimes \mathfrak{S}_{n_{i}}$ for $1\leq i\leq r.$ Let $\widehat{H}_{n_{i}}$ denote the extended affine Hecke algebra of type $A$ associated to $\widehat{\mathfrak{S}}_{n_{i}},$ which is defined over $\mathcal{A}.$ We denote by $\widehat{H}_{\lambda}$ the $\mathcal{A}$-subalgebra of $\widehat{H}_{n}$ generated by the elements $Z_{1}^{a_{1}}\cdots Z_{n}^{a_{n}}T_{w}$ (with $a_{1},\ldots,a_{n}\in \mathbb{Z}\text{ and }w\in W_{\lambda}$), which coincides with the $\mathcal{A}$-subalgebra of $\widehat{H}_{n}$ generated by the elements $T_{\widehat{w}}$ (with $\widehat{w}\in \widehat{W}_{\lambda}$). We see that $\widehat{H}_{\lambda}$ is naturally isomorphic to $\widehat{H}_{n_{1}}\otimes \cdots\otimes \widehat{H}_{n_{r}},$ where the tensor products are over $\mathcal{A}.$ Moreover, $\widehat{H}_{\lambda}$ has a Kazhdan-Lusztig basis $\{c_{\widehat{w}}\:|\:\widehat{w}\in \widehat{W}_{\lambda}\}.$
\begin{lemma}\label{isomorphism-theorem-ac}
There exists an isomorphism of $\mathcal{A}$-algebras $\varphi: \widehat{H}_{\lambda}\longrightarrow 1_{\lambda}\widehat{H}_{r,n}1_{\lambda},$ which is given by
\begin{align*}
T_{s_{i}}\longmapsto g_{s_{i}}1_{\lambda}~\text{ for }s_{i}\in W_{\lambda},\quad Z_{j}^{\pm 1}\longmapsto X_{j}^{\pm 1}1_{\lambda}~\text{ for }1\leq j\leq n,\quad\text{and}\quad 1\longmapsto 1_{\lambda}.
\end{align*}
\end{lemma}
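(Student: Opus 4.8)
The plan is to check that $\varphi$ is a well-defined $\mathcal{A}$-algebra homomorphism and then that it is bijective by matching up $\mathcal{A}$-bases on the two sides. Two facts do essentially all the work: since $\lambda\in\underline{\mathfrak{s}}_{n}'$ we have $s_{i}(\lambda)=\lambda$ for every simple reflection $s_{i}\in W_{\lambda}$, so $g_{i}1_{\lambda}=1_{s_{i}(\lambda)}g_{i}=1_{\lambda}g_{i}$ by \eqref{rel-def-Y1-ac-another}; and $X_{j}1_{\lambda}=1_{\lambda}X_{j}$ for all $j$ by \eqref{rel-def-Y2-ac-another} and the identities recorded after \eqref{X2n-add-add-addadd}. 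Consequently $1_{\lambda}$ is the identity of the corner algebra $1_{\lambda}\widehat{H}_{r,n}1_{\lambda}$, the element $X_{j}1_{\lambda}$ is invertible there with inverse $X_{j}^{-1}1_{\lambda}$, and the family $\{g_{s_{i}}1_{\lambda},X_{j}^{\pm1}1_{\lambda}\}$ multiplies inside $1_{\lambda}\widehat{H}_{r,n}1_{\lambda}$ exactly as $\{g_{s_{i}},X_{j}^{\pm1}\}$ does in $\widehat{H}_{r,n}$.

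For well-definedness I would invoke the Bernstein--Lusztig presentation of $\widehat{H}_{\lambda}\cong\widehat{H}_{n_{1}}\otimes\cdots\otimes\widehat{H}_{n_{r}}$: it is generated by the $T_{s_{i}}$ ($s_{i}\in W_{\lambda}$) and $Z_{j}^{\pm1}$ ($1\leq j\leq n$), subject to the braid relations of $W_{\lambda}$ among the $T_{s_{i}}$, the quadratic relations $T_{s_{i}}^{2}=1+(q-q^{-1})T_{s_{i}}$, the commutativity of the $Z_{j}$, the relations $Z_{j+1}=T_{s_{j}}Z_{j}T_{s_{j}}$ whenever $s_{j}=(j,j+1)\in W_{\lambda}$, and $T_{s_{i}}Z_{j}=Z_{j}T_{s_{i}}$ for $j\notin\{i,i+1\}$. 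Each lifts: the braid relations for the $g_{s_{i}}1_{\lambda}$ reduce to those for the $g_{i}$; the quadratic relation follows from
$$(g_{i}1_{\lambda})^{2}=g_{i}^{2}1_{\lambda}=1_{\lambda}+(q-q^{-1})\Big(\sum_{\mu\,\mid\,s_{i}\in W_{\mu}}g_{i}1_{\mu}\Big)1_{\lambda}=1_{\lambda}+(q-q^{-1})g_{i}1_{\lambda},$$
the last equality because $1_{\mu}1_{\lambda}=\delta_{\mu\lambda}1_{\lambda}$ and $s_{i}\in W_{\lambda}$ makes $\mu=\lambda$ one of the surviving terms; commutativity of the $X_{j}1_{\lambda}$ is immediate; $Z_{j+1}=T_{s_{j}}Z_{j}T_{s_{j}}$ becomes $(g_{j}1_{\lambda})(X_{j}1_{\lambda})(g_{j}1_{\lambda})=g_{j}X_{j}g_{j}1_{\lambda}=X_{j+1}1_{\lambda}$ by \eqref{X2n-add-add-addadd}; and the remaining cross-relations come from $g_{i}X_{j}=X_{j}g_{i}$ for $j\neq i,i+1$. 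Hence $\varphi$ is a homomorphism with $\varphi(1)=1_{\lambda}$.

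For bijectivity I would use the PBW basis \eqref{pbwbasis-ac-hrn-add} of $\widehat{H}_{r,n}$. For a basis element $X_{1}^{\alpha_{1}}\cdots X_{n}^{\alpha_{n}}1_{\mu}g_{w}$, moving $1_{\lambda}$ past the $X$'s and using $g_{w}1_{\lambda}=1_{w(\lambda)}g_{w}$ gives
$$1_{\lambda}\big(X_{1}^{\alpha_{1}}\cdots X_{n}^{\alpha_{n}}1_{\mu}g_{w}\big)1_{\lambda}=\delta_{\lambda\mu}\,\delta_{\lambda,\,w(\lambda)}\,X_{1}^{\alpha_{1}}\cdots X_{n}^{\alpha_{n}}1_{\lambda}g_{w},$$
and since $\lambda\in\underline{\mathfrak{s}}_{n}'$ the condition $w(\lambda)=\lambda$ is equivalent to $w\in W_{\lambda}$. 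So $1_{\lambda}\widehat{H}_{r,n}1_{\lambda}$ is $\mathcal{A}$-spanned by $\mathcal{S}:=\{X_{1}^{\alpha_{1}}\cdots X_{n}^{\alpha_{n}}1_{\lambda}g_{w}\mid\alpha_{j}\in\mathbb{Z},\ w\in W_{\lambda}\}$, which is a subset of the PBW basis \eqref{pbwbasis-ac-hrn-add} and hence $\mathcal{A}$-free; thus $\mathcal{S}$ is an $\mathcal{A}$-basis of $1_{\lambda}\widehat{H}_{r,n}1_{\lambda}$. On the other side, $\{Z_{1}^{\alpha_{1}}\cdots Z_{n}^{\alpha_{n}}T_{w}\mid\alpha_{j}\in\mathbb{Z},\ w\in W_{\lambda}\}$ is an $\mathcal{A}$-basis of $\widehat{H}_{\lambda}$ (the stated basis of $\widehat{H}_{n}$ restricted to $w\in W_{\lambda}$), and $\varphi$ sends it bijectively onto $\mathcal{S}$: for $w=s_{i_{1}}\cdots s_{i_{k}}$ reduced in $W_{\lambda}$ one has $\varphi(T_{w})=(g_{s_{i_{1}}}1_{\lambda})\cdots(g_{s_{i_{k}}}1_{\lambda})=g_{w}1_{\lambda}$ because $1_{\lambda}$ commutes with each $g_{s_{i_{j}}}$, whence $\varphi(Z_{1}^{\alpha_{1}}\cdots Z_{n}^{\alpha_{n}}T_{w})=X_{1}^{\alpha_{1}}\cdots X_{n}^{\alpha_{n}}1_{\lambda}g_{w}$. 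Therefore $\varphi$ is an $\mathcal{A}$-module isomorphism, hence an algebra isomorphism.

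\textbf{Main obstacle.} The only step that is not pure bookkeeping is well-definedness: one must pin down the exact Bernstein--Lusztig presentation of $\widehat{H}_{\lambda}\cong\widehat{H}_{n_{1}}\otimes\cdots\otimes\widehat{H}_{n_{r}}$ and verify that \emph{every} defining relation --- in particular the cross-relations between the $T_{s_{i}}$ and the $Z_{j}$ --- passes to the corner algebra. Every such verification reduces to the two facts noted at the outset ($s_{i}(\lambda)=\lambda$, giving $1_{\lambda}g_{s_{i}}=g_{s_{i}}1_{\lambda}$, and $X_{j}1_{\lambda}=1_{\lambda}X_{j}$); granting those, the bijectivity step is routine manipulation of the PBW basis \eqref{pbwbasis-ac-hrn-add}.
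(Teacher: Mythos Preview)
Your proof is correct and follows exactly the approach the paper has in mind; indeed the paper's own proof consists of two sentences asserting that $\varphi$ is ``obviously'' bijective as an $\mathcal{A}$-linear map and that the homomorphism verification ``can be easily verified'' and is left as an exercise. You have carried out that exercise in full: checking the Bernstein--Lusztig relations on the images (using $s_{i}(\lambda)=\lambda$ for $s_{i}\in W_{\lambda}$ and the centrality of $1_{\lambda}$ with respect to the $X_{j}$), and matching the PBW basis $\{Z_{1}^{\alpha_{1}}\cdots Z_{n}^{\alpha_{n}}T_{w}\}_{w\in W_{\lambda}}$ of $\widehat{H}_{\lambda}$ with the basis $\{X_{1}^{\alpha_{1}}\cdots X_{n}^{\alpha_{n}}1_{\lambda}g_{w}\}_{w\in W_{\lambda}}$ of the corner algebra extracted from \eqref{pbwbasis-ac-hrn-add}.
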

\begin{proof}
The map $\varphi$ is obviously a bijective $\mathcal{A}$-linear map. It suffices to show that $\varphi$ is an $\mathcal{A}$-algebra homomorphism, which can be easily verified. We leave it as an exercise to the reader.
\end{proof}

For each $\widehat{w}\in \widehat{W}_{\lambda},$ we denote by $g_{\widehat{w}, \lambda}$ the image of the generator $T_{\widehat{w}}$ of $\widehat{H}_{\lambda}$ under $\varphi$ in Lemma \ref{isomorphism-theorem-ac}. Then $\{g_{\widehat{w}, \lambda}\}_{\widehat{w}\in \widehat{W}_{\lambda}}$ is an $\mathcal{A}$-basis of $1_{\lambda}\widehat{H}_{r,n}1_{\lambda}.$ Recall that there exists a unique ring involution $\bm{\overline{\hspace{0.075cm}\cdot\hspace{0.075cm}}}$ on $\widehat{H}_{\lambda}$ such that $\overline{T_{\widehat{w}}}=T_{\widehat{w}^{-1}}^{-1}$ for each $\widehat{w}\in \widehat{W}_{\lambda}$ and $\overline{q^{m}}=q^{-m}.$ This induces an involution $\bm{\overline{\hspace{0.075cm}\cdot\hspace{0.075cm}}}$ on $1_{\lambda}\widehat{H}_{r,n}1_{\lambda}$ such that $\overline{\varphi(h)}=\varphi(\overline{h})$ for any $h\in \widehat{H}_{\lambda}.$

From the above definition, for any $\widehat{w}\in \widehat{W}_{\lambda}$ we have
\begin{align*}
\overline{g_{\widehat{w}, \lambda}}-g_{\widehat{w}, \lambda}\in \sum_{\widehat{y}\in \widehat{W}_{\lambda}; \widehat{y}\prec \widehat{w}}\mathcal{A}g_{\widehat{y}, \lambda}.
\end{align*}
By an argument similar to the one in [L5, $\S5.2$], we see that for any $\widehat{w}\in \widehat{W}_{\lambda},$ there exists a unique element $c_{\widehat{w}, \lambda}\in 1_{\lambda}\widehat{H}_{r,n}1_{\lambda}$ such that $\overline{c_{\widehat{w}, \lambda}}=c_{\widehat{w}, \lambda},$ and
\begin{align*}
c_{\widehat{w}, \lambda}-g_{\widehat{w}, \lambda}\in \sum_{\widehat{y}\in \widehat{W}_{\lambda}; \widehat{y}\neq \widehat{w}}q^{-1}\mathbb{Z}[q^{-1}]g_{\widehat{y}, \lambda}.
\end{align*}
In fact, the set $\{c_{\widehat{w}, \lambda}\}_{\widehat{w}\in \widehat{W}_{\lambda}}$ is also an $\mathcal{A}$-basis of $1_{\lambda}\widehat{H}_{r,n}1_{\lambda}.$ Moreover, we have $c_{\widehat{w}, \lambda}=\varphi(c_{\widehat{w}})$ for any $\widehat{w}\in \widehat{W}_{\lambda}.$

\subsection{A construction of the algebra $\widehat{E}_{r,n}$}
For each $\lambda\in \underline{\mathfrak{s}}_{n}$ we define $\lambda^{0}\in \underline{\mathfrak{s}}_{n}'$ by $\lambda^{0}\in W\lambda.$ We set
\begin{align*}
\Gamma :=\big\{(\lambda_{1}, \lambda_{2})\in \underline{\mathfrak{s}}_{n}\times \underline{\mathfrak{s}}_{n}\:|\:W\lambda_{1}=W\lambda_{2}\big\}.
\end{align*}

Let $\widehat{E}_{r,n}$ be the set of all formal sums $x=\sum_{(\lambda_{1}, \lambda_{2})\in \Gamma}x_{\lambda_{1}, \lambda_{2}},$ where $x_{\lambda_{1}, \lambda_{2}}\in 1_{\lambda_{1}^{0}}\widehat{H}_{r,n}1_{\lambda_{2}^{0}}.$ Then $\widehat{E}_{r,n}$ is naturally an $\mathcal{A}$-algebra, where the product $xy$ of two elements $x,y\in \widehat{E}_{r,n}$ is given by $(xy)_{\lambda_{1}, \lambda_{2}}=\sum_{\tilde{\lambda}\in W\lambda_{1}}x_{\lambda_{1}, \tilde{\lambda}}y_{\tilde{\lambda}, \lambda_{2}}.$ This algebra $\widehat{E}_{r,n}$ has a unit element, namely the element $1$ such that $1_{\lambda_{1}, \lambda_{2}}=\delta_{\lambda_{1}, \lambda_{2}}1_{\lambda_{1}}$ for any $(\lambda_{1}, \lambda_{2})\in \Gamma.$ Note that for each $(\lambda_{1}, \lambda_{2})\in \Gamma,$ we have $\lambda_{1}^{0}=\lambda_{2}^{0}.$ Thus, we can define a ring involution $\bm{\overline{\hspace{0.075cm}\cdot\hspace{0.075cm}}}: \widehat{E}_{r,n}\rightarrow \widehat{E}_{r,n}$ by $x\mapsto \overline{x}$, where $\overline{x}_{\lambda_{1}, \lambda_{2}}=\overline{x_{\lambda_{1}, \lambda_{2}}}$ for any $(\lambda_{1}, \lambda_{2})\in \Gamma.$

Recall that each element $\widehat{w}$ of $\widehat{W}$ can be uniquely written as the form $\lambda w$, where $\lambda\in \mathbb{Z}^{n}$ and $w\in W$. In what follows, for each element $\widehat{w}\in \widehat{W}$, we will denote by $w$ the corresponding element belonging to $W$ under the above decomposition of $\widehat{w}$.

Let $C=\{(\lambda_{1}, \lambda_{2}, \widehat{w})\in \underline{\mathfrak{s}}_{n}\times \underline{\mathfrak{s}}_{n}\times \widehat{W}\:|\:w\lambda_{1}^{0}=\lambda_{1}^{0}=\lambda_{2}^{0}\}.$ For each $(\lambda_{1}, \lambda_{2}, \widehat{w})\in C,$ we define $x^{\lambda_{1}, \lambda_{2}; \widehat{w}}\in \widehat{E}_{r,n}$ by
\begin{align*}
x^{\lambda_{1}, \lambda_{2}; \widehat{w}}_{\lambda_{1}', \lambda_{2}'}=\delta_{(\lambda_{1}', \lambda_{2}'), (\lambda_{1}, \lambda_{2})}g_{\widehat{w}, \lambda_{1}^{0}}.
\end{align*}
Then $\{x^{\lambda_{1}, \lambda_{2}; \widehat{w}}\:|\:(\lambda_{1}, \lambda_{2}, \widehat{w})\in C\}$ forms an $\mathcal{A}$-basis of $\widehat{E}_{r,n}.$ From the definition, for each $(\lambda_{1}, \lambda_{2}, \widehat{w})\in C$ we have
\begin{align*}
\overline{x^{\lambda_{1}, \lambda_{2}; \widehat{w}}}-x^{\lambda_{1}, \lambda_{2}; \widehat{w}}\in \sum_{\widehat{y}\in \widehat{W}_{\lambda_{1}^{0}}; \widehat{y}\prec \widehat{w}}\mathcal{A}x^{\lambda_{1}, \lambda_{2}; \widehat{y}}.
\end{align*}
By an argument similar to the one in [L5, $\S5.2$], we see that for any $(\lambda_{1}, \lambda_{2}, \widehat{w})\in C,$ there exists a unique element $c^{\lambda_{1}, \lambda_{2}; \widehat{w}}\in \widehat{E}_{r,n}$ such that $\overline{c^{\lambda_{1}, \lambda_{2}; \widehat{w}}}=c^{\lambda_{1}, \lambda_{2}; \widehat{w}},$ and
\begin{align*}
c^{\lambda_{1}, \lambda_{2}; \widehat{w}}-x^{\lambda_{1}, \lambda_{2}; \widehat{w}}\in \sum_{\widehat{y}\in \widehat{W}_{\lambda_{1}^{0}}; \widehat{y}\neq \widehat{w}}q^{-1}\mathbb{Z}[q^{-1}]x^{\lambda_{1}, \lambda_{2}; \widehat{y}}.
\end{align*}
In fact, the set $\{c^{\lambda_{1}, \lambda_{2}; \widehat{w}}\:|\:(\lambda_{1}, \lambda_{2}, \widehat{w})\in C\}$ is also an $\mathcal{A}$-basis of $\widehat{E}_{r,n}.$ Moreover, there exists a unique $\mathcal{A}$-involution $i$ on $\widehat{E}_{r,n}$ such that $i(c^{\lambda_{1}, \lambda_{2}; \widehat{w}})=c^{\lambda_{2}, \lambda_{1}; \widehat{w}^{-1}}.$
\begin{theorem}\label{ern-properties}
The $\mathcal{A}$-algebra $\widehat{E}_{r,n}$ with the basis $\{c^{\lambda_{1}, \lambda_{2}; \widehat{w}}\:|\:(\lambda_{1}, \lambda_{2}, \widehat{w})\in C\}$ and with the $\mathcal{A}$-involution $i$ satisfies properties $P_{1}$, $P_{2}$, $P_{3}$ and $P_{4}$ presented in Section 2.2.
\end{theorem}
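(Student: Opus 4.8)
The plan is to exploit the block structure of $\widehat{E}_{r,n}$ so as to reduce the verification of $P_{1}$--$P_{4}$ to the known cell theory of a single extended affine Hecke algebra of type $A$, and then to transport Lusztig's results through a matrix construction and a tensor product. First I would record the block decomposition: sorting the pairs $(\lambda_{1},\lambda_{2})\in\Gamma$ by their common orbit $\mathcal{O}=W\lambda_{1}=W\lambda_{2}$ and using the multiplication rule of $\widehat{E}_{r,n}$, one obtains a direct sum of two-sided ideals $\widehat{E}_{r,n}=\bigoplus_{\mathcal{O}}\widehat{E}^{\mathcal{O}}$, where $\widehat{E}^{\mathcal{O}}$ is the span of $\{c^{\lambda_{1},\lambda_{2};\widehat{w}}\:|\:\lambda_{1},\lambda_{2}\in\mathcal{O}\}$. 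All entries of the elements of $\widehat{E}^{\mathcal{O}}$ lie in the single algebra $1_{\lambda^{0}}\widehat{H}_{r,n}1_{\lambda^{0}}$, where $\lambda^{0}\in\underline{\mathfrak{s}}_{n}'$ represents $\mathcal{O}$ and has associated composition $(n_{1},\dots,n_{r})$; hence the product on $\widehat{E}^{\mathcal{O}}$ is ordinary matrix multiplication and Lemma~\ref{isomorphism-theorem-ac} identifies $\widehat{E}^{\mathcal{O}}\cong M_{|\mathcal{O}|}(\widehat{H}_{\lambda^{0}})$ with $\widehat{H}_{\lambda^{0}}\cong\widehat{H}_{n_{1}}\otimes\cdots\otimes\widehat{H}_{n_{r}}$. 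Since $\varphi$ intertwines the bar involutions, under this identification $c^{\lambda_{1},\lambda_{2};\widehat{w}}$ corresponds to the matrix unit $E_{\lambda_{1},\lambda_{2}}(c_{\widehat{w}})$, with $\{c_{\widehat{w}}\}$ the Kazhdan-Lusztig basis of $\widehat{H}_{\lambda^{0}}$, and $i$ becomes $E_{\lambda_{1},\lambda_{2}}(c_{\widehat{w}})\mapsto E_{\lambda_{2},\lambda_{1}}(c_{\widehat{w}^{-1}})$. The diagonal idempotents $c^{\mu,\mu;\widehat{1}}$ ($\mu\in\underline{\mathfrak{s}}_{n}$) form a generalized unit compatible with this basis and restricting to each block, and the $2$-cells, the partial order, the $a$-function, the asymptotic rings $A_{\mathbf{c}}^{\infty}$ and the pairs $(G_{\mathbf{c}},\mathcal{D}_{\mathbf{c}})$ are all intrinsic to a single block (for $P_{1}(a)$ one takes the disjoint union of the posets of the blocks). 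So it suffices to prove $P_{1}$--$P_{4}$ for one $M_{|\mathcal{O}|}(\widehat{H}_{\lambda^{0}})$ with its matrix-times-Kazhdan-Lusztig basis.

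Next I would peel off the matrix and tensor constructions, both mechanical in the spirit of Lemma~\ref{lemma-tensor-products-aff} and essentially already present in [C2]. If an algebra with a distinguished basis and involution satisfies $P_{1}$--$P_{4}$, then so does $M_{m}$ of it with basis $\{E_{ab}(b')\}$: its $2$-cells are $\{E_{ab}(b')\:|\:b'\in\mathbf{c}\}$, one for each original $2$-cell $\mathbf{c}$; the $a$-function is read off the entry; a matrix $2$-cell has asymptotic ring $M_{m}(A_{\mathbf{c}}^{\infty})\cong M_{mn_{\mathbf{c}}}(B_{\mathbf{c}})$; and the involution and the bijection in $P_{4}(a)$ transport verbatim, with $G_{\mathbf{c}}$ unchanged. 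Likewise, a tensor product of algebras satisfying $P_{1}$--$P_{4}$ satisfies them, since two-sided cells, distinguished basis elements and asymptotic rings multiply over the factors, the $a$-function adds, and $M_{a_{1}}(R(G_{1}))\otimes\cdots\otimes M_{a_{r}}(R(G_{r}))\cong M_{a_{1}\cdots a_{r}}(R(G_{1}\times\cdots\times G_{r}))$ with $G_{1}\times\cdots\times G_{r}$ again reductive. This reduces matters to verifying $P_{1}$--$P_{4}$ for a single extended affine Hecke algebra $\widehat{H}_{m}$ of type $A$ equipped with its Kazhdan-Lusztig basis and the anti-involution $c_{\widehat{w}}\mapsto c_{\widehat{w}^{-1}}$.

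This final step is where Lusztig's theory is used, and is the core of the argument. What is needed is: $\widehat{\mathfrak{S}}_{m}$ has finitely many two-sided cells, indexed by a poset with finitely many elements below each, and the $a$-function is finite, bounded, and constant on two-sided cells ($\Rightarrow P_{1}$); each two-sided cell $\mathbf{e}$ contains a finite set $\mathcal{D}_{\mathbf{e}}$ of distinguished involutions with $\sum_{d\in\mathcal{D}_{\mathbf{e}}}t_{d}$ a unit of the asymptotic ring $J_{\mathbf{e}}$ ($\Rightarrow P_{2}$); the Kazhdan-Lusztig structure constants of $\widehat{H}_{m}$ and their leading coefficients $\gamma$ satisfy the associativity-type identities of $P_{3}$, a form of Lusztig's theorem that $J$ acts on $\widehat{H}_{m}$ through a homomorphism ($\Rightarrow P_{3}$); and $J_{\mathbf{e}}\cong M_{|\mathcal{D}_{\mathbf{e}}|}(R(G_{\mathbf{e}}))$ for a connected reductive group $G_{\mathbf{e}}$ attached to $\mathbf{e}$, with $c_{\widehat{w}}\mapsto c_{\widehat{w}^{-1}}$ inducing $E_{jl}(s)\mapsto E_{lj}(\sigma(s))$ in suitable labels ($\Rightarrow P_{4}$). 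The cell formalism, the boundedness of $a$, and the identities in $P_{3}$ are due to Lusztig ([L4]--[L6]), and the explicit description of the based rings in type $A$ --- two-sided cells corresponding to partitions, hence to nilpotent orbits in $\mathfrak{gl}_{m}$, with $G_{\mathbf{e}}$ the reductive part of the corresponding centralizer --- is the affine counterpart of the computation underlying [L6, \S34] used in the finite case in [JPA]. I expect the main obstacle to be exactly this last ingredient: correctly invoking the matrix/reductive-group description of the $J_{\mathbf{e}}$ and the behaviour of the anti-involution on them for $\widehat{H}_{m}$, and matching the resulting combinatorial labelling with the index set $C$ of $\widehat{E}_{r,n}$, so that the bijection $\mathbf{c}\leftrightarrow\{(j,l,s)\}$ required by $P_{4}(a)$ is the intended one and is intertwined by $i$.
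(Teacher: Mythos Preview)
Your approach is essentially the same as the paper's: both use the block decomposition \eqref{the-canonical-decomposition} to reduce to $1_{\lambda}\widehat{H}_{r,n}1_{\lambda}\cong\widehat{H}_{\lambda}$ via Lemma~\ref{isomorphism-theorem-ac}, then use the tensor decomposition $\widehat{H}_{\lambda}\cong\widehat{H}_{n_{1}}\otimes\cdots\otimes\widehat{H}_{n_{r}}$ to reduce to a single $\widehat{H}_{m}$, and finally invoke known results. The paper is terser than you are on the matrix and tensor reductions (it simply asserts them), while you spell out how the $2$-cells, $a$-function, asymptotic rings and $(G_{\mathbf{c}},\mathcal{D}_{\mathbf{c}})$ behave under these constructions.

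The only point worth sharpening is your final citation package. For $P_{1}$--$P_{3}$ the paper invokes [L1--3, Sh] (Lusztig's cell papers for type $\widetilde{A}$ together with Shi's paper on the partial order on two-sided cells), and for $P_{4}$ it invokes [Xi1], Xi's memoir giving the explicit matrix-over-$R(G_{\mathbf{e}})$ description of the based ring of each two-sided cell of $\widehat{\mathfrak{S}}_{m}$. Your references to [L4]--[L6] and to an ``affine counterpart of [L6, \S34]'' point in the right direction but are not quite the correct sources: the crucial $P_{4}$ input in type $A$ is Xi's result, and you should cite it rather than leave it as an expected obstacle.
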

\begin{proof}
For each $\lambda\in \underline{\mathfrak{s}}_{n}',$ let $n_{\lambda}=|W\lambda|$ and let Mat$_{n_{\lambda}}(1_{\lambda}\widehat{H}_{r,n}1_{\lambda})$ be the algebra of $n_{\lambda}\times n_{\lambda}$ matrices with coefficients in $1_{\lambda}\widehat{H}_{r,n}1_{\lambda}.$ From the definition of $\widehat{E}_{r,n}$, we have the following decomposition:
\begin{align}\label{the-canonical-decomposition}
\widehat{E}_{r,n}=\bigoplus_{\lambda\in \underline{\mathfrak{s}}_{n}'}\mathrm{Mat}_{n_{\lambda}}(1_{\lambda}\widehat{H}_{r,n}1_{\lambda}),
\end{align}
which is compatible with the algebra structures and with the natural bases. Using this fact, we can easily get that this theorem is reduced to the similar statement for each algebra $1_{\lambda}\widehat{H}_{r,n}1_{\lambda}$ (with $\lambda\in \underline{\mathfrak{s}}_{n}'$). By Lemma \ref{isomorphism-theorem-ac}, it suffices to verify the properties for each algebra $\widehat{H}_{\lambda}$ (with $\lambda\in \underline{\mathfrak{s}}_{n}'$).

Recall that associated with each $\lambda\in \underline{\mathfrak{s}}_{n}',$ we have some non-negative integers $n_1,n_2,\ldots,$ $n_{r}$ such that $\widehat{H}_{\lambda}\cong\widehat{H}_{n_{1}}\otimes \cdots\otimes \widehat{H}_{n_{r}},$ where each $\widehat{H}_{n_{i}}$ ($1\leq i\leq r$) is the extended affine Hecke algebra of type $A.$ It is easy to see that in order to show that $\widehat{H}_{\lambda}$ satisfies the properties, it suffices to verify the properties for each algebra $\widehat{H}_{n_{i}}$ ($1\leq i\leq r$). It follows from [L1-3, Sh] that the algebra $\widehat{H}_{n_{i}}$ satisfies properties $P_{1}$, $P_{2}$ and $P_{3}.$ The fact that the algebra $\widehat{H}_{n_{i}}$ satisfies the property $P_{4}$ follows from [Xi1]. We are done.
\end{proof}

\section{Affine cellularity of affine Yokonuma-Hecke algebras}

Lusztig [L6, $\S34.10$] proved that the unipotent Hecke algebra is isomorphic to a direct sum of matrix algebras. In particular, it can be deduced that the Yokonuma-Hecke algebra is isomorphic to a direct sum of matrix algebras with coefficients in tensor products of Hecke algebras of type A (see also [JPA]). In this section we shall follow Lusztig's approach to prove that the algebra $\widehat{H}_{r,n}$ is isomorphic to the algebra $\widehat{E}_{r,n},$ which are constructed in Sections 3 and 4, respectively. As a consequence, we can show that $\widehat{H}_{r,n},$ and further $\widehat{Y}_{r,n},$ is an affine cellular algebra.

\begin{theorem}\label{iso-theo-ern-yrn-ac}
We have an $\mathcal{A}$-algebra isomorphism $\widehat{H}_{r,n}\overset{\sim}{\longrightarrow}\widehat{E}_{r,n}.$
\end{theorem}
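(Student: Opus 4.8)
The goal is to construct an explicit $\mathcal{A}$-algebra isomorphism $\Theta\colon \widehat{H}_{r,n}\to \widehat{E}_{r,n}$. The model for this is Lusztig's treatment in [L6, \S34], and the key structural input is the decomposition \eqref{the-canonical-decomposition}, $\widehat{E}_{r,n}=\bigoplus_{\lambda\in \underline{\mathfrak{s}}_{n}'}\mathrm{Mat}_{n_{\lambda}}(1_{\lambda}\widehat{H}_{r,n}1_{\lambda})$, together with Theorem \ref{isom-YeckeHeck-alge-ac-add}, which gives the PBW basis \eqref{pbwbasis-ac-hrn-add} of $\widehat{H}_{r,n}$. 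First I would note that the relation $\sum_{\lambda\in\underline{\mathfrak{s}}_n}1_\lambda=1$ gives a Peirce-type decomposition $\widehat{H}_{r,n}=\bigoplus_{\lambda_1,\lambda_2\in\underline{\mathfrak{s}}_n}1_{\lambda_1}\widehat{H}_{r,n}1_{\lambda_2}$. The generator relation $g_i1_\lambda=1_{s_i(\lambda)}g_i$, iterated, shows that $g_w1_\lambda=1_{w(\lambda)}g_w$, and likewise $X_j^{\pm1}$ commutes with every $1_\lambda$; hence a basis element $X_1^{\alpha_1}\cdots X_n^{\alpha_n}1_\lambda g_w$ lies in $1_\lambda\widehat{H}_{r,n}1_{w^{-1}(\lambda)}$. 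In particular $1_{\lambda_1}\widehat{H}_{r,n}1_{\lambda_2}=0$ unless $W\lambda_1=W\lambda_2$, which matches exactly the index set $\Gamma$ defining $\widehat{E}_{r,n}$.

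The heart of the argument is to identify each nonzero block $1_{\lambda_1}\widehat{H}_{r,n}1_{\lambda_2}$ (for $(\lambda_1,\lambda_2)\in\Gamma$) with $1_{\lambda_1^0}\widehat{H}_{r,n}1_{\lambda_2^0}$ as a bimodule over the corner algebras. Fix $\lambda\in\underline{\mathfrak{s}}_n'$ and, for each $\mu\in W\lambda$, choose a minimal-length $d_\mu\in W$ with $d_\mu(\lambda)=\mu$; then $1_\mu=g_{d_\mu}1_\lambda g_{d_\mu}^{-1}$ (one should check $g_{d_\mu}$ is a unit between these corners, using invertibility of the $g_i$). The maps $1_{\lambda_1}\widehat{H}_{r,n}1_{\lambda_2}\to 1_\lambda\widehat{H}_{r,n}1_\lambda$, $z\mapsto g_{d_{\lambda_1}}^{-1}z\,g_{d_{\lambda_2}}$, are then $\mathcal{A}$-linear isomorphisms, and assembling them over all $\mu,\mu'\in W\lambda$ realizes $\bigoplus_{\mu,\mu'\in W\lambda}1_\mu\widehat{H}_{r,n}1_{\mu'}\cong\mathrm{Mat}_{n_\lambda}(1_\lambda\widehat{H}_{r,n}1_\lambda)$. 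Summing over $\lambda\in\underline{\mathfrak{s}}_n'$ and comparing with \eqref{the-canonical-decomposition} yields the desired isomorphism. Concretely, $\Theta$ sends a PBW basis element $z=X_1^{\alpha_1}\cdots X_n^{\alpha_n}1_{\lambda_1}g_w$ (so $\lambda_2=w^{-1}(\lambda_1)$) to the element of $\widehat{E}_{r,n}$ whose single nonzero component, in position $(\lambda_1,\lambda_2)$ of the $\lambda_1^0$-block, equals $g_{d_{\lambda_1}}^{-1}z\,g_{d_{\lambda_2}}\in 1_{\lambda_1^0}\widehat{H}_{r,n}1_{\lambda_2^0}$; one should also cross-check that the images of $X_j^{\pm1}1_\lambda$ correspond to the generators $Z_j^{\pm1}1_{\lambda^0}$ under $\varphi$ of Lemma \ref{isomorphism-theorem-ac}, so that the construction is compatible with that earlier identification.

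The main obstacle is verifying that $\Theta$ is multiplicative rather than merely bijective and $\mathcal{A}$-linear. Bijectivity is easy once one matches bases: the PBW basis \eqref{pbwbasis-ac-hrn-add} is carried, up to the explicit unit conjugations, onto the natural basis $\{x^{\lambda_1,\lambda_2;\widehat{w}}\}$ of $\widehat{E}_{r,n}$. For multiplicativity, given $z\in 1_{\lambda_1}\widehat{H}_{r,n}1_{\lambda_2}$ and $z'\in 1_{\mu_1}\widehat{H}_{r,n}1_{\mu_2}$, the product $zz'$ vanishes unless $\lambda_2=\mu_1$, and in that case $\Theta(zz')$ should land in position $(\lambda_1,\mu_2)$; on the $\widehat{E}_{r,n}$ side the product $\Theta(z)\Theta(z')$ is computed by the matrix-multiplication rule $(xy)_{\lambda_1,\mu_2}=\sum_{\tilde\lambda\in W\lambda_1}x_{\lambda_1,\tilde\lambda}y_{\tilde\lambda,\mu_2}$, which here collapses to the single term $\tilde\lambda=\lambda_2$, giving $(g_{d_{\lambda_1}}^{-1}z\,g_{d_{\lambda_2}})(g_{d_{\lambda_2}}^{-1}z'\,g_{d_{\mu_2}})=g_{d_{\lambda_1}}^{-1}zz'\,g_{d_{\mu_2}}=\Theta(zz')$ — the $g_{d_{\lambda_2}}g_{d_{\lambda_2}}^{-1}$ cancellation is exactly what the matrix rule and the choice of $\widehat{E}_{r,n}$'s multiplication are designed to reproduce, which is the content of Lusztig's construction. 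The one point requiring genuine care is that all the relevant $g_{d_\mu}$ really are two-sided units between the corners $1_\lambda\widehat{H}_{r,n}1_\lambda$ and $1_\mu\widehat{H}_{r,n}1_\mu$ (not merely invertible in $\widehat{H}_{r,n}$), and that the identification is independent of the chosen coset representatives $d_\mu$; both follow from the commutation relations but should be stated cleanly. Finally, since $\widehat{E}_{r,n}$ is affine cellular by Theorems \ref{ern-properties} and \ref{affine-cell-theorem}, transport of structure through $\Theta$ proves $\widehat{H}_{r,n}$, hence $\widehat{Y}_{r,n}$ via Theorem \ref{isom-YeckeHeck-alge-ac-add}, is affine cellular.
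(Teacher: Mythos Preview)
Your proposal is correct and follows essentially the same route as the paper. The paper also builds the isomorphism block by block via intertwiners: for each $\lambda$ it chooses a path of simple reflections from $\lambda$ to $\lambda^{0}$, sets $\tau_{\lambda}=g_{s_{1}}\cdots g_{s_{k}}$ and $\tau'_{\lambda}=g_{s_{k}}\cdots g_{s_{1}}$, and defines $\Psi(h)_{\lambda_{1},\lambda_{2}}=\tau_{\lambda_{1}}1_{\lambda_{1}}h\,1_{\lambda_{2}}\tau'_{\lambda_{2}}$, then writes down the inverse map explicitly and checks both compositions; the only cosmetic difference is that the paper uses the reversed product $\tau'_{\lambda}$ rather than the genuine inverse $g_{d_{\mu}}^{-1}$, proving the corner identity $1_{\lambda^{0}}\tau_{\lambda}\tau'_{\lambda}=1_{\lambda^{0}}$ from the relation $g_{i}1_{\mu}g_{i}=1_{s_{i}\mu}$ when $s_{i}\mu\neq\mu$ (equivalently $g_{i}1_{\mu}=g_{i}^{-1}1_{\mu}$ there), so on the relevant corners $\tau'_{\lambda}$ coincides with your $g_{d_{\lambda}}^{-1}$. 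Two small caveats: the isomorphism \emph{does} depend on the choice of the $d_{\mu}$ (the paper also fixes a choice), so ``independent of the chosen coset representatives'' should be weakened to ``any choice yields an isomorphism''; and your remark that $X_{j}^{\pm1}1_{\lambda}$ is sent to $Z_{j}^{\pm1}1_{\lambda^{0}}$ is not literally true, since conjugation by $g_{d_{\mu}}$ permutes the $X_{j}$, but this is irrelevant for the theorem itself.
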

\begin{proof}
For each $\lambda\in \underline{\mathfrak{s}}_{n},$ we choose a sequence $\mathbf{s}_{\lambda}=(s_{1}, s_{2},\ldots, s_{k})$, where each $s_{i}$ is a simple reflection in $\mathfrak{S}_{n},$ and $\lambda^{0}=s_{1}s_{2}\cdots s_{k}\lambda\neq s_{2}\cdots s_{k}\lambda\neq\cdots\neq s_{k}\lambda\neq \lambda,$ or equivalently, $\lambda=s_{k}\cdots s_{2}s_{1}\lambda^{0}\neq s_{k-1}\cdots s_{1}\lambda^{0}\neq\cdots \neq s_{1}\lambda^{0}\neq \lambda^{0}.$ We set
\begin{align*}
\tau_{\lambda} :=g_{s_{1}}g_{s_{2}}\cdots g_{s_{k}}\in \widehat{H}_{r,n}\text{ and }\tau'_{\lambda} :=g_{s_{k}}\cdots g_{s_{2}}g_{s_{1}}\in \widehat{H}_{r,n}.
\end{align*}

We show that
\begin{align}\label{align-alg-isomor-511-hrn-ern}
1_{\lambda^{0}}\tau_{\lambda}\tau_{\lambda}'=1_{\lambda^{0}}\quad \mathrm{and}\quad 1_{\lambda}\tau_{\lambda}'\tau_{\lambda}=1_{\lambda}.
\end{align}
By definition we have $\tau_{\lambda}1_{\lambda}=1_{\lambda^{0}}\tau_{\lambda}$ and $1_{\lambda}\tau_{\lambda}'=\tau_{\lambda}'1_{\lambda^{0}}.$ Thus, we get
\begin{align*}
1_{\lambda^{0}}\tau_{\lambda}\tau_{\lambda}'=\tau_{\lambda}1_{\lambda}\tau_{\lambda}'=g_{s_{1}}g_{s_{2}}\cdots g_{s_{k}}1_{\lambda}g_{s_{k}}\cdots g_{s_{2}}g_{s_{1}}.
\end{align*}
Since $s_{k}\lambda\neq \lambda,$ we can replace $g_{s_{k}}1_{\lambda}g_{s_{k}}$ with $1_{s_{k}\lambda}$ by \eqref{rel-def-Y1-ac-another}, and we obtain
\begin{align*}
1_{\lambda^{0}}\tau_{\lambda}\tau_{\lambda}'=g_{s_{1}}g_{s_{2}}\cdots g_{s_{k-1}}1_{s_{k}\lambda}g_{s_{k-1}}\cdots g_{s_{2}}g_{s_{1}}.
\end{align*}
Since $s_{k-1}s_{k}\lambda\neq s_{k}\lambda,$ we can replace $g_{s_{k-1}}1_{s_{k}\lambda}g_{s_{k-1}}$ with $1_{s_{k-1}s_{k}\lambda}$ by \eqref{rel-def-Y1-ac-another}, and we obtain
\begin{align*}
1_{\lambda^{0}}\tau_{\lambda}\tau_{\lambda}'=g_{s_{1}}g_{s_{2}}\cdots g_{s_{k-2}}1_{s_{k-1}s_{k}\lambda}g_{s_{k-2}}\cdots g_{s_{2}}g_{s_{1}}.
\end{align*}
Continuing in this way we get $$1_{\lambda^{0}}\tau_{\lambda}\tau_{\lambda}'=1_{s_{1}s_{2}\cdots s_{k}\lambda}=1_{\lambda^{0}}.$$ This proves the first identity in \eqref{align-alg-isomor-511-hrn-ern}. The second identity in \eqref{align-alg-isomor-511-hrn-ern} can be proved similarly.

We define an $\mathcal{A}$-linear map $\Psi: \widehat{H}_{r,n}\rightarrow \widehat{E}_{r,n}$ by
\begin{align*}
\Psi(h)_{\lambda_{1}, \lambda_{2}}=\tau_{\lambda_{1}}1_{\lambda_{1}}h1_{\lambda_{2}}\tau_{\lambda_{2}}'\in 1_{\lambda_{1}^{0}}\widehat{H}_{r,n}1_{\lambda_{2}^{0}}\quad \text{ for any }(\lambda_{1}, \lambda_{2})\in \Gamma.
\end{align*}
We show that $\Psi$ is a ring homomorphism. Let $h, h'\in \widehat{H}_{r,n}.$ We have
\begin{align*}
(\Psi(h)\Psi(h'))_{\lambda_{1}, \lambda_{2}}&=\sum_{\tilde{\lambda}\in W\lambda_{1}}\Psi(h)_{\lambda_{1}, \tilde{\lambda}}\Psi(h')_{\tilde{\lambda}, \lambda_{2}}\\
&=\sum_{\tilde{\lambda}\in W\lambda_{1}}\tau_{\lambda_{1}}1_{\lambda_{1}}h1_{\tilde{\lambda}}\tau_{\tilde{\lambda}}'
\tau_{\tilde{\lambda}}1_{\tilde{\lambda}}h'1_{\lambda_{2}}\tau_{\lambda_{2}}'\\
&=\sum_{\tilde{\lambda}\in W\lambda_{1}}\tau_{\lambda_{1}}1_{\lambda_{1}}h1_{\tilde{\lambda}}h'1_{\lambda_{2}}\tau_{\lambda_{2}}',
\end{align*}
where the last equality follows from \eqref{align-alg-isomor-511-hrn-ern}. Since $1_{\lambda_{1}}h1_{\tilde{\lambda}}=0$ if $\tilde{\lambda}\in \underline{\mathfrak{s}}_{n}-W\lambda_{1},$ we have
\begin{align*}
(\Psi(h)\Psi(h'))_{\lambda_{1}, \lambda_{2}}&=\tau_{\lambda_{1}}1_{\lambda_{1}}h\Big(\sum_{\tilde{\lambda}\in \underline{\mathfrak{s}}_{n}}1_{\tilde{\lambda}}\Big)h'1_{\lambda_{2}}\tau_{\lambda_{2}}'\\
&=\tau_{\lambda_{1}}1_{\lambda_{1}}hh'1_{\lambda_{2}}\tau_{\lambda_{2}}'\\
&=\Psi(hh')_{\lambda_{1}, \lambda_{2}}.
\end{align*}
Thus, we have $\Psi(h)\Psi(h')=\Psi(hh')$ as required.

We also define an $\mathcal{A}$-linear map $\Phi: \widehat{E}_{r,n}\rightarrow \widehat{H}_{r,n}$ by
\begin{align*}
\Phi\Big(\sum_{(\lambda_{1}, \lambda_{2})\in \Gamma}x_{\lambda_{1}, \lambda_{2}}\Big)=\sum_{(\lambda_{1}, \lambda_{2})\in \Gamma}\tau_{\lambda_{1}}'1_{\lambda_{1}^{0}}x_{\lambda_{1}, \lambda_{2}}1_{\lambda_{2}^{0}}\tau_{\lambda_{2}}.
\end{align*}
We show that $\Phi$ is a ring homomorphism. Let $x,$ $y\in \widehat{E}_{r,n}.$ We have
\begin{align*}
\Phi(xy)&=\Phi\Big(\sum_{(\lambda_{1}, \lambda_{2})\in \Gamma}(xy)_{\lambda_{1}, \lambda_{2}}\Big)\\
&=\sum_{(\lambda_{1}, \lambda_{2})\in \Gamma}\tau_{\lambda_{1}}'1_{\lambda_{1}^{0}}(xy)_{\lambda_{1}, \lambda_{2}}1_{\lambda_{2}^{0}}\tau_{\lambda_{2}}\\
&=\sum_{(\lambda_{1}, \lambda_{2})\in \Gamma}\tau_{\lambda_{1}}'1_{\lambda_{1}^{0}}\Big(\sum_{\tilde{\lambda}\in W\lambda_{1}}x_{\lambda_{1}, \tilde{\lambda}}y_{\tilde{\lambda}, \lambda_{2}}\Big)1_{\lambda_{2}^{0}}\tau_{\lambda_{2}},
\end{align*}
and
\begin{align*}
\Phi(x)\Phi(y)&=\Big(\sum_{(\lambda_{1}, \lambda_{2})\in \Gamma}\tau_{\lambda_{1}}'1_{\lambda_{1}^{0}}x_{\lambda_{1}, \lambda_{2}}1_{\lambda_{2}^{0}}\tau_{\lambda_{2}}\Big)\Big(\sum_{(\lambda_{1}', \lambda_{2}')\in \Gamma}\tau_{\lambda_{1}'}'1_{\lambda_{1}'^{0}}y_{\lambda_{1}', \lambda_{2}'}1_{\lambda_{2}'^{0}}\tau_{\lambda_{2}'}\Big)\\
&=\sum_{(\lambda_{1}, \lambda_{2}')\in \Gamma}\tau_{\lambda_{1}}'1_{\lambda_{1}^{0}}\Big(\sum_{\lambda_{2}\in W\lambda_{1}}x_{\lambda_{1}, \lambda_{2}}1_{\lambda_{2}^{0}}\tau_{\lambda_{2}}\tau_{\lambda_{2}}'1_{\lambda_{2}^{0}}y_{\lambda_{2}, \lambda_{2}'}\Big)1_{\lambda_{2}'^{0}}\tau_{\lambda_{2}'}\\
&=\sum_{(\lambda_{1}, \lambda_{2}')\in \Gamma}\tau_{\lambda_{1}}'1_{\lambda_{1}^{0}}\Big(\sum_{\lambda_{2}\in W\lambda_{1}}x_{\lambda_{1}, \lambda_{2}}y_{\lambda_{2}, \lambda_{2}'}\Big)1_{\lambda_{2}'^{0}}\tau_{\lambda_{2}'}.
\end{align*}
Thus, we have $\Phi(xy)=\Phi(x)\Phi(y)$ as required.

Next we want to show that $\Phi\circ\Psi(h)=h$ for any $h\in \widehat{H}_{r,n}$ and $\Psi\circ\Phi(x)=x$ for any $x\in \widehat{E}_{r,n}.$

We have
\begin{align*}
\Phi\circ\Psi(h)&=\Phi\Big(\sum_{(\lambda_{1}, \lambda_{2})\in \Gamma}\Psi(h)_{\lambda_{1}, \lambda_{2}}\Big)\\
&=\sum_{(\lambda_{1}, \lambda_{2})\in \Gamma}\tau_{\lambda_{1}}'1_{\lambda_{1}^{0}}\Psi(h)_{\lambda_{1}, \lambda_{2}}1_{\lambda_{2}^{0}}\tau_{\lambda_{2}}\\
&=\sum_{(\lambda_{1}, \lambda_{2})\in \Gamma}\tau_{\lambda_{1}}'1_{\lambda_{1}^{0}}\tau_{\lambda_{1}}1_{\lambda_{1}}h1_{\lambda_{2}}\tau_{\lambda_{2}}'1_{\lambda_{2}^{0}}\tau_{\lambda_{2}}\\
&=\sum_{(\lambda_{1}, \lambda_{2})\in \Gamma}1_{\lambda_{1}}h1_{\lambda_{2}}\\
&=h,
\end{align*}
where the last equality follows from the fact that $1_{\lambda_{1}}h1_{\lambda_{2}}=0$ whenever $(\lambda_{1}, \lambda_{2})\notin \Gamma.$

We have
\begin{align*}
\Psi\circ\Phi(x)&=\Psi\Big(\sum_{(\lambda_{1}, \lambda_{2})\in \Gamma}\tau_{\lambda_{1}}'1_{\lambda_{1}^{0}}x_{\lambda_{1}, \lambda_{2}}1_{\lambda_{2}^{0}}\tau_{\lambda_{2}}\Big)\\
&=\sum_{(\lambda_{1}', \lambda_{2}')\in \Gamma}\tau_{\lambda_{1}'}1_{\lambda_{1}'}\Big(\sum_{(\lambda_{1}, \lambda_{2})\in \Gamma}\tau_{\lambda_{1}}'1_{\lambda_{1}^{0}}x_{\lambda_{1}, \lambda_{2}}1_{\lambda_{2}^{0}}\tau_{\lambda_{2}}\Big)1_{\lambda_{2}'}\tau_{\lambda_{2}'}'\\
&=\sum_{(\lambda_{1}, \lambda_{2})\in \Gamma}\tau_{\lambda_{1}}1_{\lambda_{1}}\tau_{\lambda_{1}}'1_{\lambda_{1}^{0}}x_{\lambda_{1}, \lambda_{2}}1_{\lambda_{2}^{0}}\tau_{\lambda_{2}}1_{\lambda_{2}}\tau_{\lambda_{2}}'\\
&=\sum_{(\lambda_{1}, \lambda_{2})\in \Gamma}1_{\lambda_{1}^{0}}x_{\lambda_{1}, \lambda_{2}}1_{\lambda_{2}^{0}}\\
&=x.
\end{align*}

We have proved this theorem.
\end{proof}

Recall that $\mathcal{R}=\mathbb{Z}[\frac{1}{r}][q,q^{-1},\zeta].$ Set $\widehat{E}_{r,n}^{\mathcal{R}} :=\mathcal{R}\otimes_{\mathcal{A}} \widehat{E}_{r,n}$ and $\widehat{H}_{\lambda}^{\mathcal{R}} :=\mathcal{R}\otimes_{\mathcal{A}} \widehat{H}_{\lambda}$ for each $\lambda\in \underline{\mathfrak{s}}_{n}'.$ Combining Theorems \ref{isom-YeckeHeck-alge-ac-add} and \ref{iso-theo-ern-yrn-ac} with Lemma \ref{isomorphism-theorem-ac} and the equality \eqref{the-canonical-decomposition}, we can get the following theorem.
\begin{theorem}\label{isomorphism-the-tensorproduct-hecke}
We have the following isomorphism of $\mathcal{R}$-algebras$:$
$$\widehat{Y}_{r,n}\cong\widehat{E}_{r,n}^{\mathcal{R}}\cong \bigoplus_{\lambda\in \underline{\mathfrak{s}}_{n}'}\mathrm{Mat}_{n_{\lambda}}(\widehat{H}_{\lambda}^{\mathcal{R}}).$$
\end{theorem}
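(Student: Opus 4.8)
The plan is to assemble the claimed chain of $\mathcal{R}$-algebra isomorphisms from ingredients already established in the excerpt, applying extension of scalars from $\mathcal{A}=\mathbb{Z}[q,q^{-1}]$ to $\mathcal{R}=\mathbb{Z}[\tfrac{1}{r}][q,q^{-1},\zeta]$ at each stage. First I would recall from Theorem \ref{isom-YeckeHeck-alge-ac-add} that $\widehat{Y}_{r,n}\cong\widehat{H}_{r,n}^{\mathcal{R}}=\mathcal{R}\otimes_{\mathcal{A}}\widehat{H}_{r,n}$ as $\mathcal{R}$-algebras. Next, Theorem \ref{iso-theo-ern-yrn-ac} gives an $\mathcal{A}$-algebra isomorphism $\widehat{H}_{r,n}\overset{\sim}{\to}\widehat{E}_{r,n}$; applying the exact functor $\mathcal{R}\otimes_{\mathcal{A}}(-)$, which takes $\mathcal{A}$-algebra isomorphisms to $\mathcal{R}$-algebra isomorphisms, yields $\widehat{H}_{r,n}^{\mathcal{R}}\cong\mathcal{R}\otimes_{\mathcal{A}}\widehat{E}_{r,n}=\widehat{E}_{r,n}^{\mathcal{R}}$. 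This already gives the first isomorphism $\widehat{Y}_{r,n}\cong\widehat{E}_{r,n}^{\mathcal{R}}$.

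For the second isomorphism I would start from the decomposition \eqref{the-canonical-decomposition}, namely $\widehat{E}_{r,n}=\bigoplus_{\lambda\in\underline{\mathfrak{s}}_{n}'}\mathrm{Mat}_{n_{\lambda}}(1_{\lambda}\widehat{H}_{r,n}1_{\lambda})$, which the excerpt states is compatible with the algebra structure. Tensoring with $\mathcal{R}$ over $\mathcal{A}$ commutes with finite direct sums and with the formation of matrix algebras, so $\widehat{E}_{r,n}^{\mathcal{R}}\cong\bigoplus_{\lambda\in\underline{\mathfrak{s}}_{n}'}\mathrm{Mat}_{n_{\lambda}}\!\bigl(\mathcal{R}\otimes_{\mathcal{A}}(1_{\lambda}\widehat{H}_{r,n}1_{\lambda})\bigr)$. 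Then Lemma \ref{isomorphism-theorem-ac} supplies an $\mathcal{A}$-algebra isomorphism $\varphi\colon\widehat{H}_{\lambda}\overset{\sim}{\to}1_{\lambda}\widehat{H}_{r,n}1_{\lambda}$ for each $\lambda\in\underline{\mathfrak{s}}_{n}'$; extending scalars gives $\mathcal{R}\otimes_{\mathcal{A}}(1_{\lambda}\widehat{H}_{r,n}1_{\lambda})\cong\widehat{H}_{\lambda}^{\mathcal{R}}$, and substituting this into each matrix block produces $\bigoplus_{\lambda\in\underline{\mathfrak{s}}_{n}'}\mathrm{Mat}_{n_{\lambda}}(\widehat{H}_{\lambda}^{\mathcal{R}})$, as desired.

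I do not anticipate a serious obstacle here, since every step is an application of a previously proved isomorphism together with the formal fact that $\mathcal{R}\otimes_{\mathcal{A}}(-)$ is a (strong) monoidal functor on $\mathcal{A}$-algebras preserving finite products and matrix-algebra constructions; one only needs $\mathcal{R}$ to be flat (indeed free) as an $\mathcal{A}$-module, which is immediate from $\mathcal{R}=\mathbb{Z}[\tfrac{1}{r}][q,q^{-1},\zeta]$. The one point deserving a sentence of care is the compatibility of the block decomposition \eqref{the-canonical-decomposition} with scalar extension — that the identification of $\widehat{E}_{r,n}^{\mathcal{R}}$ with the direct sum of matrix algebras respects multiplication — but this is transparent from the description of the product $(xy)_{\lambda_1,\lambda_2}=\sum_{\tilde\lambda\in W\lambda_1}x_{\lambda_1,\tilde\lambda}y_{\tilde\lambda,\lambda_2}$, which is exactly matrix multiplication within each $W$-orbit block. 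So the proof is essentially a bookkeeping composition: chain together Theorem \ref{isom-YeckeHeck-alge-ac-add}, the $\mathcal{R}$-extension of Theorem \ref{iso-theo-ern-yrn-ac}, the $\mathcal{R}$-extension of \eqref{the-canonical-decomposition}, and the $\mathcal{R}$-extension of Lemma \ref{isomorphism-theorem-ac}.
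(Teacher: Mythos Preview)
Your proposal is correct and follows exactly the paper's own approach: the paper simply states that the theorem is obtained by combining Theorems \ref{isom-YeckeHeck-alge-ac-add} and \ref{iso-theo-ern-yrn-ac} with Lemma \ref{isomorphism-theorem-ac} and the decomposition \eqref{the-canonical-decomposition}. Your write-up is in fact more detailed than the paper's, spelling out the routine base-change arguments that the paper leaves implicit.
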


Combining Theorems \ref{affine-cell-theorem} and \ref{ern-properties} with Theorem \ref{iso-theo-ern-yrn-ac}, we have proved the following result.
\begin{theorem}\label{hrn-affine-cellularity}
Let $\mathcal{A}=\mathbb{Z}[q, q^{-1}].$ The algebra $\widehat{H}_{r,n}$ over $\mathcal{A}$ is an affine cellular algebra.
\end{theorem}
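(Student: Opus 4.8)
The plan is to obtain the statement as a formal consequence of the two main results already established, namely the verification of the axioms in Theorem~\ref{ern-properties} and the isomorphism $\widehat{H}_{r,n}\overset{\sim}{\longrightarrow}\widehat{E}_{r,n}$ of Theorem~\ref{iso-theo-ern-yrn-ac}. First I would note that, taking $k=\mathcal{A}=\mathbb{Z}[q,q^{-1}]$ (so that the indeterminate $v$ of Section~2.2 is $q$), Theorem~\ref{ern-properties} asserts that the $\mathcal{A}$-algebra $\widehat{E}_{r,n}$, together with its basis $\{c^{\lambda_{1},\lambda_{2};\widehat{w}}\}_{(\lambda_{1},\lambda_{2},\widehat{w})\in C}$ and the $\mathcal{A}$-involution $i$, satisfies properties $P_{1}$, $P_{2}$, $P_{3}$ and $P_{4}$. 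Feeding this into Theorem~\ref{affine-cell-theorem} immediately shows that $\widehat{E}_{r,n}$ is an affine cellular $\mathcal{A}$-algebra, and moreover produces an explicit affine cell chain whose layers are the subquotient algebras $A_{\mathbf{c}}$ attached to the $2$-cells $\mathbf{c}$ of that basis.

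It then remains only to transport affine cellularity across the isomorphism $\Psi\colon\widehat{H}_{r,n}\overset{\sim}{\longrightarrow}\widehat{E}_{r,n}$ of Theorem~\ref{iso-theo-ern-yrn-ac} (with inverse $\Phi$). This is a general and routine fact: if $f\colon A\overset{\sim}{\longrightarrow}A'$ is an isomorphism of $k$-algebras and $A'$ is affine cellular with involution $i'$ and affine cell chain $0=J_{0}'\subset\cdots\subset J_{n}'=A'$, then $A$ is affine cellular with respect to the involution $f^{-1}i'f$ and the chain $0=f^{-1}(J_{0}')\subset\cdots\subset f^{-1}(J_{n}')=A$, because every ingredient occurring in Definition~2.1 --- the $k$-module decomposition, the anti-involution, the bimodule $\Delta=V\otimes_{k}B$, the affine algebra $B$ with its involution $\sigma$, the bilinear form $\rho$, and the commutative square --- is carried along unchanged by $f$. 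Applying this with $f=\Psi$ yields the theorem: $\widehat{H}_{r,n}$ is affine cellular, with involution obtained by conjugating $i$ through $\Psi$, and with affine cell chain $\{\Phi(J_{l}')\}$ whose layers correspond under the isomorphism to the algebras $A_{\mathbf{c}}$.

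I do not expect any serious obstacle in this last step; all the genuine content lies upstream, in Theorem~\ref{ern-properties} (which rests on the work of Lusztig and Shi on two-sided cells in extended affine Weyl groups of type $A$ for $P_{1}$, $P_{2}$, $P_{3}$, and on Xi's description of the based rings of those cells for $P_{4}$) and in Theorem~\ref{iso-theo-ern-yrn-ac} (the Lusztig-style matrix-algebra isomorphism, with its bookkeeping involving the elements $\tau_{\lambda}$, $\tau_{\lambda}'$ and the $\mathfrak{S}_{n}$-orbit decomposition of $\underline{\mathfrak{s}}_{n}$). The only point that deserves a sentence of care is to record explicitly which basis and which involution of $\widehat{H}_{r,n}$ are being used, so that the cell structure on $\widehat{H}_{r,n}$ is the transported one and the identification of its layers with the algebras $A_{\mathbf{c}}$ is the expected one; this is immediate from the functoriality described in the previous paragraph.
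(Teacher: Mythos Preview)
Your proposal is correct and follows essentially the same approach as the paper: the paper's proof is simply the one-line observation that the result follows by combining Theorems~\ref{affine-cell-theorem} and~\ref{ern-properties} with Theorem~\ref{iso-theo-ern-yrn-ac}. Your write-up merely makes explicit the (routine) transport of the affine cellular structure across the isomorphism $\Psi$, which the paper leaves implicit.
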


Combining Theorems \ref{isom-YeckeHeck-alge-ac-add} and \ref{hrn-affine-cellularity} with Lemma \ref{affine-cellular-algebra-field-extension}, we can get the following result.
\begin{theorem}\label{yrn-affine-cellularity}
Let $\mathcal{R}=\mathbb{Z}[\frac{1}{r}][q,q^{-1},\zeta].$ The affine Yokonuma-Hecke algebra $\widehat{Y}_{r,n}$ over $\mathcal{R}$ is an affine cellular algebra.
\end{theorem}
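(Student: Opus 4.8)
The plan is to deduce Theorem~\ref{yrn-affine-cellularity} from the results already assembled, treating it essentially as a base-change corollary of Theorem~\ref{hrn-affine-cellularity}. First I would recall that by Theorem~\ref{isom-YeckeHeck-alge-ac-add} there is an $\mathcal{R}$-algebra isomorphism $\widehat{Y}_{r,n}\cong\widehat{H}_{r,n}^{\mathcal{R}}=\mathcal{R}\otimes_{\mathcal{A}}\widehat{H}_{r,n}$, where $\mathcal{A}=\mathbb{Z}[q,q^{-1}]$. Thus it suffices to prove that $\mathcal{R}\otimes_{\mathcal{A}}\widehat{H}_{r,n}$ is affine cellular over $\mathcal{R}$. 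By Theorem~\ref{hrn-affine-cellularity}, $\widehat{H}_{r,n}$ is affine cellular over $\mathcal{A}$, with the cellular datum supplied ultimately by Theorems~\ref{affine-cell-theorem} and~\ref{ern-properties} together with the isomorphism $\widehat{H}_{r,n}\cong\widehat{E}_{r,n}$ of Theorem~\ref{iso-theo-ern-yrn-ac}.

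Next I would invoke Lemma~\ref{affine-cellular-algebra-field-extension}: there the hypotheses require a ring homomorphism $\phi\colon k\to K$ of Noetherian domains, with $A$ affine cellular over $k$, concluding that $K\otimes_{k}A$ is affine cellular over $K$ with respect to the involution $\mathrm{id}_{K}\otimes i$. Here I take $k=\mathcal{A}=\mathbb{Z}[q,q^{-1}]$, $K=\mathcal{R}=\mathbb{Z}[\tfrac{1}{r}][q,q^{-1},\zeta]$, and $\phi$ the obvious inclusion (well-defined since $q\mapsto q$ extends uniquely to an $\mathcal{A}$-algebra map into $\mathcal{R}$). Both $\mathcal{A}$ and $\mathcal{R}$ are Noetherian domains — $\mathcal{A}$ is a localization of a polynomial ring over $\mathbb{Z}$, and $\mathcal{R}$ is a localization of $\mathbb{Z}[\zeta][q,q^{-1}]$, hence an integral domain, and Noetherian by the Hilbert basis theorem. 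Applying the lemma with $A=\widehat{H}_{r,n}$ gives that $\mathcal{R}\otimes_{\mathcal{A}}\widehat{H}_{r,n}=\widehat{H}_{r,n}^{\mathcal{R}}$ is affine cellular over $\mathcal{R}$.

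Finally, transporting this affine cellular structure through the $\mathcal{R}$-algebra isomorphism $\widehat{Y}_{r,n}\cong\widehat{H}_{r,n}^{\mathcal{R}}$ of Theorem~\ref{isom-YeckeHeck-alge-ac-add} yields that $\widehat{Y}_{r,n}$ is affine cellular over $\mathcal{R}$; the induced involution on $\widehat{Y}_{r,n}$ is obtained by conjugating $\mathrm{id}_{\mathcal{R}}\otimes i$ by this isomorphism. Since an algebra isomorphism carries an affine cell chain to an affine cell chain and preserves all of the bimodule and generalized-matrix-algebra data in Definition~2.1 and Proposition~\ref{kx2-proposition-affine-cell}, no further verification is needed.

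I do not expect any genuine obstacle here: the statement is a formal consequence of the scalar-extension lemma plus the new presentation. The only minor points worth spelling out are (i) checking that $\mathcal{R}$ is indeed a Noetherian domain and that $\mathcal{A}\hookrightarrow\mathcal{R}$ is a ring homomorphism with identity, so that Lemma~\ref{affine-cellular-algebra-field-extension} applies verbatim, and (ii) noting that affine cellularity is preserved under algebra isomorphism together with the matching transport of the involution. Both are routine, so the proof is short.
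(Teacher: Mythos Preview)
Your proposal is correct and follows exactly the paper's own argument: the paper deduces Theorem~\ref{yrn-affine-cellularity} by combining Theorem~\ref{isom-YeckeHeck-alge-ac-add}, Theorem~\ref{hrn-affine-cellularity}, and Lemma~\ref{affine-cellular-algebra-field-extension}. Your additional remarks verifying that $\mathcal{R}$ is a Noetherian domain and that affine cellularity transports along algebra isomorphisms are routine checks the paper leaves implicit.
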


\noindent \begin{remark} By Theorem \ref{isomorphism-the-tensorproduct-hecke}, we can also recover the modular representation theory of $\widehat{Y}_{r,n}$ established in [CW, Theorem 4.1]. We skip the details and leave them to the reader. \end{remark}

Next we will apply the approach of affine cellular algebras to investigate homological properties of $\widehat{Y}_{r,n}.$ Let $\tau_{\lambda}$, $\tau_{\lambda}'$ $(\lambda\in \underline{\mathfrak{s}}_{n})$ and the map $\Psi$ be defined as in the proof of Theorem \ref{iso-theo-ern-yrn-ac}. Then we have the following lemma.
\begin{lemma}\label{important-formula-ac}
For any $(\lambda_{1}, \lambda_{2}, \widehat{w})\in C,$ we have $\Psi(\tau_{\lambda_{1}}'1_{\lambda_{1}^{0}}c_{\widehat{w}, \lambda_{1}^{0}}1_{\lambda_{2}^{0}}\tau_{\lambda_{2}})=c^{\lambda_{1}, \lambda_{2}; \widehat{w}}.$
\end{lemma}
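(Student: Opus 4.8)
The plan is to verify the claimed identity by first computing the left-hand side explicitly using the definition of $\Psi$, and then showing that the resulting element of $\widehat{E}_{r,n}$ satisfies the two characterizing properties of $c^{\lambda_{1}, \lambda_{2}; \widehat{w}}$, namely bar-invariance and the triangularity condition relative to the basis $\{x^{\lambda_{1}, \lambda_{2}; \widehat{w}}\}$. First I would unwind the definition: for $(\mu_{1}, \mu_{2})\in\Gamma$ one has
\begin{align*}
\Psi(\tau_{\lambda_{1}}'1_{\lambda_{1}^{0}}c_{\widehat{w}, \lambda_{1}^{0}}1_{\lambda_{2}^{0}}\tau_{\lambda_{2}})_{\mu_{1}, \mu_{2}}
=\tau_{\mu_{1}}1_{\mu_{1}}\tau_{\lambda_{1}}'1_{\lambda_{1}^{0}}c_{\widehat{w}, \lambda_{1}^{0}}1_{\lambda_{2}^{0}}\tau_{\lambda_{2}}1_{\mu_{2}}\tau_{\mu_{2}}'.
\end{align*}
Since $c_{\widehat{w}, \lambda_{1}^{0}}\in 1_{\lambda_{1}^{0}}\widehat{H}_{r,n}1_{\lambda_{1}^{0}}$ and (using $\tau_{\mu}1_{\mu}=1_{\mu^{0}}\tau_{\mu}$, $1_{\mu}\tau_{\mu}'=\tau_{\mu}'1_{\mu^{0}}$) the factors $1_{\mu_{1}}\tau_{\lambda_{1}}'$ and $\tau_{\lambda_{2}}1_{\mu_{2}}$ vanish unless $\mu_{1}=\lambda_{1}$ and $\mu_{2}=\lambda_{2}$, the only surviving entry is the $(\lambda_{1},\lambda_{2})$-entry, where by the identities \eqref{align-alg-isomor-511-hrn-ern} ($1_{\lambda}\tau_{\lambda}'\tau_{\lambda}1_{\lambda}=1_{\lambda}$ and $1_{\lambda^{0}}\tau_{\lambda}\tau_{\lambda}'1_{\lambda^{0}}=1_{\lambda^{0}}$, noting $\lambda_{1}^{0}=\lambda_{2}^{0}$) everything collapses to $c_{\widehat{w}, \lambda_{1}^{0}}$. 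Hence $\Psi(\tau_{\lambda_{1}}'1_{\lambda_{1}^{0}}c_{\widehat{w}, \lambda_{1}^{0}}1_{\lambda_{2}^{0}}\tau_{\lambda_{2}}) = x^{\lambda_{1}, \lambda_{2}; \widehat{w}}$-type element whose $(\lambda_{1},\lambda_{2})$-entry is $c_{\widehat{w}, \lambda_{1}^{0}}$ and all other entries are zero.

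Next I would match this with the defining properties of $c^{\lambda_{1}, \lambda_{2}; \widehat{w}}$. For bar-invariance: since $\overline{c_{\widehat{w}, \lambda_{1}^{0}}}=c_{\widehat{w}, \lambda_{1}^{0}}$ by construction of the Kazhdan--Lusztig-type basis of $1_{\lambda_{1}^{0}}\widehat{H}_{r,n}1_{\lambda_{1}^{0}}$, and the bar involution on $\widehat{E}_{r,n}$ acts entrywise, the computed element is bar-invariant. For the triangularity: expanding $c_{\widehat{w}, \lambda_{1}^{0}} = g_{\widehat{w}, \lambda_{1}^{0}} + \sum_{\widehat{y}\in\widehat{W}_{\lambda_{1}^{0}}; \widehat{y}\neq\widehat{w}} a_{\widehat{y}}\, g_{\widehat{y}, \lambda_{1}^{0}}$ with $a_{\widehat{y}}\in q^{-1}\mathbb{Z}[q^{-1}]$, and recalling that $x^{\lambda_{1}, \lambda_{2}; \widehat{w}}$ is precisely the element of $\widehat{E}_{r,n}$ whose $(\lambda_{1},\lambda_{2})$-entry is $g_{\widehat{w}, \lambda_{1}^{0}}$ and all other entries zero, the computed element equals $x^{\lambda_{1}, \lambda_{2}; \widehat{w}} + \sum_{\widehat{y}\neq\widehat{w}} a_{\widehat{y}}\, x^{\lambda_{1}, \lambda_{2}; \widehat{y}}$, which lies in $x^{\lambda_{1}, \lambda_{2}; \widehat{w}} + \sum_{\widehat{y}\in\widehat{W}_{\lambda_{1}^{0}}; \widehat{y}\neq\widehat{w}} q^{-1}\mathbb{Z}[q^{-1}]\, x^{\lambda_{1}, \lambda_{2}; \widehat{y}}$. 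By the uniqueness in the characterization of $c^{\lambda_{1}, \lambda_{2}; \widehat{w}}$ (established by the argument adapted from [L5, \S5.2]), the two elements coincide.

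The main obstacle is the bookkeeping in the first step: tracking how the idempotents $1_{\mu_{1}}$, $1_{\mu_{2}}$ force the entry-collapse, and correctly applying \eqref{align-alg-isomor-511-hrn-ern} together with the compatibility relations $\tau_{\lambda}1_{\lambda}=1_{\lambda^{0}}\tau_{\lambda}$, $1_{\lambda}\tau_{\lambda}'=\tau_{\lambda}'1_{\lambda^{0}}$ on both sides of $c_{\widehat{w}, \lambda_{1}^{0}}$ so that the $\tau$-factors cancel in pairs. Everything after that is a direct appeal to the already-established uniqueness characterizing the canonical basis $c^{\lambda_{1},\lambda_{2};\widehat{w}}$ of $\widehat{E}_{r,n}$, so no further subtlety arises.
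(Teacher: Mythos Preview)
Your proposal is correct and follows essentially the same route as the paper: compute the $(\mu_{1},\mu_{2})$-entry of $\Psi$ applied to the given element, observe that only the $(\lambda_{1},\lambda_{2})$-entry survives and equals $c_{\widehat{w},\lambda_{1}^{0}}$ (via the identities $\tau_{\lambda}1_{\lambda}=1_{\lambda^{0}}\tau_{\lambda}$, $1_{\lambda}\tau_{\lambda}'=\tau_{\lambda}'1_{\lambda^{0}}$ together with \eqref{align-alg-isomor-511-hrn-ern}), then invoke bar-invariance, triangularity, and uniqueness. The only cosmetic difference is that the paper first records the special case $\Psi(\tau_{\lambda_{1}}'1_{\lambda_{1}^{0}}g_{\widehat{w},\lambda_{1}^{0}}1_{\lambda_{2}^{0}}\tau_{\lambda_{2}})=x^{\lambda_{1},\lambda_{2};\widehat{w}}$ as a separate identity and then extends linearly, whereas you work with $c_{\widehat{w},\lambda_{1}^{0}}$ directly and expand at the triangularity step; the content is the same.
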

\begin{proof}
We first show that for any $(\lambda_{1}, \lambda_{2}, \widehat{w})\in C,$ we have
\begin{align}\label{lemma-new-55-lemmapsi}
\Psi(\tau_{\lambda_{1}}'1_{\lambda_{1}^{0}}g_{\widehat{w}, \lambda_{1}^{0}}1_{\lambda_{2}^{0}}\tau_{\lambda_{2}})=x^{\lambda_{1}, \lambda_{2}; \widehat{w}}.
\end{align}
We have, for any $(\lambda_{1}', \lambda_{2}')\in \Gamma,$
\begin{align*}
\Psi(\tau_{\lambda_{1}}'1_{\lambda_{1}^{0}}g_{\widehat{w}, \lambda_{1}^{0}}1_{\lambda_{2}^{0}}\tau_{\lambda_{2}})_{\lambda_{1}', \lambda_{2}'}=\tau_{\lambda_{1}'}1_{\lambda_{1}'}\tau_{\lambda_{1}}'1_{\lambda_{1}^{0}}g_{\widehat{w}, \lambda_{1}^{0}}1_{\lambda_{2}^{0}}\tau_{\lambda_{2}}1_{\lambda_{2}'}\tau_{\lambda_{2}'}'=\delta_{(\lambda_{1}', \lambda_{2}'), (\lambda_{1}, \lambda_{2})}g_{\widehat{w}, \lambda_{1}^{0}}.
\end{align*}
The equality \eqref{lemma-new-55-lemmapsi} now follows from the definition of $x^{\lambda_{1}, \lambda_{2}; \widehat{w}}.$

Next we show that
\begin{align}\label{lemma-new-55-lemmapsi1111}
\overline{\Psi(\tau_{\lambda_{1}}'1_{\lambda_{1}^{0}}c_{\widehat{w}, \lambda_{1}^{0}}1_{\lambda_{2}^{0}}\tau_{\lambda_{2}})}=\Psi(\tau_{\lambda_{1}}'1_{\lambda_{1}^{0}}c_{\widehat{w}, \lambda_{1}^{0}}1_{\lambda_{2}^{0}}\tau_{\lambda_{2}}).
\end{align}
For any $(\lambda_{1}', \lambda_{2}')\in \Gamma,$ we have
\begin{align*}
\overline{\Psi(\tau_{\lambda_{1}}'1_{\lambda_{1}^{0}}c_{\widehat{w}, \lambda_{1}^{0}}1_{\lambda_{2}^{0}}\tau_{\lambda_{2}})}_{\lambda_{1}', \lambda_{2}'}&=\overline{\Psi(\tau_{\lambda_{1}}'1_{\lambda_{1}^{0}}c_{\widehat{w}, \lambda_{1}^{0}}1_{\lambda_{2}^{0}}\tau_{\lambda_{2}})_{\lambda_{1}', \lambda_{2}'}}\\
&=\overline{\delta_{(\lambda_{1}', \lambda_{2}'), (\lambda_{1}, \lambda_{2})}c_{\widehat{w}, \lambda_{1}^{0}}}\\
&=\delta_{(\lambda_{1}', \lambda_{2}'), (\lambda_{1}, \lambda_{2})}c_{\widehat{w}, \lambda_{1}^{0}}\\
&=\Psi(\tau_{\lambda_{1}}'1_{\lambda_{1}^{0}}c_{\widehat{w}, \lambda_{1}^{0}}1_{\lambda_{2}^{0}}\tau_{\lambda_{2}})_{\lambda_{1}', \lambda_{2}'}.
\end{align*}
Thus, we see that the equality \eqref{lemma-new-55-lemmapsi1111} holds by definition.

It follows from \eqref{lemma-new-55-lemmapsi} that
\begin{align}\label{lemma-new-55-lemmapsi2222}
\Psi(\tau_{\lambda_{1}}'1_{\lambda_{1}^{0}}c_{\widehat{w}, \lambda_{1}^{0}}1_{\lambda_{2}^{0}}\tau_{\lambda_{2}})-x^{\lambda_{1}, \lambda_{2}; \widehat{w}}&=\Psi\big(\tau_{\lambda_{1}}'1_{\lambda_{1}^{0}}(c_{\widehat{w}, \lambda_{1}^{0}}-g_{\widehat{w}, \lambda_{1}^{0}})1_{\lambda_{2}^{0}}\tau_{\lambda_{2}}\big)\notag\\
&\in \sum_{\widehat{y}\in \widehat{W}_{\lambda_{1}^{0}}; \widehat{y}\neq \widehat{w}}q^{-1}\mathbb{Z}[q^{-1}]x^{\lambda_{1}, \lambda_{2}; \widehat{y}}.
\end{align}
According to the uniqueness of the element $c^{\lambda_{1}, \lambda_{2}; \widehat{w}}$ satisfying the two properties \eqref{lemma-new-55-lemmapsi1111} and \eqref{lemma-new-55-lemmapsi2222}, we must have $\Psi(\tau_{\lambda_{1}}'1_{\lambda_{1}^{0}}c_{\widehat{w}, \lambda_{1}^{0}}1_{\lambda_{2}^{0}}\tau_{\lambda_{2}})=c^{\lambda_{1}, \lambda_{2}; \widehat{w}}.$
\end{proof}

Let $k$ be a field of characteristic zero, which is considered as an $\mathcal{A}$-algebra by mapping $q$ to an invertible element $q\in k^{*}.$ Set
\begin{align*}
\widehat{E}_{r,n}^{k} :=k\otimes_{\mathcal{A}} \widehat{E}_{r,n},\quad\widehat{H}_{r,n}^{k} :=k\otimes_{\mathcal{A}} \widehat{H}_{r,n}\quad \mathrm{and} \quad\widehat{H}_{\lambda}^{k} :=k\otimes_{\mathcal{A}} \widehat{H}_{\lambda} \text{ for each }\lambda\in \underline{\mathfrak{s}}_{n}'.
\end{align*}

Combining Theorems \ref{affine-cell-theorem} and \ref{ern-properties} with Lemma \ref{affine-cellular-algebra-field-extension}, we see that the algebra $\widehat{E}_{r,n}^{k}$ is an affine cellular algebra. In the next lemma, we will show that all layers in an affine cell chain of $\widehat{E}_{r,n}^{k}$ satisfy the conditions in Theorem \ref{affine-cellua-YH-alg}.
\begin{lemma}\label{idempotent-condition-ern}
Assume that $q\in k^{*}$ and $\sum_{w\in W}q^{l(w)}\neq 0.$ Then the affine $k$-algebra, associated to each layer of $\widehat{E}_{r,n}^{k}$ as in Proposition \ref{kx2-proposition-affine-cell}, has finite global dimension and its Jacobson radical is zero. Moreover, each layer in an affine cell chain of $\widehat{E}_{r,n}^{k}$ is idempotent and is generated by a non-zero idempotent element.
\end{lemma}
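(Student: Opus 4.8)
The plan is to reduce the assertion, layer by layer, to the extended affine Hecke algebras of type $A$ appearing in the canonical decomposition \eqref{the-canonical-decomposition}, and then to feed in the structure theory of their asymptotic rings. By Lemma \ref{affine-cellular-algebra-field-extension} the layers of $\widehat{E}_{r,n}^{k}$ are the base changes $k\otimes_{\mathcal{A}}A_{\mathbf{c}}$ of the layers $A_{\mathbf{c}}$ of $\widehat{E}_{r,n}$, and by Theorems \ref{affine-cell-theorem} and \ref{ern-properties} these are precisely the $2$-cell subquotients, each of which is a generalized matrix algebra $(M_{n_{\mathbf{c}}}(B_{\mathbf{c}}),\rho_{\mathbf{c}})$ with $B_{\mathbf{c}}$ the representation ring of the reductive group $G_{\mathbf{c}}$ attached to $\mathbf{c}$ (property $P_{4}$). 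First we would use the decomposition \eqref{the-canonical-decomposition}, the isomorphism $1_{\lambda}\widehat{H}_{r,n}1_{\lambda}\cong\widehat{H}_{\lambda}\cong\widehat{H}_{n_{1}}\otimes\cdots\otimes\widehat{H}_{n_{r}}$ of Lemma \ref{isomorphism-theorem-ac}, and Lemma \ref{lemma-tensor-products-aff} to see that the affine algebra $B_{\mathbf{c}}$ of every layer of $\widehat{E}_{r,n}$ is a tensor product over $\mathcal{A}$ of affine algebras of layers of the factors $\widehat{H}_{n_{i}}$; Lemma \ref{important-formula-ac}, which shows that the isomorphism $\Psi$ matches the Kazhdan--Lusztig-type bases of the two sides, guarantees that this reduction is compatible with the idempotency questions as well. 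Thus it is enough to treat a single $2$-cell $\mathbf{c}$ of a single extended affine Hecke algebra $\widehat{H}_{m}$ of type $A$.

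The finiteness statements then follow from the structure theory of cells in affine Weyl groups of type $A$. By [L1-3, Sh] and [Xi1] the group $G_{\mathbf{c}}$ is a product of general linear groups over $\mathbb{C}$, so $B_{\mathbf{c}}^{k}:=k\otimes_{\mathbb{Z}}B_{\mathbf{c}}$ is isomorphic to a finite tensor product over $k$ of rings of the form $k[x_{1},\ldots,x_{d-1},x_{d}^{\pm1}]$, that is, to a localization of a polynomial ring over $k$ in finitely many variables. Such a ring is a regular Noetherian integral domain of finite Krull dimension; hence its global dimension is finite, and, being a finitely generated $k$-algebra (so a Jacobson ring) which is moreover a domain, its Jacobson radical coincides with its nilradical and is zero. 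This establishes the first assertion of the lemma for every layer, and uses nothing about $q$.

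The hypothesis $\sum_{w\in W}q^{\ell(w)}\neq0$ is needed only for the idempotency statements. Since the Poincar\'{e} polynomial of each Young subgroup $W_{\lambda}=\mathfrak{S}_{n_{1}}\times\cdots\times\mathfrak{S}_{n_{r}}$ ($\lambda\in\underline{\mathfrak{s}}_{n}'$) divides the Poincar\'{e} polynomial $\sum_{w\in W}q^{\ell(w)}$ of $W=\mathfrak{S}_{n}$, the hypothesis forces $\sum_{w\in W_{\lambda}}q^{\ell(w)}\neq0$; equivalently, each finite Hecke algebra attached to the factors $\mathfrak{S}_{n_{i}}$ is semisimple after specialization at $q$. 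Following Lusztig's comparison of an affine Hecke algebra with its based ring ([L6], and [Xi1] in type $A$), we would then show that under this genericity condition the specialized cell subquotient $(\widehat{H}_{m}^{k})_{\mathbf{c}}$, and hence — via Lemma \ref{important-formula-ac} — the corresponding layer of $\widehat{E}_{r,n}^{k}$, is isomorphic to the \emph{honest} matrix algebra $M_{n_{\mathbf{c}}}(B_{\mathbf{c}}^{k})$; concretely, the Gram matrix of the bilinear form $\rho_{\mathbf{c}}$ becomes invertible after this specialization. A full matrix algebra over the unital ring $B_{\mathbf{c}}^{k}$ is itself unital, so, regarded as a two-sided ideal $J_{l}/J_{l-1}$ of $\widehat{E}_{r,n}^{k}/J_{l-1}$, it coincides with the ideal generated by its identity element, which is a non-zero idempotent; in particular the layer is idempotent and contains a non-zero idempotent of the quotient. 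Together with the preceding paragraph this verifies all the hypotheses of Theorem \ref{affine-cellua-YH-alg} for $\widehat{E}_{r,n}^{k}$.

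The main obstacle will be the last step: showing that the condition $\sum_{w\in W}q^{\ell(w)}\neq0$ is exactly what keeps the form $\rho_{\mathbf{c}}$ non-degenerate after specialization, so that the layers become \emph{full} and not merely generalized matrix algebras. This rests both on the precise description of two-sided cells and based rings of affine Weyl groups of type $A$ and on checking that the affine cellular structure produced by Theorem \ref{affine-cell-theorem} is compatible with Lusztig's isomorphism between $\widehat{H}_{m}$ and $J\otimes\mathcal{A}$. By contrast, once the affine algebras $B_{\mathbf{c}}^{k}$ have been identified with localized polynomial rings over $k$, the finiteness of their global dimension and the vanishing of their Jacobson radicals are routine.
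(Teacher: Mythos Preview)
Your reduction to the factors $\widehat{H}_{n_{i}}$ via the decomposition \eqref{the-canonical-decomposition}, Lemma \ref{isomorphism-theorem-ac} and Lemma \ref{lemma-tensor-products-aff} matches the paper, and your treatment of the affine algebras $B_{\mathbf{c}}^{k}$ (Laurent polynomial rings, hence finite global dimension and zero Jacobson radical) is essentially the paper's argument, which cites the proof of [KX2, Theorem~5.8] based on [Xi1] for the same identification.

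The divergence is in the idempotency step, and there your plan has a genuine gap. You propose to show that under the Poincar\'{e} condition the Gram matrix of $\rho_{\mathbf{c}}$ becomes invertible, so that each layer is an \emph{honest} unital matrix algebra. This is strictly stronger than what Theorem \ref{affine-cellua-YH-alg} requires, and it is not what the literature provides: the relevant input is [Xi2] (not [Xi1] or [L6]), and via the proof of [KX2, Theorem~5.8] it yields only that each cell subquotient of $\widehat{H}_{n_{i}}^{k}$ is idempotent and \emph{contains} a non-zero idempotent. These two properties do not force non-degeneracy of $\rho_{\mathbf{c}}$ (already for $B=k$ one can write down a degenerate $\rho$ whose generalized matrix algebra is idempotent and contains an idempotent), so the ``main obstacle'' you flag is not a routine check but an independent claim that neither [Xi1], [Xi2] nor [KX2] supplies.

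The paper avoids this entirely. Starting from the known idempotency and an idempotent $e_{\mathfrak{c}}\in\widehat{H}^{k}_{\mathfrak{c}}$, it transports both properties by hand to $\widehat{E}^{k}_{\overline{\mathfrak{c}}}$ using Lemma \ref{important-formula-ac} and the identities \eqref{align-alg-isomor-511-hrn-ern}: a basis element $c^{\lambda_{1},\lambda_{2};\widehat{z}}=\Psi(\tau_{\lambda_{1}}'1_{\lambda_{1}^{0}}(h_{1}\cdot h_{2})1_{\lambda_{2}^{0}}\tau_{\lambda_{2}})$ factors as a product in $\widehat{E}^{k}_{\overline{\mathfrak{c}}}$ after inserting $1_{\lambda_{1}^{0}}\tau_{\lambda_{1}}\tau_{\lambda_{1}}'1_{\lambda_{1}^{0}}=1_{\lambda_{1}^{0}}$, and the same trick shows $\Psi(\tau_{\lambda}'1_{\lambda^{0}}e_{\mathfrak{c}}1_{\lambda^{0}}\tau_{\lambda})$ is a non-zero idempotent in $\widehat{E}^{k}_{\overline{\mathfrak{c}}}$. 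The upgrade from ``contains an idempotent'' to ``generated by an idempotent'' then comes from [KX2, Theorem~4.3(1)], using $\mathrm{rad}(B_{\overline{\mathfrak{c}}})=0$. This is both more elementary and better supported by the existing references than the non-degeneracy route you outline.
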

\begin{proof}
Let $\lambda\in \underline{\mathfrak{s}}_{n}.$ Recall that associated with $\lambda^{0}\in \underline{\mathfrak{s}}_{n}',$ we have some uniquely determined non-negative integers $n_1,n_2,\ldots,n_{r}$ such that $\widehat{H}_{\lambda^{0}}^{k}\cong\widehat{H}_{n_{1}}^{k}\otimes \cdots\otimes \widehat{H}_{n_{r}}^{k},$ where each $\widehat{H}_{n_{i}}^{k} :=k\otimes_{\mathcal{A}} \widehat{H}_{n_{i}}$ ($1\leq i\leq r$) is the extended affine Hecke algebra of type $A.$ By [KX2, Theorem 5.7] and Lemma \ref{affine-cellular-algebra-field-extension} we see that the algebra $\widehat{H}_{n_{i}}^{k}$ is an affine cellular algebra. Based on the work [Xi1], it has been claimed (see the proof of [KX2, Theorem 5.8]) that the affine $k$-algebra, associated to each layer in an affine cell chain of $\widehat{H}_{n_{i}}^{k}$ as in Proposition \ref{kx2-proposition-affine-cell}, is isomorphic to a ring of Laurent polynomials in finitely many variables of the form
\begin{equation}\label{the-form-laurent-polynomai-ring}
k[X_1,\ldots, X_{s},X_{s+1},X_{s+1}^{-1},\ldots,X_{t},X_{t}^{-1}]
\end{equation}
for some $t\in \mathbb{Z}_{> 0},$ and moreover, it has finite global dimension and its Jacobson radical is zero. Based on the work [Xi2], it has been proved (see the proof of [KX2, Theorem 5.8] again) that each layer in an affine cell chain of $\widehat{H}_{n_{i}}^{k}$ is idempotent and contains a non-zero idempotent element under the assumption.

By Lemma \ref{lemma-tensor-products-aff}, we see that the algebra $\widehat{H}_{\lambda^{0}}^{k},$ which is isomorphic to a tensor product of the algebras $\widehat{H}_{n_{i}}^{k},$ is an affine cellular algebra. By the proof of Lemma \ref{lemma-tensor-products-aff}, we see that the affine $k$-algebra, associated to each layer in an affine cell chain of $\widehat{H}_{\lambda^{0}}^{k},$ is isomorphic to a tensor product of the rings of the form \eqref{the-form-laurent-polynomai-ring}, and therefore it has finite global dimension and its Jacobson radical is zero; moreover, each layer in an affine cell chain of $\widehat{H}_{\lambda^{0}}^{k}$ is idempotent and contains a non-zero idempotent element.

Assume that $\mathfrak{c}$ is a 2-cell of $\{c_{\widehat{w}, \lambda^{0}}\:|\:\widehat{w}\in \widehat{W}_{\lambda^{0}}\}.$ We denote by $\widehat{H}_{\mathfrak{c}}^{k}$ the free $k$-submodule of $1_{\lambda^{0}}\widehat{H}_{r,n}^{k}1_{\lambda^{0}}$ spanned by $\mathfrak{c}.$ Then $\widehat{H}_{\mathfrak{c}}^{k}$ is a layer in an affine cell chain of $1_{\lambda^{0}}\widehat{H}_{r,n}^{k}1_{\lambda^{0}}.$ By Lemma \ref{isomorphism-theorem-ac} and the explanations in the preceding paragraph, we see that the affine $k$-algebra $B_{\mathfrak{c}},$ associated to $\widehat{H}_{\mathfrak{c}}^{k},$ has finite global dimension and its Jacobson radical is zero; moreover, $\widehat{H}_{\mathfrak{c}}^{k}$ is idempotent and contain a non-zero idempotent element.

By Theorem \ref{ern-properties} and its proof, we see that the 2-cells of $\{c^{\lambda_{1}, \lambda_{2}; \widehat{w}}\:|\:(\lambda_{1}, \lambda_{2}, \widehat{w})\in C\}$ are the sets of the form $\{c^{\lambda_{1}, \lambda_{2}; \widehat{w}}\},$ where $\lambda_{1}, \lambda_{2}$ run through $W\delta^{0}$ (with $\delta^{0}\in \underline{\mathfrak{s}}_{n}'$ fixed) and $\widehat{w}$ runs through a subset $X$ of $\widehat{W}_{\delta^{0}}$ such that $\{c_{\widehat{w}, \delta^{0}}\:|\:\widehat{w}\in X\}$ is a 2-cell of $\{c_{\widehat{w}, \delta^{0}}\:|\:\widehat{w}\in \widehat{W}_{\delta^{0}}\}.$ Let $\overline{\mathfrak{c}}$ be the 2-cell of $\{c^{\lambda_{1}, \lambda_{2}; \widehat{w}}\:|\:(\lambda_{1}, \lambda_{2}, \widehat{w})\in C\}$ corresponding to $\mathfrak{c};$ that is, $\overline{\mathfrak{c}}$ consists of the elements $c^{\lambda_{1}, \lambda_{2}; \widehat{z}},$ where $\lambda_{1}, \lambda_{2}$ run through $W\lambda^{0}$ and $\widehat{z}$ runs through a subset $Z$ of $\widehat{W}_{\lambda^{0}}$ such that $\mathfrak{c}=\{c_{\widehat{z}, \lambda^{0}}\:|\:\widehat{z}\in Z\}.$

Let us denote by $\widehat{E}_{\overline{\mathfrak{c}}}^{k}$ the free $k$-submodule of $\widehat{E}_{r,n}^{k}$ spanned by $\overline{\mathfrak{c}}.$ Then $\widehat{E}_{\overline{\mathfrak{c}}}^{k}$ is a layer in an affine cell chain of $\widehat{E}_{r,n}^{k}.$ We need to show that the affine $k$-algebra $B_{\overline{\mathfrak{c}}}$, associated to $\widehat{E}_{\overline{\mathfrak{c}}}^{k},$ has finite global dimension and its Jacobson radical is zero, and moreover, $\widehat{E}_{\overline{\mathfrak{c}}}^{k}$ is idempotent and is generated by a non-zero idempotent element.

By \eqref{the-canonical-decomposition}, we see that the affine $k$-algebra $B_{\overline{\mathfrak{c}}}$ is isomorphic to a matrix algebra over $B_{\mathfrak{c}}.$ By the facts that the global dimension is preserved under Morita equivalence, and that $\mathrm{rad}M_{m}(R)=M_{m}(\mathrm{rad}R)$ for any ring $R$ and $m\in \mathbb{Z}_{> 0},$ we see that $B_{\overline{\mathfrak{c}}}$ has finite global dimension and its Jacobson radical is zero.

Assume that $c^{\lambda_{1}, \lambda_{2}; \widehat{z}}\in \overline{\mathfrak{c}}$ (necessarily $\lambda_{1}^{0}=\lambda_{2}^{0}=\lambda^{0}$). By Lemma \ref{important-formula-ac} we have $c^{\lambda_{1}, \lambda_{2}; \widehat{z}}=\Psi(\tau_{\lambda_{1}}'1_{\lambda_{1}^{0}}c_{\widehat{z}, \lambda_{1}^{0}}1_{\lambda_{2}^{0}}\tau_{\lambda_{2}})$ with $c_{\widehat{z}, \lambda_{1}^{0}}\in \mathfrak{c}.$ Since $\widehat{H}_{\mathfrak{c}}^{k}$ is idempotent, we assume that there exist two elements $h_1, h_2\in\widehat{H}_{\mathfrak{c}}^{k}$ such that $c_{\widehat{z}, \lambda_{1}^{0}}=h_{1}\cdot h_{2}.$ By \eqref{align-alg-isomor-511-hrn-ern} we have
\begin{align*}
c^{\lambda_{1}, \lambda_{2}; \widehat{z}}&=\Psi(\tau_{\lambda_{1}}'1_{\lambda_{1}^{0}}h_{1}\cdot h_{2}1_{\lambda_{2}^{0}}\tau_{\lambda_{2}})\\
&=\Psi(\tau_{\lambda_{1}}'1_{\lambda_{1}^{0}}h_{1}1_{\lambda_{1}^{0}}\tau_{\lambda_{1}}\tau_{\lambda_{1}}'1_{\lambda_{1}^{0}}
h_{2}1_{\lambda_{2}^{0}}\tau_{\lambda_{2}})\\
&=\Psi(\tau_{\lambda_{1}}'1_{\lambda_{1}^{0}}h_{1}1_{\lambda_{1}^{0}}\tau_{\lambda_{1}})\cdot\Psi(\tau_{\lambda_{1}}'1_{\lambda_{1}^{0}}
h_{2}1_{\lambda_{2}^{0}}\tau_{\lambda_{2}}).
\end{align*}
By using Lemma \ref{important-formula-ac} again, we see that both $\Psi(\tau_{\lambda_{1}}'1_{\lambda_{1}^{0}}h_{1}1_{\lambda_{1}^{0}}\tau_{\lambda_{1}})$ and $\Psi(\tau_{\lambda_{1}}'1_{\lambda_{1}^{0}}h_{2}1_{\lambda_{2}^{0}}\tau_{\lambda_{2}})$ belong to $\widehat{E}_{\overline{\mathfrak{c}}}^{k}.$ Thus, we have proved that $\widehat{E}_{\overline{\mathfrak{c}}}^{k}$ is idempotent.

Assume that $e_{\mathfrak{c}}$ is a non-zero idempotent element of $\widehat{H}_{\mathfrak{c}}^{k},$ that is, $e_{\mathfrak{c}}^{2}=e_{\mathfrak{c}}.$ By using \eqref{align-alg-isomor-511-hrn-ern} we have
\begin{align*}
\Psi(\tau_{\lambda}'1_{\lambda^{0}}e_{\mathfrak{c}}1_{\lambda^{0}}\tau_{\lambda})&=\Psi(\tau_{\lambda}'1_{\lambda^{0}}e_{\mathfrak{c}}^{2}1_{\lambda^{0}}\tau_{\lambda})\\
&=\Psi(\tau_{\lambda}'1_{\lambda^{0}}e_{\mathfrak{c}}1_{\lambda^{0}}\tau_{\lambda}\tau_{\lambda}'1_{\lambda^{0}}e_{\mathfrak{c}}1_{\lambda^{0}}\tau_{\lambda})\\
&=\Psi(\tau_{\lambda}'1_{\lambda^{0}}e_{\mathfrak{c}}1_{\lambda^{0}}\tau_{\lambda})\cdot\Psi(\tau_{\lambda}'1_{\lambda^{0}}e_{\mathfrak{c}}1_{\lambda^{0}}\tau_{\lambda})
\end{align*}
By Lemma \ref{important-formula-ac}, we see that $\Psi(\tau_{\lambda}'1_{\lambda^{0}}e_{\mathfrak{c}}1_{\lambda^{0}}\tau_{\lambda})$ belongs to $\widehat{E}_{\overline{\mathfrak{c}}}^{k}$ and is a non-zero idempotent element of $\widehat{E}_{\overline{\mathfrak{c}}}^{k}.$ By applying [KX2, Theorem 4.3(1)], we conclude that $\widehat{E}_{\overline{\mathfrak{c}}}^{k}$ is generated by the idempotent element. We have proved this lemma.
\end{proof}

We further assume that $k$ contains the $r$-th root of unity $\zeta$, which is regarded as an $\mathcal{R}$-algebra. Set $\widehat{Y}_{r,n}^{k} :=k\otimes_{\mathcal{R}} \widehat{Y}_{r,n}.$ By Theorem \ref{isomorphism-the-tensorproduct-hecke}, we have the following algebra isomorphism:
\begin{align*}
\widehat{Y}_{r,n}^{k}\overset{\sim}{\longrightarrow}\widehat{E}_{r,n}^{k}.
\end{align*}
Combining this fact with Lemma \ref{idempotent-condition-ern}, we can immediately get the following result by applying Theorem \ref{affine-cellua-YH-alg}.
\begin{theorem}\label{final-homological-propertis}
Assume that $k$ is a field of characteristic zero which contains $\zeta$, $q\in k^{*}$ and $\sum_{w\in W}q^{l(w)}\neq 0.$ Then all layers in an affine cell chain of $\widehat{Y}_{r,n}^{k}$ correspond to idempotent two-sided ideals, which all have idempotent generators. In particular, the parameter set of simple $\widehat{Y}_{r,n}^{k}$-modules equals the parameter set of simple modules of its asymptotic algebra, and so it is a finite union of affine spaces. Moreover, $\widehat{Y}_{r,n}^{k}$ has finite global dimension and its derived module category admits a stratification whose strata are the derived module categories of the various affine $k$-algebras $B_{l}.$
\end{theorem}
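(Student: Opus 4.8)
The plan is to obtain the theorem by assembling ingredients already established: the $k$-algebra isomorphism $\widehat{Y}_{r,n}^{k}\overset{\sim}{\longrightarrow}\widehat{E}_{r,n}^{k}$ recorded immediately above (a specialization of Theorem \ref{isomorphism-the-tensorproduct-hecke}), the structural information on $\widehat{E}_{r,n}^{k}$ gathered in Lemma \ref{idempotent-condition-ern}, and the general machinery of Theorem \ref{affine-cellua-YH-alg}.

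First I would transport the affine cellular structure across the isomorphism. We already know, by combining Theorems \ref{affine-cell-theorem} and \ref{ern-properties} with Lemma \ref{affine-cellular-algebra-field-extension}, that $\widehat{E}_{r,n}^{k}$ is affine cellular over $k$, and that by Theorem \ref{affine-cell-theorem} its layers are, up to isomorphism, the subquotient algebras $A_{\mathbf{c}}$ attached to the $2$-cells $\mathbf{c}$ of the distinguished basis. Since an algebra isomorphism sends an affine cell chain to an affine cell chain with isomorphic layers and the same asymptotic algebra, $\widehat{Y}_{r,n}^{k}$ is affine cellular, and it is enough to prove all the homological assertions for $\widehat{E}_{r,n}^{k}$ and then pull them back along $\widehat{Y}_{r,n}^{k}\overset{\sim}{\longrightarrow}\widehat{E}_{r,n}^{k}$.

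Next I would feed Lemma \ref{idempotent-condition-ern} into Theorem \ref{affine-cellua-YH-alg}. Writing each layer of the affine cell chain of $\widehat{E}_{r,n}^{k}$ as a generalized matrix algebra $(M_{n_{l}}(B_{l}),\rho_{l})$ in the sense of Proposition \ref{kx2-proposition-affine-cell}, Lemma \ref{idempotent-condition-ern} --- which holds precisely under the standing hypotheses $q\in k^{*}$ and $\sum_{w\in W}q^{l(w)}\neq 0$ --- gives $\mathrm{rad}(B_{l})=0$, $\mathrm{gldim}(B_{l})<\infty$, and that each layer is an idempotent two-sided ideal generated by a non-zero idempotent; in particular it contains a non-zero idempotent of the corresponding quotient. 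These are exactly the running hypotheses of Theorem \ref{affine-cellua-YH-alg}, so parts $(1)$, $(2)$ and $(3)$ of that theorem apply to $\widehat{E}_{r,n}^{k}$: the simple modules are parametrized by those of the asymptotic algebra $\bigoplus_{l}M_{n_{l}}(B_{l})$, hence by a finite union of affine spaces, one for each $B_{l}$ (each $B_{l}$ being, via \eqref{the-canonical-decomposition}, Lemma \ref{isomorphism-theorem-ac} and the proof of Lemma \ref{lemma-tensor-products-aff}, a tensor product of Laurent-polynomial rings as in \eqref{the-form-laurent-polynomai-ring}); the category $D(\widehat{E}_{r,n}^{k}\text{-}\mathrm{Mod})$ admits a stratification --- an iterated recollement --- with strata the $D(B_{l}\text{-}\mathrm{Mod})$; and $\mathrm{gldim}(\widehat{E}_{r,n}^{k})<\infty$ since every $\mathrm{gldim}(B_{l})<\infty$. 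Transporting along the isomorphism yields the theorem for $\widehat{Y}_{r,n}^{k}$.

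The genuine content has already been packed into Lemma \ref{idempotent-condition-ern}, so what remains is bookkeeping, and the only place where one must be slightly careful --- the ``main obstacle'' here --- is to confirm that the single numerical condition $\sum_{w\in W}q^{l(w)}\neq 0$ suffices to run Lemma \ref{idempotent-condition-ern} uniformly over all layers. This holds because each layer of $\widehat{E}_{r,n}^{k}$ comes, through the decomposition \eqref{the-canonical-decomposition} and Lemma \ref{isomorphism-theorem-ac}, from a layer of some $\widehat{H}_{\lambda}^{k}\cong\widehat{H}_{n_{1}}^{k}\otimes\cdots\otimes\widehat{H}_{n_{r}}^{k}$ with $n_{1}+\cdots+n_{r}=n$, and the idempotency input [Xi2] needed for a factor $\widehat{H}_{n_{i}}^{k}$ requires only that the Poincar\'{e} polynomial of $\mathfrak{S}_{n_{i}}$ be nonzero at $q$; since $\mathfrak{S}_{n_{i}}$ is (realized as) a parabolic subgroup of $W=\mathfrak{S}_{n}$, its Poincar\'{e} polynomial divides $\sum_{w\in W}q^{l(w)}$ in $\mathbb{Z}[q]$, so the hypothesis makes all of these nonzero simultaneously. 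The transfer of the relevant idempotents across the homomorphism $\Psi$ of Theorem \ref{iso-theo-ern-yrn-ac}, using Lemma \ref{important-formula-ac}, is already carried out inside the proof of Lemma \ref{idempotent-condition-ern}, so nothing further is needed.
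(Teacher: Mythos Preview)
Your proposal is correct and follows essentially the same approach as the paper: transport the problem to $\widehat{E}_{r,n}^{k}$ via the isomorphism of Theorem \ref{isomorphism-the-tensorproduct-hecke}, invoke Lemma \ref{idempotent-condition-ern} to verify the hypotheses of Theorem \ref{affine-cellua-YH-alg}, and read off the conclusions. If anything, you supply more detail than the paper does---in particular your observation that the Poincar\'{e} polynomial of each $\mathfrak{S}_{n_{i}}$ divides that of $W=\mathfrak{S}_{n}$, so the single hypothesis $\sum_{w\in W}q^{l(w)}\neq 0$ covers all the factors, makes explicit a point the paper leaves implicit inside Lemma \ref{idempotent-condition-ern}.
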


\noindent \begin{remark} From Theorem \ref{final-homological-propertis}, we immediately get that the affine Yokonuma-Hecke algebra $\widehat{Y}_{r,n}(q)$ is an affine quasi-hereditary algebra when the parameter $q$ is not a root of the Poincar\'{e} polynomial. \end{remark}

\noindent{\bf Acknowledgements.}
The author would like to thank Professor G. Lusztig for his comments on a draft version of this paper. The author would also like to thank the referee for many helpful comments and suggestions. The author was partially supported by Fundamental Research Funds of Shandong University (No. 2016GN024), China Postdoctoral Science Foundation funded project (No. 2016M592171) and by National Natural Science Foundation of China (No. 11601273).




\begin{thebibliography}{ABC}
\bibitem[ChL]{amamm}  M. Chlouveraki and S. Lambropoulou, The Yokonuma-Hecke algebras and the HOMFLYPT polynomial, J. Knot Theory Ramifications $\bf{22}$ (2013) 35 pp.

\bibitem[ChPA1]{amamm}  M. Chlouveraki and L. Poulain d'Andecy, Representation theory of the Yokonuma-Hecke algebra, Adv. Math. $\bf{259}$ (2014) 134-172.

\bibitem[ChPA2]{amamm}  M. Chlouveraki and L. Poulain d'Andecy, Markov traces on affine and cyclotomic Yokonuma-Hecke algebras, Int. Math. Res. Not. IMRN $\bf{14}$ (2016) 4167-4228.

\bibitem[ChS]{ChS}  M. Chlouveraki and V. S\'{e}cherre, The affine Yokonuma-Hecke algebra and the pro-$p$-Iwahori-Hecke algebra, Math. Res. Lett. $\bf{23}$ (2016) 707-718.

\bibitem[C1]{amamm}  W. Cui, Affine cellularity of BLN algebras, J. Algebra $\bf{441}$ (2015) 582-600.

\bibitem[C2]{amamm}  W. Cui, Affine cellularity of affine $q$-Schur algebras, Proc. Amer. Math. Soc. $\bf{144}$ (2016) 4663-4672.

\bibitem[CW]{amamm} W. Cui and J. Wan, Modular representations and branching rules for affine and cyclotomic Yokonuma-Hecke algebras, submitted, arXiv:1506.06570.

\bibitem[ER]{amamm}J. Espinoza and S. Ryom-Hansen, Cell structures for the Yokonuma-Hecke algebra and the algebra of braids and ties, arXiv:1506.00715v3.

\bibitem[GL]{amamm}  J.J. Graham and G.I. Lehrer, Cellular algebras, Invent. Math. $\bf{123}$ (1996) 1-34.

\bibitem[GM]{amamm}  J. Guilhot and V. Miemietz, Affine cellularity of affine Hecke algebras of rank two, Math. Z. $\bf{271}$ (2012) 373-397.

\bibitem[JPA]{amamm}  N. Jacon and L. Poulain d'Andecy, An isomorphism theorem for Yokonuma-Hecke algebras and applications to link invariants, Math. Z. $\bf{283}$ (2016) 301-338.

\bibitem[Ju1]{Ju1}  J. Juyumaya, Sur les nouveaux g\'en\'erateurs de l'alg\`ebre de Hecke $\mathcal{H}(G,U,1)$. (French) On new generators of the Hecke algebra $\mathcal{H}(G,U,1),$ J. Algebra $\bf{204}$ (1998) 49-68.

\bibitem[Ju2]{amamm}  J. Juyumaya, Markov trace on the Yokonuma-Hecke algebra, J. Knot Theory Ramifications $\bf{13}$ (2004) 25-39.

\bibitem[JuK]{JuK}  J. Juyumaya and S. Kannan, Braid relations in the Yokonuma-Hecke algebra, J. Algebra $\bf{239}$ (2001) 272-297.

\bibitem[JuL]{amamm}  J. Juyumaya and S. Lambropoulou, $p$-adic framed braids II. With an appendix by Paul G\'{e}rardin, Adv. Math. $\bf{234}$ (2013) 149-191.

\bibitem[Kle]{amamm}  A. Kleshchev, Affine highest weight categories and affine quasihereditary algebras, Proc. Lond. Math. Soc. $\bf{110}$ (2015) 841-882.

\bibitem[KL]{amamm}  A. Kleshchev and J. Loubert, Affine cellularity of Khovanov-Lauda-Rouquier algebras of finite types, Int. Math. Res. Not. IMRN $\bf{14}$ (2015) 5659-5709.

\bibitem[Koe]{amamm}  S. Koenig, A panorama of diagram algebras, In: Trends in representation theory and related topics, EMS Ser. Congr. Rep., pages 491-540. Eur. Math. Soc., Z\"{u}rich, 2008.

\bibitem[KX1]{amamm}  S. Koenig and C. Xi, Cellular algebras: inflations and Morita equivalences, J. London Math. Soc. $\bf{60}$ (1999) 700-722.

\bibitem[KX2]{amamm}  S. Koenig and C. Xi, Affine cellular algebras, Adv. Math. $\bf{229}$ (2012) 139-182.

\bibitem[L1]{amamm} G. Lusztig, The two-sided cells of the affine Weyl group of type $\widetilde{A}_{n}$, Infinite-dimensional groups with applications (Berkeley, Calif., 1984), Math. Sci. Res. Inst. Publ., vol. $\bf{4}$, Springer, New York, 1985, 275-283.

\bibitem[L2]{amamm} G. Lusztig, Cells in affine Weyl groups, in: Algebraic groups and related topics (Kyoto/Nagoya, 1983), Adv. Stud. Pure Math., vol. $\bf{6}$, North-Holland, Amsterdam, 1985, 255-287.

\bibitem[L3]{amamm} G. Lusztig, Cells in affine Weyl groups. II, J. Algebra $\bf{109}$ (1987) 536-548.

\bibitem[L4]{amamm} G. Lusztig, Quantum groups at $v=\infty$, Functional analysis on the eve of the 21st
century, Vol. 1 (New Brunswick, NJ, 1993), Progr. Math., vol. $\bf{131}$, Birkh\"{a}user Boston, Boston, MA, 1995, pp. 199-221.

\bibitem[L5]{amamm} G. Lusztig, Hecke algebras with unequal parameters, CRM Monograph Series, $\bf{18}$. American Mathematical Society, Providence, RI, 2003, vi+136 pp.

\bibitem[L6]{amamm} G. Lusztig, Character sheaves on disconnected groups. VII, Represent. Theory (electronic) $\bf{9}$ (2005) 209-266.

\bibitem[N]{amamm}  H. Nakajima, Affine cellularity of quantum affine algebras: an appendix to ``Affine cellularity of BLN-algebras" by Weideng Cui, J. Algebra $\bf{441}$ (2015) 601-608.

\bibitem[PA]{PA}  L. Poulain d'Andecy, Invariants for links from classical and affine Yokonuma-Hecke algebras, arXiv:1602.05429.

\bibitem[Sh]{amamm}  J.Y. Shi, The partial order on two-sided cells of certain affine Weyl groups, J. Algebra $\bf{179}$ (1996) 607-621.

\bibitem[Xi1]{amamm}  N. Xi, The based ring of two-sided cells of affine Weyl groups of type $\widetilde{A}_{n-1}$, Mem. Amer. Math. Soc. $\bf{157}$ (2002) no. 749, xiv+95 pp.

\bibitem[Xi2]{amamm}  N. Xi, Representations of affine Hecke algebras and based rings of affine Weyl groups. J. Amer. Math. Soc. $\bf{20}$ (2007) 211-217.

\bibitem[Yo]{amamm}  T. Yokonuma, Sur la structure des anneaux de Hecke d'un groupe de Chevalley fini, C. R. Acad. Sci. Paris Ser. A-B $\bf{264}$ (1967) 344-347.




\end{thebibliography}
\end{document}